\documentclass[a4paper,10pt]{amsart}
\relpenalty=9999
\binoppenalty=9999
\usepackage{standalone}
\usepackage{amsfonts}
\usepackage{amsthm}
\usepackage{amssymb}
\usepackage{amsmath}
\usepackage{theoremref}
\usepackage{enumerate}
\usepackage{bbm}
\usepackage{bm}
\usepackage{pgfplots}
\pgfplotsset{compat=1.18} 
\usepgfplotslibrary{fillbetween}
\usetikzlibrary{intersections}
\usepackage{mathtools}
\usetikzlibrary{patterns}

\usepackage{geometry}
\usepackage{a4wide}

\numberwithin{equation}{section}

\renewcommand{\L}{\mathcal{L}}

\newcommand{\res}{\text{res}}

\newcommand{\T}{\mathbb{T}}

\newcommand{\conj}[1]{\overline{#1}}
\newcommand{\D}{\mathbb{D}}

\newcommand{\R}{\mathbb{R}}
\renewcommand{\H}{\mathbb{H}}
\newcommand{\Po}{\mathcal{P}}

\newcommand{\dist}[2]{\text{dist}( #1, #2 ) }

\newcommand{\m}{\textit{m}}
\newcommand{\hil}{\mathcal{H}}

\newcommand{\hb}{\mathcal{H}(b)}

\renewcommand\Re{\operatorname{Re}}
\renewcommand\Im{\operatorname{Im}}

\newtheorem{mainthm}{Theorem}

\newtheorem{thm}{Theorem}[section]
\newtheorem*{thm*}{Theorem}
\newtheorem{lem}[thm]{Lemma}

\newtheorem{cor}[thm]{Corollary}
\newtheorem*{cor*}{Corollary}
\newtheorem{prop}[thm]{Proposition}
\theoremstyle{definition}

\theoremstyle{definition}
\newtheorem{remark}[thm]{Remark}

\newtheorem{claim*}{Claim}

\title[Spectral clumping for functions decreasing rapidly on a half-line]{Spectral clumping for functions and distributions decreasing rapidly on a half-line}
\author{Bartosz Malman}

\address{Division of Mathematics and Physics, 
        Mälardalen University,
		Västerås, Sweden}
\email{bartosz.malman@mdu.se} 

\begin{document}

\begin{abstract} We demonstrate a phenomenon of condensation of the Fourier transform $\widehat{f}$ of a function $f$ defined on the real line $\R$ which decreases rapidly on one half of the line. For instance, we prove that if $f$ is square-integrable on $\R$, then a one-sided estimate of the form \[\rho_f(x) := \int_x^{\infty} |f(t)| \,dt = \mathcal{O}\big(e^{-c\sqrt{x}} \big), \quad x > 0\] for some $c > 0$, forces the non-zero frequencies $\sigma(f) := \{ \zeta \in \R : |\widehat{f}(\zeta)| > 0 \}$ to clump: this set differs from an open set $U$ only by a set of Lebesgue measure zero, and $\log |\widehat{f}|$ is locally integrable on $U$. In particular, if $f$ is non-zero, then there exists an interval on which $\log |\widehat{f}|$ is integrable. The roles of $f$ and $\widehat{f}$ above may be interchanged, and the result extends also to a large class of tempered distributions. We show that the above decay condition is close to optimal, in the following sense: a non-zero entire function $f$ exists which is square-integrable on $\R$, for which $\sigma(f)$ is a subset of a compact set $E$ containing no intervals, and for which the estimate $\rho_f(x) = \mathcal{O}\big( e^{-x^a}\big)$, $x > 0$, holds for every $a \in (0, 1/2)$.
\end{abstract}

\maketitle

\section{Introduction}

\subsection{Fourier transform, its support and size}

This note studies a certain manifestation of the \textit{uncertainty principle in Fourier analysis}, where a smallness condition on a function $f$ forces its Fourier transform $\widehat{f}$ to be, in some sense, large. Vice versa, smallness of $\widehat{f}$ forces $f$ to be large. In our context, the smallness is defined in terms of a one-sided decay condition, and the largeness in terms of the existence of a \textit{clump}. This will be our moniker for an interval on which the function has an integrable logarithm. We emphasize that our results concern functions with a spectrum which might vanish on an interval (commonly referred to as functions with a \textit{spectral gap}), but for which the spectrum should be large on some other interval. 

We will use the following definition of the transform:
\begin{equation}
    \label{FourierTransformDef}
    \widehat{f}(\zeta) := \int_\R f(x) e^{-ix\zeta}\,d\lambda(x), \quad \zeta \in \R.
\end{equation} Here $d\lambda(x) = dx/\sqrt{2\pi}$ is a normalization of the Lebesgue measure $dx$ on $\R$. Then, the inversion formula is given by
\begin{equation}
    \label{InverseFourierTransformDef}
    f(x) := \int_\R \widehat{f}(\zeta) e^{i\zeta x }\,d\lambda(\zeta), \quad x \in \R.
\end{equation} 
For $p > 0$, let $\L^p(\R, dx)$ be the usual Lebesgue space of functions $f$ for which $|f|^p$ is integrable with respect to $dx$. The formula \eqref{FourierTransformDef} can be interpreted literally only for $f \in \L^1(\R, dx)$. It is interpreted in terms of Plancherel's theorem in the case $f \in \L^2(\R, dx)$, and in order to state our most general results we will later need to interpret the transform in the sense of distribution theory. The \textit{spectrum} $\sigma(f)$ of a function $f$ is the subset of $\R$ on which $\widehat{f}$ lives. Since $\widehat{f} \in \L^2(\R, dx)$ is defined only up to a set of Lebesgue measure zero, so is the spectrum $\sigma(f)$ in this case. If we accept making errors of measure zero (which we will), we may define the spectrum as \[ \sigma(f) := \{ \zeta \in \R : |\widehat{f}(\zeta)| > 0 \}, \quad f \in \L^1(\R, dx) \cup \L^2(\R, dx).\] Note specifically that our definition of $\sigma(f)$ might not coincide with the usual notion of closed \textit{support} of the distribution $\widehat{f}$.

The uncertainty principle in Fourier analysis presents itself in plenty of ways, and the excellent monograph \cite{havinbook} of Havin and Jöricke describes many of its most interesting interpretations. One of them is the following statement, well-known to function theorists. If $f \in \L^2(\R, dx)$ is non-zero and $\R_-$ is the negative half-axis, then we have the implication
\begin{equation}
    \label{JensenIneq} f(x) \equiv 0 \text{ on } \R_- \quad \Rightarrow \quad \int_{\R} \frac{\log |\widehat{f}(\zeta)|}{1+\zeta^2} \, d\zeta  > -\infty.
\end{equation} Here the extreme decay (indeed, vanishing) of $f$ on a half-axis implies global integrability of $\log |\widehat{f}|$ against the Poisson measure $d\zeta / (1+\zeta^2)$. A fortiori, $\log |\widehat{f}|$ is integrable on every interval $I$ of $\R$. Naturally, this is not typical. By Plancherel's theorem, every function in $\L^2(\R, dx)$ is the Fourier transform of some other function in the same space. So on the other extreme, plenty of functions $f \in \L^2(\R, dx)$ have a Fourier transform which lives on sparse sets containing no intervals. This forces the divergence of the logarithmic integral of $\widehat{f}$ over any interval. In other words, plenty of functions admit no spectral clumps. The results of this note give conditions under which such clumps form.

\subsection{Condensation and sparseness of spectra and supports} We shall introduce our results at first in the context of the Hilbert space $\L^2(\R, dx)$. Here we can prove a claim which symmetric in $f$ and $\widehat{f}$, and also we can argue for near-optimality of the result. This is the content of \thref{CondensationTheorem} and \thref{SparsenessTheorem}. The more general distributional clumping result is presented in \thref{DistributionalClumpingTheorem}.

\begin{mainthm} \thlabel{CondensationTheorem} If $f \in \L^2(\R, dx)$ satisfies the estimate 
\begin{equation}
\label{rhofDecayCondThm}\rho_f(x) := \int_x^\infty |f(t)| \,dt = \mathcal{O}\big( e^{-c\sqrt{x}} \big), \quad x > 0 \end{equation} for some constant $c > 0$, then there exists an open set $U$ which coincides with $\sigma(f)$ up to a set of Lebesgue measure zero, and for every $x \in U$ there exists an interval $I$ containing $x$ such that \[ \int_I \log |\widehat{f}(t)| \, dt > -\infty. \]
\end{mainthm}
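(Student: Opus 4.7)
My plan is threefold: reduce the theorem to a local clumping claim via a Fourier translation, decompose $\widehat{f}$ into half-plane-analytic pieces, and use the subexponential decay of $\rho_f$ to control the cancellation between those pieces on $\sigma(f)$. For the reduction, replacing $f(t)$ by $e^{-i\zeta_0 t}f(t)$ translates $\widehat{f}$ by $\zeta_0$ without changing $\rho_f$, so it suffices to prove the local statement: whenever $0 \in \sigma(f)$, there is an interval $I \ni 0$ with $\int_I \log|\widehat f(\zeta)|\,d\zeta > -\infty$. The desired open set $U$ is then the union of these intervals as $\zeta_0$ ranges over $\sigma(f)$; local log-integrability on $I$ forces $\widehat{f} \neq 0$ a.e.\ on $I$, so $U$ automatically coincides with $\sigma(f)$ up to a null set.

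Write $f = f_+ + f_-$ with $f_\pm = f\chi_{\R_\pm}$. The decay condition implies $f_+ \in \L^1(\R, dx)$, so
\[
G_+(z) := \int_0^{\infty} f(t)\,e^{-itz}\,d\lambda(t)
\]
is a bounded holomorphic function on the closed lower half-plane with boundary values $\widehat{f_+}$ on $\R$, while $G_- := \widehat{f_-}$ is the boundary value of a function in $H^2$ of the upper half-plane; on $\R$ one has $\widehat f = G_+ + G_-$. The half-plane Szegő theorem furnishes local integrability of $\log|G_\pm|$ on $\R$ whenever the corresponding piece is nontrivial, and the degenerate cases $G_\pm \equiv 0$ correspond to $f$ being supported on a single half-axis and are handled directly by \eqref{JensenIneq}. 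The real task is therefore to rule out the possibility that $G_+ + G_-$ cancels so severely near $0$ that $\log|\widehat f|$ becomes non-integrable there.

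To accomplish this I would exploit the $\sqrt{x}$-decay. An integration by parts shows $\int_0^\infty |f(t)|\, e^{c'\sqrt t}\,dt < \infty$ for every $c' < c$, and I would use this weighted integrability together with the $H^\infty$-structure of $G_+$ to construct an explicit subharmonic majorant for $-\log|G_+|$ valid not only on $\R$ but also in a thin lens-shaped domain obtained by attaching a cap in the upper half-plane to the lower half-plane through a small interval $I \ni 0$. Combined with the analytic extension of $G_-$ into the upper half-plane and a Jensen-type formula applied in the lens, this majorant should yield a uniform lower bound for $\int_I \log|G_+ + G_-|\,d\zeta = \int_I \log|\widehat f|\,d\zeta$, producing the desired local clump.

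The main obstacle lies in this third step. The $\sqrt{x}$-rate sits precisely on the boundary of Denjoy--Carleman quasianalyticity: the induced Gevrey-2 smoothness of $G_+$ on $\R$ is \emph{not} quasianalytic, so smoothness of $G_+$ alone cannot supply the required rigidity. The decisive ingredient must couple the subexponential weighted integrability of $f_+$ with the $H^\infty$-rigidity of $G_+$ in the lower half-plane; identifying the correct lens domain and subharmonic majorant for the ensuing Phragmén--Lindelöf-type comparison is where the technical work is concentrated.
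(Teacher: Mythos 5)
Your proposal takes a genuinely different route from the paper's, but the central step is missing, and the part of it that is sketched appears not to be well-posed.

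The paper does not work directly with $\widehat{f}$ at all. It introduces a weight $w = \min(|f|^2,1)$ and a decreasing weight $\rho$ with $\rho(\zeta) = \mathcal{O}(e^{-d\sqrt{\zeta}})$, forms the product space $\L^2(\R, w\,dx) \oplus \L^2(\R_+, \rho\,dx)$, and studies the closure $\hil(w,\rho)$ of the graph $\{(g,\widehat{g}) : g \in \hil^1(\R)\cap\hil^2(\R)\}$ inside it. The key technical construction is a sequence $h_n \in \hil^\infty(\R)$ built from rapidly oscillating outer functions, with Poisson integrals controlled by the lemma $|\Po_\mu(x+iy)| \leq C/(\pi y)$ for measures that balance to zero on fine partitions. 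Condition (i) of \thref{hnSplittingSequence} gives $|h_n(x+iy)| \leq e^{c/y}$, which by \thref{FourierDecayFromExtensionGrowthCorollary} gives the spectral growth bound $e^{2\sqrt{c}\sqrt{\zeta}}$ — and this is precisely where the exponent $1/2$ enters. Combined with \thref{UsInvSubspacesProp} on invariant subspaces of the multiplication semigroup $U^s$, the argument shows that $\L^2(\res(w), w\,dx) \oplus \{0\}$ sits inside $\hil(w,\rho)$, and the dualization in Section 4 (Plancherel plus \thref{ConvolutionFourierDecayLemma}) then forces $\res(f)$ to have measure zero. Your observation about the Gevrey-2 non-quasianalyticity of $G_+$ is on the mark: it explains why an argument based only on smoothness of $G_+$ must fail, and why the paper is forced into this more structural, functional-analytic route.

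The concrete problem with your step 3: the lens-shaped domain does not exist for the object you need. You write $\widehat{f} = G_+ + G_-$ with $G_+$ bounded holomorphic in the \emph{lower} half-plane (because $f_+\in\L^1$, $f_+$ supported on $\R_+$) and $G_- \in \hil^2$ of the \emph{upper} half-plane. The hypothesis $\rho_f(x) = \mathcal{O}(e^{-c\sqrt{x}})$ does not give $\int_0^\infty |f(t)| e^{bt}\,dt < \infty$ for any $b > 0$, so $G_+$ has no analytic continuation at all into the upper half-plane — not even into a thin cap. Thus $G_+ + G_-$ is defined only on $\R$, and there is no two-dimensional domain with $I$ on its boundary on which a subharmonic or Phragmén–Lindelöf comparison for $\log|G_+ + G_-|$ can take place. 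The degenerate case remark is also slightly off: $G_+\equiv 0$ already gives global log-integrability via \thref{HadyClassLogIntProp}, there is nothing special to invoke. Finally, the reduction ``whenever $0\in\sigma(f)$'' is not a well-posed pointwise condition for an $\L^2$ equivalence class; the claim needs to be stated for almost every point of $\sigma(f)$, and then the translation trick cannot select a representative point. None of these are fatal conceptual errors in isolation, but the third step — which you yourself flag as the locus of the technical work — is not an argument, and the domain on which you intend to run it cannot be constructed. Interestingly, a tent/sawtooth domain very much like the one you gesture at does appear in this paper, but for the \emph{converse} direction: it is the domain $\Omega$ in the proof of \thref{FourierCollapseLemma}, used to build the counterexample of \thref{SparsenessTheorem} via Khrushchev's harmonic-measure estimate, not to prove clumping.
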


In other words, the one-sided decay condition \eqref{rhofDecayCondThm} implies that $\widehat{f}$ lives on the union of the spectral clumps of $f$. Since the Fourier transform is a unitary operation on $\L^2(\R, dx)$, the roles of $f$ and $\widehat{f}$ may obviously be interchanged in the statement of \thref{CondensationTheorem}. Thus a one-sided spectral decay condition of $f$ implies local integrability properties of $\log |f|$ on the set where $f$ lives. In this form, the result encourages us to extend it to tempered distributions. We shall do so in a moment. 

The integrand in \eqref{rhofDecayCondThm} may seem a bit unnatural in the context of square-integrable functions $f$. It is more natural in the context of functions of tempered growth appearing in \thref{DistributionalClumpingTheorem}. Anyhow, we note that one can prove that an estimate of the form $\int_x^\infty |f(t)|^2 \, dt = \mathcal{O}\big( e^{-c\sqrt{x}} \big)$ in fact implies \eqref{rhofDecayCondThm} for some slightly smaller $c$.

We can prove also that the condition \eqref{rhofDecayCondThm} on the decay of $\rho_f$ appearing in \thref{CondensationTheorem} is close to optimal. We do so by exhibiting a non-zero function with rapid one-sided decay but sparse spectrum. 

\begin{mainthm} \thlabel{SparsenessTheorem}
For every $b > 0$, there exists a compact set $E \subset \R$ contained in $[0, b]$ which contains no intervals, and a non-zero entire function $f \in \L^2(\R, dx)$ which satisfies \[ \rho_f(x) = \mathcal{O}\big( e^{-x^a} \big), \quad x > 0 \] for every $a \in (0, 1/2)$, and such that $\sigma(f)$ is contained within $E$.
\end{mainthm}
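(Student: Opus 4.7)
The plan is to construct $f$ explicitly. First I would fix $E \subset [0,b]$ to be a fat Cantor set, i.e., a compact set of positive Lebesgue measure with empty interior, obtained by iteratively removing central open subintervals from $[0,b]$ whose lengths sum to less than $b$. Next I would fix a weight $\omega \colon \R \to [0,\infty)$ satisfying $\omega(x) = x^{1/2}/\log(e+x)$ for $x \geq 0$ and $\omega(x) = 2\log(1+|x|)$ for $x<0$; then $\omega(x)/x^a \to \infty$ as $x \to +\infty$ for every $a \in (0,1/2)$, yet $\int_\R \omega(x)(1+x^2)^{-1}\,dx < \infty$.

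By the Beurling-Malliavin multiplier theorem, the latter integrability condition produces, for any prescribed exponential type $\leq b$, a non-zero entire function $F \in \L^2(\R, dx)$ with $|F(x)| \leq Ce^{-\omega(x)}$ throughout $\R$. Multiplying by $e^{ibz/2}$ produces $f_0(z) = e^{ibz/2}F(z)$, which is entire of exponential type $\leq b$, lies in $\L^2(\R, dx)$, has $\sigma(f_0) \subset [0,b]$, and satisfies $|f_0(x)| = |F(x)| \leq Ce^{-\omega(x)}$ on $\R$; this already yields $\rho_{f_0}(x) = O(e^{-x^a})$ for every $a \in (0,1/2)$. The remaining task is to upgrade $f_0$ to a function $f$ whose spectrum lies inside $E$. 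For this I would work in the weighted Hilbert space $X$ of functions $g \in \L^2(\R,dx)$ with $\sigma(g) \subset [0,b]$ and $\int_{0}^\infty |g(x)|^2 e^{2\omega(x)}\,dx < \infty$, consider the closed subspace $\K_E \subset X$ consisting of $g$ whose Fourier transforms vanish a.e.\ on $[0,b]\setminus E$, and argue via a duality / annihilator argument (rooted in the Beurling-Malliavin flexibility demonstrated in the previous step) that $\K_E$ is non-trivial. Concretely, I would attempt to produce $\widehat{f}$ as an $\L^2$-limit of suitable mollifications of $\chi_E$ by multipliers designed to preserve boundedness in the weighted norm, resulting in a non-zero $f \in \K_E$ which inherits the one-sided decay of $f_0$.

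The principal obstacle is precisely the compatibility of the two constraints. Elements of $\K_E$ have Fourier transforms vanishing on the dense open set $[0,b]\setminus E$ and hence cannot be continuous, while rapid decay of $f$ on $\R_+$ is classically tied to smoothness of $\widehat{f}$. The resolution rests on the fact that this construction operates exactly at the critical threshold delineated by \thref{CondensationTheorem}: any one-sided decay of the strictly stronger form $\rho_f(x) = O(e^{-c\sqrt{x}})$ would force a spectral clump of $\widehat f$ on some interval, contradicting $\sigma(f) \subset E$. Hence the weight $\omega$ must dominate every $x \mapsto x^a$ with $a < 1/2$ while itself being dominated by any $c\sqrt{x}$, and the slow logarithmic correction $\omega(x) = x^{1/2}/\log(e+x)$ is placed precisely in this narrow admissible window, which is what makes the simultaneous decay-and-sparsity construction possible at all.
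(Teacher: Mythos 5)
The proposal correctly identifies the high-level shape of the argument (a duality/annihilator argument in a weighted Hilbert space, with a weight such as $\sqrt{x}/\log x$ placed between all $x^a$, $a<1/2$, and every $c\sqrt{x}$), but it has a genuine gap at precisely the step that carries the entire theorem. Two specific problems:

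First, the compact set $E$ cannot be chosen independently of the weight. You fix a ``fat Cantor set'' once and for all and then pick $\omega$; in the paper the Cantor construction is coupled to the weight: writing $M_*(y) = M\big((M')^{-1}(y)\big)$ and $H(y) = M_*(y) - \log y + \mathrm{const}$, the gap lengths $L_n$ of $E$ must be chosen so that $\sum_n 2^n \int_0^{L_n} H(x)\,dx < \infty$. This summability is exactly what feeds Khrushchev's harmonic-measure estimate; an arbitrary fat Cantor set need not satisfy it for a given $M$.

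Second, and more seriously, the assertion that $\K_E$ is nontrivial is exactly the content of the theorem, and the proposal provides no argument for it. The Beurling--Malliavin step produces $f_0$ with spectrum in the \emph{interval} $[0,b]$ and never reappears; BM says nothing about confining a spectrum to a nowhere-dense set while keeping one-sided decay. The suggested ``$\L^2$-limit of mollifications of $\chi_E$'' does not make sense: any mollification of $\chi_E$ is supported in a \emph{neighborhood} of $E$, not inside $E$, and would have a continuous Fourier transform -- the wrong object. What the paper actually does is show that, for the product space $\L^2(E,dx) \oplus \L^2(\R_+,\rho\,dx)$ with $\rho = e^{-M}$, the Hardy subspace spanned by tuples $(f,\widehat{f})$ for $f$ in the Hardy class is \emph{proper}. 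The key technical ingredient is Khrushchev's estimate: the domain $\Omega$ obtained by deleting triangular tents over the complementary intervals of $E$ from a rectangle has harmonic measure $\omega_z$ satisfying $\int_{\partial\Omega\cap\H} H(\Im t)\,d\omega_z(t) < \infty$, and combining this with the growth bound $|f(x+iy)| \lesssim e^{M_*(y)}/y$ (coming from the spectral decay assumption via a Legendre-type estimate of $\int_0^\infty e^{M(\zeta)-2y\zeta}d\zeta$) forces uniform-on-compacta collapse $f_n \to 0$ on $\H$ for any sequence with $f_n \to 0$ on $E$ and bounded weighted spectral norms. None of this is present in the proposal, and the closing paragraph on ``operating at the critical threshold'' restates the difficulty rather than resolving it.

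In short, the proposal sets up a reasonable frame but skips the crux: coupling the Cantor set to the weight, and the Khrushchev harmonic-measure estimate that proves the Hardy subspace is proper.
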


After an initial reduction, the proof of this result follows ideas of Khrushchev from \cite{khrushchev1978problem}. Note that the function $f$ appearing in \thref{SparsenessTheorem} is entire by the virtue of having a spectrum $\sigma(f)$ of compact support. More importantly, the condition on $E$ implies that $I \setminus E$ has positive Lebesgue measure for every interval $I$, so we obtain \[ \int_I \log |\widehat{f}(t)| \, dt = -\infty\] for every interval $I \subset \R$. This is in contrast to the conclusion of \thref{CondensationTheorem}. It follows that the exponent $a = 1/2$ in estimates of the form $\rho_f(x) = \mathcal{O}\big( e^{-x^a} \big)$ is critical for the spectral clumping phenomenon.

As mentioned above, clumping statements makes sense for objects in a class much wider than $\L^2(\R, dx)$. Here is our distrbutional result.

\begin{mainthm} \thlabel{DistributionalClumpingTheorem} Let $f$ be a tempered distribution on $\R$ which is a measurable function satisfying \[ \int_{\R} \frac{|f(x)|}{(1+|x|)^n} \, dx < \infty\] for some $n > 0$. If the distributional Fourier transform $\widehat{f}$ is an integrable function on some interval $[A, \infty)$, and the estimate $\rho_{\widehat{f}}(\zeta) = \mathcal{O}\big( e^{-c\sqrt{\zeta}}\big)$ holds for all sufficiently large positive $\zeta$, then there exists an open set $U$ such that $f$ vanishes almost everywhere outside of $U$, and for each $x \in U$ there exists an interval $I$ containing $x$ satisfying \begin{equation}
    \label{fClumpDist} \int_I \log |f(t)| \, dt > -\infty.
\end{equation} 
\end{mainthm}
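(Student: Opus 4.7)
The strategy is to reduce to Theorem~\thref{CondensationTheorem} by localizing $f$ against a Schwartz multiplier whose Fourier transform is supported in the negative half-line, while regularizing $f$ by mollification. Fix a Lebesgue density point $x_0$ of $\{f \neq 0\}$; the aim is to produce an interval $I \ni x_0$ with $\int_I \log|f|\,dt > -\infty$. Pick $\theta \in C_c^\infty((-1,0))$ nonnegative with $\int \theta > 0$, and set
\[\Psi(x) := \int \theta(\zeta)\, e^{i(x - x_0)\zeta}\,d\lambda(\zeta).\]
Then $\Psi \in \mathcal{S}(\R)$, $\widehat\Psi(\zeta) = e^{-ix_0\zeta}\theta(\zeta) \in C_c^\infty([-1,0])$, and $\Psi(x_0) > 0$; by continuity, $\Psi$ is nonvanishing on a neighborhood $J \ni x_0$ on which $\log|\Psi|$ is bounded. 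Let $\eta_\epsilon$ be a standard approximate identity in $C_c^\infty(\R)$ and set $\tilde f_\epsilon := f * \eta_\epsilon \in C^\infty$, which has polynomial growth of order $(1+|x|)^n$ by the weighted integrability hypothesis on $f$. The product $g_\epsilon := \Psi \tilde f_\epsilon$ is Schwartz, hence in $L^2(\R)$, and a direct computation gives
\[ \widehat{g_\epsilon}(\zeta) = \int_{-1}^0 \widehat\Psi(u)\,\widehat f(\zeta - u)\,\widehat{\eta_\epsilon}(\zeta - u)\,du.\]
Because $\widehat\Psi$ is supported in $[-1,0]$, for $\zeta > A + 1$ the integrand only involves $\widehat f$ at points of $[\zeta, \zeta + 1] \subset [A+1, \infty)$, where $\widehat f$ is honestly $L^1$. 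The estimate $\rho_{\widehat{g_\epsilon}}(x) \leq \|\widehat\Psi\|_\infty \|\widehat\eta\|_\infty \,\rho_{\widehat f}(x) = \mathcal{O}(e^{-c\sqrt x})$ then holds uniformly in small $\epsilon$.

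Apply Theorem~\thref{CondensationTheorem} --- in its symmetric form with the roles of $f$ and $\widehat f$ interchanged, valid by unitarity of the Fourier transform on $L^2$ --- to each $g_\epsilon$. Since $g_\epsilon(x_0) = \Psi(x_0)\tilde f_\epsilon(x_0) \to \Psi(x_0)f(x_0) \neq 0$ as $\epsilon \to 0$, the point $x_0$ lies in the open clumping set of $g_\epsilon$ for small $\epsilon$, giving an interval around $x_0$ on which $\int \log|g_\epsilon|\,dt > -\infty$. To pass to the limit, one uses that $g_\epsilon \to \Psi f$ almost everywhere: provided the clumping estimate $\int_I \log|g_\epsilon|\,dt \geq -M$ holds on a fixed interval $I \subset J$ around $x_0$ with $M$ uniform in $\epsilon$, Fatou's lemma applied to $\log^-|g_\epsilon|$, together with an upper bound on $\log^+|g_\epsilon|$ coming from the pointwise control of $\tilde f_\epsilon$, yields $\int_I \log|\Psi f|\,dt > -\infty$. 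The identity $\log|f| = \log|\Psi f| - \log|\Psi|$ on $J$, with $\log|\Psi|$ bounded, then transfers the clump to $f$. Defining $U := \bigcup \{I \text{ open interval}: \int_I \log|f|\,dt > -\infty\}$ gives an open set that contains every Lebesgue density point of $\{f \neq 0\}$, so $f$ vanishes almost everywhere outside $U$.

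The principal technical obstacle is securing the uniform quantitative clumping bound: the constants arising from Theorem~\thref{CondensationTheorem} applied to $g_\epsilon$ must depend only on the decay rate $c$ and on quantities that are uniformly controlled in $\epsilon$ (the weighted $L^1$-norm of $f$, for instance), rather than on blowing-up pointwise norms of the mollified $\tilde f_\epsilon$. This likely requires inspection of the Jensen/Poisson-type inputs to the proof of Theorem~\thref{CondensationTheorem}; alternatively, one may adapt Theorem~\thref{CondensationTheorem} to an $L^1$ setting and apply it directly to $\Psi f \in L^1$, bypassing the mollification step entirely.
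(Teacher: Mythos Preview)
Your argument has a genuine gap, and you correctly locate it yourself: the passage to the limit $\epsilon \to 0$. \thref{CondensationTheorem} is purely qualitative; it gives no lower bound on $\int_I \log|g_\epsilon|$ in terms of the spectral decay constant or any norm of $g_\epsilon$, and the clumping interval it produces around $x_0$ depends on $g_\epsilon$ and hence on $\epsilon$. Without a quantitative form of \thref{CondensationTheorem} there is no mechanism to pin down a fixed interval $I$ and a uniform bound $\int_I \log|g_\epsilon| \geq -M$, so your Fatou step is unjustified. Your fallback suggestion, to extend \thref{CondensationTheorem} to an $\L^1$ setting and apply it to $\Psi f$ directly, is the right instinct, but note that $\Psi f$ is in $\L^1$ yet need not be in $\L^2$ (nothing prevents $f$ from having local singularities like $|x|^{-1/2}$), so this would require genuinely new work that you have not supplied.

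The paper avoids the limit altogether by building a single multiplier $m$ with $mf \in \L^2(\R,dx)$ from the start. The device you are missing is an \emph{outer function}: one constructs $h \in \hil^\infty(\R)$ via the formula \eqref{hOuterDef} with boundary modulus $|h(x)| = \min(1, |f(x)|^{-1})$, so that $|hf| \leq 1$ pointwise. Combined with a rational factor $\Phi(x) = C(x-i)^{-n}$ for integrability and a further $(x+i)^{-2}$, the multiplier $m := \Phi \cdot \overline{h/(x+i)^2}$ makes $mf$ bounded and integrable, hence in $\L^2$. Crucially, $h/(x+i)^2 \in \hil^1(\R)$, so \thref{HadyClassLogIntProp} guarantees $\int_I \log|m| > -\infty$ on every interval $I$, and the clump for $mf$ furnished by \thref{CondensationTheorem} transfers to $f$ via $\log|f| = \log|mf| - \log|m|$. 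The spectral decay $\rho_{\widehat{mf}}(\zeta) = \mathcal{O}(e^{-c'\sqrt{\zeta}})$ follows from a convolution estimate of the same flavor as yours, using that $\widehat{\Phi}$ and $\widehat{\overline{h_*}}$ are supported on $\R_-$. No mollification, no $\epsilon$, no uniformity problem.
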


For instance, the result shows that a function $f \in \L^1(\R, dx)$ which lives on a sparse set containing no intervals cannot satisfy even a one-sided spectral decay condition of the form $\rho_{\widehat{f}}(\zeta) \lesssim e^{-c\sqrt{\zeta}}$. Note also that in this extended form, our result includes the trivial but important examples such as $f = 1$ and $\widehat{f} = \delta_0$ (Dirac delta), the trigonometric functions and the polynomials. 

\subsection{A converse result} The Beurling-Malliavin theory implies a partial converse result. If $f$ is a locally integrable function on $\R$ which has a clump $I$ as in \eqref{fClumpDist}, and a constant $c > 0$ is given, then a bounded multiplier $m$ exists for which $mf$ has a Fourier transform satisfying $\rho_{\widehat{mf}}(\zeta) = \mathcal{O}\big(e^{-c\sqrt{\zeta}}\big)$ for $\zeta > 0$. To see this, recall that a smooth function $g$ supported in $I$ exists which satisfies the bilateral spectral decay $|\widehat{g}(\zeta)| \leq e^{-c\sqrt{|\zeta|}}$, $\zeta \in \R$ (this simpler version of the famous Beurling-Malliavin theorem is proved in \cite[p. 276-277]{havinbook}, and in fact we may ensure an even faster bilateral spectral decay of $g$). There exists also a bounded function $h \in \L^1(\R, dx)$ which satisfies $\sigma(h) \subset (0, \infty)$ and $|h(x)| = \min( |f(x)|, 1)$ on $I$ (we use the assumption that $I$ is a clump for $f$ and construct $h$ as in \eqref{hOuterDef} below). Then an argument similar to the one used in the proof of \thref{ConvolutionFourierDecayLemma} below shows that the function $\conj{h}g$ will satisfy the desired one-sided spectral decay, and clearly $\conj{h}g = mf$ for some bounded function $m$ supported in $I$.

\subsection{Clumping in other parts of analysis} The motivation for the research presented in this note was a desire to produce a self-contained exposition of the clumping phenomenon which was observed in two other contexts, both somewhat more esoteric than Fourier analysis on the real line. 

The first of these is a polynomial approximation problem in the unit disk $\D := \{ z \in \mathbb{C} : |z| < 1\}$. Here we are presented with a measure \[d\mu = G(1-|z|)dA(z) + w(z) d\m(z),\] where $dA$ and $d\m$ are the area and arc-length measures on $\D$ and $\T := \partial \D = \{ z \in \mathbb{C} : |z| = 1\}$. The functions $G$ and $w$ are non-negative weights, and one would like to understand under which conditions \textit{splitting} occurs. Namely, when is the weighted space $\L^2(\T, w \, d\m)$ contained in the closure of analytic polynomials in the $\L^2$-norm induced by the measure $\mu$? In the case that $G(1-|z|)$ decays exponentially as $|z| \to 1^-$, the necessary and sufficient condition is that $w$ has no clumps, or in other words that the integral of $\log w$ diverges over any arc on $\T$. The lack-of-clumping condition was conjectured by Kriete and MacCluer in \cite{kriete1990mean} and confirmed in \cite{malman2023revisiting}. Some of the techniques used in the proofs of the results in the present note are adaptations of the ideas from \cite{malman2023revisiting}.

The other context is a circle of ideas surrounding the \textit{Aleksandrov-Clark measures} appearing in spectral theory, and spaces $\hb$ defined by de Branges and Rovnyak, well-known to operator theorists. To any positive finite Borel measure $\nu$ on $\T$ we may associate a so-called \textit{Clark operator} $\mathcal{C}_\nu$ which takes a function $g \in \L^2(\T, d\nu)$ to the analytic function in $\D$ given by the formula \[\mathcal{C}_\nu g (z) := \frac{\int_\T \frac{g(x)}{1-\conj{x}z}d\nu(x)}{\int_\T \frac{1}{1-\conj{x}z} d\nu(x)}, \quad z \in \D.\] The operator $\mathcal{C}_\nu$ maps $\L^2(\T, d\nu)$ onto a space of analytic functions denoted by $\hb$, the symbol function $b: \D \to \D$ itself being related to $\nu$ by the formula \[\frac{1}{1-b(z)} = \int_\T \frac{1}{1-\conj{x}z} d\nu(x), \quad z \in \D\] in the case that $\nu$ is a probability measure, with a similar formula in the general case. For many choices of $\nu$ (or equivalently, choices of $b$), the space $\hb$ is somewhat mysterious, with the distinctive feature of containing very few functions extending analytically to a disk larger than $\D$. This extension property is characterized by the exponential decay of the Taylor series of the function, and the clumping of the absolutely continuous part of $\nu$ is decisive for existence and density of functions in $\hb$ which have a Taylor series decaying just a bit slower than exponentially. Results of this nature are contained in \cite{malman2023shift}. In fact, a Fourier series version of \thref{CondensationTheorem} is a consequence of the results in \cite{malman2023shift}.

\subsection{Other forms of the uncertainty principle} 
The implication \eqref{JensenIneq} has a well-known Fourier series version. If a function $f$ defined on the circle $\T := \{ z \in \mathbb{C} : |z| = 1\}$ is integrable with respect to arc-length $ds$ on $\T$, and the negative portion of the Fourier series of $f$ vanishes, then $\int_\T \log |f| ds > -\infty$, unless $f$ is the zero function.. Volberg derived the same conclusion from the weaker hypothesis of nearly-exponentially decaying negative portion of the Fourier series (see \cite{volberg1982logarithm} and the exposition in \cite{vol1987summability}). Work of Borichev and Volberg \cite{borichev1990uniqueness} contains related results.

The decay condition \eqref{rhofDecayCondThm} on $f \in \L^2(\R, dx)$ prohibits $\widehat{f}$ from living on a set $S$ containing no intervals. Somewhat related are uniqueness statements in which one seeks to give examples of pairs of sets $(E,S)$ for which the following implication is valid: if $f$ in a certain class lives on $E$ and $\widehat{f}$ lives on $S$, then $f \equiv 0$. One says that $(E,S)$ is then a \textit{uniqueness pair} for the corresponding class. A famous result of Benedicks presented in \cite{benedicks1985fourier} (see also \cite{amrein1977support}) says that $(E,S)$ is a uniqueness pair for integrable $f$ if both sets have finite Lebesgue measure, and the result holds not only for the real line $\R$ but also for the $d$-dimensional Euclidean space $\R^d$. Hedenmalm and Montes-Rodr\'iguez worked with the hyperbola $H = \{ (x,y) \in \R^2 : xy=1 \}$ and th class of finite Borel measures $\mu$ supported on $H$ which are absolutely continuous with respect to arclength on $H$. They proved in \cite{hedenmalm2011heisenberg} that if $\widehat{\mu}$ vanishes on certain types of discrete sets $\Lambda \subset \R^2$, then $\mu \equiv 0$, thus exhibiting interesting uniqueness pairs of the form $(H, \mathbb{\R}^2 \setminus \Lambda)$. Recent work of Radchenko and Viazovska on interpolation formulas for Schwartz functions in \cite{radchenko2019fourier} gives examples of pairs of discrete subsets $E$ and $S$ of $\R$ for which $(\R \setminus E, \R \setminus S)$ is a uniqueness pair for functions in the Schwartz class. Kulikov, Nazarov and Sodin exhibit similar interpolation formulas, and consequently new uniqueness pairs, in their recent work in \cite{kulikov2023fourier}.

\subsection{Notation} For a set $E \subset \R$ and a measure $\mu$ defined on $\R$, the space $\L^p(E, d\mu)$ denotes the usual Lebesgue space consisting of equivalence classes of functions living only on $E$ and satisfying the integrability condition $\int_E |f(x)|^p d\mu(x) < \infty$. The containment $\L^p(E, d\mu) \subset \L^p(\R, d\mu)$ is interpreted in the natural way. The symbols such as $dx$, $dt$ and $d\zeta$ denote the usual Lebesgue measure of the real line, while $d\lambda = dx/\sqrt{2\pi}$ will be the normalized version used in formulas involving Fourier transforms. If $E$ is a subset of $\R$, then $|E|$ denotes its usual Lebesgue measure. The positive half-axis of $\R$ is denoted by $\R_+ := \{ x \in \R: x \geq 0\}$, and we set also $\R_- := \R \setminus \R_+$. The notions of \textit{almost everywhere} and \textit{of measure zero} are always to be interpreted in the sense of Lebesgue measure on $\R$. The indicator function of a measurable set $E$ is denoted by $\mathbbm{1}_E$. Finally, we put $\log^+(x) := \max(\log(x), 0)$.

\section{Preliminaries}
\label{PreliminarySection}

Our proofs will use Hilbert space techniques and the complex method. In particular, we will use the complex interpretation of the Hardy classes of functions on the line with positive spectrum. In this section, we recall those basic facts of the theory of the Hardy classes $\hil^1(\R), \hil^2(\R)$ and $\hil^\infty(\R)$ which will be important in the coming sections. We discuss also properties of the shift operators $f(t) \mapsto e^{its}f(t)$ on weighted spaces on the real line, and their invariant subspaces.

\subsection{Hardy classes} \label{HardyClassSubsection}

For $p$ equal to $1$ or $2$, we denote by $\hil^p(\R)$ the subspace of $\L^p(\R, dx)$ consisting of those functions $f$ for which the Fourier transform $\widehat{f}$ vanishes on the negative part of the real axis: \[ \hil^p(\R) := \{ f \in \L^p(\R, dx) : \widehat{f}|\R_- \equiv 0\}. \]
It is a well-known fact that functions in the Hardy classes $\hil^1(\R)$ and $\hil^2(\R)$ admit a type of analytic extension to the upper half-plane \[ \H := \{ x + iy \in \mathbb{C} : y > 0 \}.\] We recall what exactly is meant by this extension and how it can be constructed. The Poisson kernel of the upper half-plane \[ \Po(t,x+iy) := \frac{1}{\pi}\frac{y}{(x-t)^2 + y^2}, \quad y > 0,\] admits a decomposition \begin{equation}\label{PoissonKernelDecomp} \Po(t,z) = \Re \Bigg(\frac{1}{\pi i(t-z)} \Bigg) = \frac{1}{ 2\pi i } \Bigg( \frac{1}{t-z} - \frac{1}{t-\conj{z}} \Bigg), \quad z = x+iy \in \H.\end{equation}
Since 
\begin{equation}
    \label{CauchyKernelFourierTransform}
    \frac{1}{t-\conj{z}} = -i \int_0^\infty e^{-i \conj{z} s} e^{its} \, ds
\end{equation} we may use Fubini's theorem to compute, in the case $f \in \hil^1(\R)$, that \begin{equation}
    \label{vanishingCauchyInt}\int_\R \frac{f(t)}{t-\conj{z}} \, dt = -i \int_0^\infty \Bigg(\int_\R f(t) e^{its} \, dt \Bigg) e^{-i\conj{z}s} ds = 0,
\end{equation} where the vanishing of the integral follows from \[ \int_\R f(t) e^{its} \, ds = \widehat{f}(-s) = 0, \quad s > 0,\] which holds for any $f \in \hil^1(\R)$ by the definition of the class. In the case $f \in \hil^2(\R)$ this argument does not work, but what instead works is an application of Plancherel's theorem and \thref{FourierTransformCauchyKernel} below to the first integral in \eqref{vanishingCauchyInt}, which again shows that this integral vanishes. Consequently, whenever $f \in \hil^p(\R)$ for $p=1, 2$, the formula \[ f(z) := \int_\R f(t) \Po(t,z) \, dt = \int_\R \frac{f(t)}{t-z} \frac{dt}{2\pi i}, \quad z \in \H\] defines, by the second integral expression above, an analytic extension of $f$ to $\H$. By the first expression, and classical properties of the Poisson kernel (see \cite[Chapter I]{garnett}), this extension satisfies 
\begin{equation} \label{pointwiseLimitExtensionEq}
    \lim_{y \to 0^+} f(x+iy) = f(x),\text{ for almost every } x \in \R.
\end{equation}
Moreover, we have \begin{equation}
    \label{H1property1}
\sup_{y > 0} \int_\R |f(x+iy)|^p \, dx  < \infty \end{equation} and 
\begin{equation}
    \label{H1property2}
    \lim_{y \to 0^+} \int_\R |f(x+iy) - f(x)|^p \, dx = 0.
\end{equation} if $f \in \hil^p(\R)$. The property \eqref{H1property1} follows readily from the Poisson integral formula for the extension of $f$ and Fubini's theorem. The property \eqref{H1property2} is a bit tricky to establish, and is proved in \cite[Chapter I, Theorem 3.1]{garnett}. In fact, the above listed properties characterize the functions in the Hardy classes.

\begin{prop}\thlabel{H1CharacterizationProp}
For $p = 1$ and $p=2$, a function $f \in \L^p(\R, dx)$ is a member of $\hil^p(\R)$ if and only if there exists an analytic extension of $f$ to $\H$ which satisfies the three properties in \eqref{pointwiseLimitExtensionEq}, \eqref{H1property1} and \eqref{H1property2}.
\end{prop}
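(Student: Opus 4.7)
The forward direction is essentially already laid out in the surrounding discussion: the Poisson integral of $f \in \hil^p(\R)$ supplies the analytic extension, and properties \eqref{pointwiseLimitExtensionEq}, \eqref{H1property1} and \eqref{H1property2} are standard attributes of the Poisson integral of an $L^p$ function, with \cite[Chapter I]{garnett} supplying the less routine $L^p$-boundary convergence. The content of the proposition is thus the converse, and there my plan is as follows. Starting from an analytic $F$ on $\H$ with $L^p$ boundary trace $f$ satisfying the three listed properties, I would first show, for every $y > 0$, that the horizontal trace $F_y(t) := F(t + iy)$ has Fourier transform supported in $[0, \infty)$, and then propagate this vanishing to $\widehat{f}$ by passing $y \to 0^+$ and invoking \eqref{H1property2}.

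For the trace-level statement, I would fix $\zeta < 0$ and $0 < y < M$ and integrate $w \mapsto F(w)\,e^{-i\zeta w}$ (analytic on $\H$) around the rectangle with vertices $\pm T + iy$ and $\pm T + iM$. The top side contributes $e^{\zeta M}\int_{-T}^{T} F(t + iM)\, e^{-i\zeta t}\, dt$; since $\zeta < 0$, the prefactor $e^{\zeta M}$ decays to zero as $M \to \infty$, while the integral factor stays bounded by \eqref{H1property1} (via Hölder for $p = 1$, Cauchy--Schwarz for $p = 2$). For the vertical sides, Fubini applied to \eqref{H1property1} gives
\[ \int_{\R}\!\!\int_y^M |F(t + is)|^p \, ds \, dt \;<\; \infty, \]
from which one extracts a sequence $T_k \to \infty$ along which $\int_y^M |F(\pm T_k + is)|^p \, ds \to 0$, forcing the vertical-side contributions to vanish. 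Letting $M \to \infty$ first and $T = T_k \to \infty$ second leaves $\int_\R F(t + iy)\, e^{-i\zeta t}\, dt = 0$, i.e.\ $\widehat{F_y}(\zeta) = 0$ for every $\zeta < 0$.

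Once the traces lie in $\hil^p(\R)$, property \eqref{H1property2} gives $F_y \to f$ in $L^p$ as $y \to 0^+$, and I would conclude by Fourier-continuity: for $p = 1$, $\widehat{F_y} \to \widehat{f}$ uniformly on $\R$ (since the transform maps $L^1$ boundedly into $C_0$), so the pointwise vanishing on $\R_-$ survives in the limit; for $p = 2$, Plancherel delivers $L^2$-convergence of the transforms and hence the a.e.\ vanishing of $\widehat{f}$ on $\R_-$. The main obstacle is the subsequential vanishing of the vertical-side integrals, which hinges on the elementary but non-automatic observation that a non-negative integrable function on $\R$ has values tending to zero along some sequence going to infinity. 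A minor secondary issue is that for $p = 2$ the Fourier integral of $F_y$ must be read through Plancherel (e.g.\ by pairing against Schwartz functions with Fourier support in $\R_-$) rather than as an absolutely convergent integral; this does not affect the overall strategy.
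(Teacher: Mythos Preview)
Your approach is correct and essentially matches the paper's: the paper simply points to \thref{H1FourierTransformFormula}, whose derivation is exactly the rectangle contour argument with a Fubini-selected subsequence for vanishing vertical sides that you describe, after which sending the height parameter to infinity for $\zeta<0$ forces the transform to vanish there. One small slip to correct: since your subsequence $T_k$ is extracted for a \emph{fixed} $M$, the limits must be taken in the order $T_k\to\infty$ first and then $M\to\infty$, not the reverse as you wrote.
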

The proposition is not hard to derive from \thref{H1FourierTransformFormula} below. Anyway, a careful proof can be found in \cite[p. 172]{havinbook}.

The following restriction on smallness of the modulus $|f|$ of a function $f \in \hil^1(\R)$ will be of crucial importance to us.

\begin{prop} \thlabel{HadyClassLogIntProp}
If $f \in \hil^1(\R)$, then \[ \int_\R \frac{\log |f(x)|}{1+x^2} dx > -\infty\] unless $f$ is the zero function.
\end{prop}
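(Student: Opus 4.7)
My approach is to realize $f$ as the boundary trace of an analytic function $F$ on $\H$ via \thref{H1CharacterizationProp}, and then invoke a Jensen-type inequality in the half-plane. Since $f$ is not (a.e.) zero in $\L^1(\R, dx)$, the pointwise boundary relation \eqref{pointwiseLimitExtensionEq} forces $F \not\equiv 0$ on $\H$. Because the zero set of a non-constant analytic function is discrete, I may fix a point $z_0 \in \H$ at which $F(z_0) \neq 0$. The Poisson kernel $\Po(\cdot, z_0) = \tfrac{1}{\pi}\tfrac{\Im z_0}{(t - \Re z_0)^2 + (\Im z_0)^2}$ is two-sided comparable to $1/(1+t^2)$ with constants depending only on $z_0$, so it suffices to show that $\int_\R \log|f(t)|\, \Po(t, z_0)\, dt > -\infty$.

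The crux of the argument is the Jensen-type inequality
\[ \log|F(z_0)| \leq \int_\R \log |f(t)|\, \Po(t, z_0)\, dt. \]
The right-hand side makes sense: the positive part is automatically integrable against $\Po(\cdot, z_0)$ since $\log^+|f(t)| \leq |f(t)|$ and $f \in \L^1(\R, dx)$, while $\Po(\cdot, z_0)$ is bounded above. To prove the inequality I would transfer to the unit disk via the Cayley map $\varphi(w) = i(1+w)/(1-w)$: after a standard absorption of the Jacobian-type weight, the composition becomes a function $g$ in the classical disk Hardy class $H^1(\D)$, non-zero at the point $w_0 := \varphi^{-1}(z_0)$. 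The disk version of Jensen's inequality,
\[ \log|g(w_0)| \leq \int_\T \log|g|\, d\omega_{w_0}, \]
where $\omega_{w_0}$ is harmonic measure at $w_0$, is a consequence of the subharmonicity of $\log|g|$ on circles $|w| = r$, passed to the limit $r \to 1^-$ using the $\L^1$-convergence of $g(re^{i\theta}) \to g(e^{i\theta})$ to control $\log^+$ and Fatou's lemma on the bounded-below quantity $-\log^-$. Pulling back under $\varphi$ yields the claimed inequality on $\H$.

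Combining these steps gives $\int_\R \log|f(t)|\, \Po(t, z_0)\, dt \geq \log|F(z_0)| > -\infty$, and comparison of the Poisson kernel with $1/(1+t^2)$ delivers the conclusion.

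The main obstacle is the Jensen inequality itself, which forces one to carefully manage $\log^-|f|$ as one approaches the boundary $\R$. The conformal transfer to $\D$ is attractive precisely because it compactifies the boundary and the $H^1(\D)$-framework packages the uniform integrability needed for the limit $r \to 1^-$ cleanly; a direct proof on $\H$ using the shifts $F_\epsilon(z) = F(z+i\epsilon)$ and subharmonic majorization is possible but requires additional bookkeeping to control the decay at infinity of the Poisson integral of $\log|F_\epsilon|$ along $\R$.
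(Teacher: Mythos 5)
Your proof is correct, and it is essentially the classical argument for this standard fact of Hardy space theory. The paper itself offers no proof and simply cites Havin and J\"oricke \cite[p.\ 35]{havinbook}, so there is no internal argument to compare yours against. A few small remarks. The conformal transfer is the right move: the weight $(1-w)^{-2}$ that converts $H^1(\H)$ to $H^1(\D)$ contributes the harmonic term $-2\log|1-w|$ whose Poisson integral reproduces its value at $w_0$, so it cancels cleanly in the Jensen inequality; it is worth saying this explicitly since it is the one step that might look like it could spoil the inequality. Your limiting argument is also sound, but the phrase ``Fatou's lemma on the bounded-below quantity $-\log^-$'' has the sign inverted: one applies Fatou to the \emph{nonnegative} function $\log^-|g(r\cdot)|$ (using the a.e.\ radial convergence $g(r e^{i\theta}) \to g(e^{i\theta})$ guaranteed for $H^1(\D)$), obtaining $\liminf_r \int_\T \log^- |g(r\cdot)|\, d\omega_{w_0/r} \geq \int_\T \log^-|g|\, d\omega_{w_0}$, which gives the correct upper bound on $\limsup_r \int_\T \log|g(r\cdot)|\, d\omega_{w_0/r}$ after pairing with the $\L^1$-controlled positive part. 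One further point worth spelling out: the Poisson kernels $P_{w_0/r}$ converge uniformly on $\T$ to $P_{w_0}$ as $r \to 1^-$, so their $r$-dependence causes no trouble in either limit. Finally, note that the route via subharmonicity and radial limits is a bit more economical than the alternative one sees in textbooks, which invokes the inner--outer factorization of $H^1(\D)$ (since $|g|$ and $|O|$ agree a.e.\ on $\T$, and the inner factor has modulus at most one); the factorization proof is slicker once the machinery is in place, but your direct approach avoids relying on a theorem whose own proof typically already contains the log-integrability statement.
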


A proof of the proposition can be found in \cite[p. 35]{havinbook}. 

We shall also need to use the corresponding Hardy class of functions which are merely bounded on $\R$, and not necessarily integrable or square-integrable on $\R$. We use directly the complex interpretation of the class. Namely, we define $\hil^\infty(\R)$ to consist of those functions $f \in \L^\infty(\R, dx)$ which can be realized as limits \[ \lim_{y \to 0^+} f(x+iy) := f(x)\] for almost every $x \in \R$, where $f$ is bounded and analytic in $\H$. It can be checked that such $f$ has a distributional spectrum which vanishes on $\R_-$. Another important point is that if $f \in \hil^\infty(\R)$, then \[ \frac{f(x)}{(i+x)^2} \in \hil^1(\R),\] since we may apply \thref{H1CharacterizationProp} to the analytic function \[ z  \mapsto \frac{f(z)}{(i+z)^2}, \quad z \in \H.\] A function $h \in \hil^\infty(\R)$ of a given (bounded, measurable) modulus $|h| = W$ on $\R$ may be constructed by setting \begin{equation}
    \label{hOuterDef} \log h(z) :=  \frac{1}{\pi i }\int_\R \Big(\frac{1}{t-z} - \frac{t}{1+t^2} \Big)\log W(t) \, dt, \quad z \in \H,
\end{equation} and $h(z) := e^{\log h(z)}$. The integral above converges if \[\int_\R \frac{\log W(t)}{1+t^2} \, dt > -\infty\] which is a necessary condition for the construction to be possible. Then \[\log |h(z)| = \int \Po(t,z) \log W(t) \, dt,\] so that the equality $\lim_{y \to 0^+} |h(x+iy)| = |h(x)| = W(x)$ for almost every $x \in \R$ is a consequence of the well-known properties of the Poisson kernel. 

\subsection{A formula and an estimate for the Fourier transform of a Hardy class function} 

If $f \in \hil^1(\R)$, then the values of $\widehat{f}(\zeta)$ may be computed using a formula different from \eqref{FourierTransformDef}. To wit, denote by $f(z)$ the extension of $f$ to $\H$ which was discussed in Section \ref{HardyClassSubsection}. The function \[G_\zeta(z) := f(z) e^{-iz\zeta} = f(z)e^{-ix\zeta + y\zeta}, \quad z = x+iy \in \H\] is analytic in $\H$, and for this reason Cauchy's integral theorem implies 
\begin{equation}
\label{CauchyIntegralRectangle} 
\int_{R(\epsilon, y, a)} G_\zeta(z) dz = 0,
\end{equation} where $dz$ denotes the complex line integral, $\epsilon, y, a$ are all positive numbers, $\epsilon < y$, and $R(\epsilon, y, a)$ denotes the rectangular contour having as corners the four points with coordinates $(-a, \epsilon)$, $(a, \epsilon)$, $(a, y)$, $(-a, y)$, oriented counter-clockwise. 

Fix $y > 0$ and let $S_y$ denote the horizontal strip in $\H$ consisting of all complex numbers with imaginary part between $0$ and $y$. Then it follows from Fubini's theorem and \eqref{H1property1} that \begin{align*}
    \int_S |G_\zeta(z)| dA(z) \leq e^{y|\zeta|} \int_\R \int_0^y |f(x+is)|ds dx<\infty,
\end{align*} where $dA(z)$ denotes the area measure on the complex plane. This expresses the integrability on $\R$ of the continuous function \[ x \mapsto \int_0^y |G_\zeta(x+is)| ds. \] Hence there exists a positive sequence $\{a_n\}_n$ which satisfies \[ \lim_{n \to \infty} a_n = +\infty \] and for which \[ \lim_{n \to \infty} \int_0^y |G_\zeta(a_n + is)| ds + \int_0^y |G_\zeta(-a_n + is)| ds = 0.\] This means that \begin{align*}
    0 & = \lim_{n \to \infty} \int_{R(\epsilon, y, a_n)} G_\zeta(z) \, dz  \\ &= -\int_\R G_\zeta(x+iy) dx + \int_\R G_\zeta(x+ i\epsilon) dx
\end{align*} 
Moreover, equation \eqref{H1property2} quite easily implies \[ \lim_{\epsilon \to 0^+} \int_\R G_\zeta(x+i\epsilon) dx = \int_\R f(x)e^{-ix\zeta} dx = \sqrt{2\pi} \widehat{f}(\zeta).\] We have proven the following formula by combining the above two expressions.

\begin{prop}\thlabel{H1FourierTransformFormula}
For $f \in \hil^1(\R)$ we may compute the Fourier transform $\widehat{f}(\zeta)$ using the formula \[ \widehat{f}(\zeta) = e^{y\zeta} \int_\R f(x+iy)e^{-ix\zeta} \, d\lambda(x)\] for any choice of $y > 0$, where $f(x+iy)$ denotes the values of the analytic extension of $f$ to $\H$.
\end{prop}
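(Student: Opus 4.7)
The plan is to derive the formula by a contour integration argument, exploiting that functions in $\hil^1(\R)$ admit an analytic extension to $\H$ which decays nicely in the horizontal direction, and shifting the contour of integration from $\R$ to the horizontal line $\R + iy$.

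First, I would fix $\zeta \in \R$ and consider the function $G_\zeta(z) = f(z) e^{-iz\zeta}$ on $\H$, which is analytic because the analytic extension of $f$ to $\H$ from \thref{H1CharacterizationProp} is analytic. For fixed $y > 0$ and $0 < \epsilon < y$, I would apply Cauchy's theorem to the rectangular contour $R(\epsilon, y, a)$ with corners $\pm a + i\epsilon$ and $\pm a + iy$, getting
\[ \int_{-a}^{a} G_\zeta(x+i\epsilon)\,dx - \int_{-a}^{a} G_\zeta(x+iy)\,dx + \text{(two vertical integrals)} = 0. \]
The heart of the argument is to let $a \to \infty$ along a suitable subsequence to kill the two vertical contributions. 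To do this, I would observe that on the strip $S = \{\epsilon < \Im z < y\}$ we have
\[ \int_S |G_\zeta(z)|\,dA(z) \leq e^{y|\zeta|} \int_{\epsilon}^{y}\int_{\R} |f(x+is)|\,dx\,ds < \infty \]
by property \eqref{H1property1}. By Fubini the function $x \mapsto \int_\epsilon^y |G_\zeta(x+is)|\,ds$ is integrable on $\R$, so its values must tend to $0$ along some sequence $a_n \to \infty$, and the same for $-a_n \to -\infty$. Along this subsequence the two vertical sides vanish in the limit, so
\[ \int_\R G_\zeta(x+i\epsilon)\,dx = \int_\R G_\zeta(x+iy)\,dx. \]

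Next, I would let $\epsilon \to 0^+$ on the left. Since $\bigl|e^{-i(x+i\epsilon)\zeta}\bigr| = e^{\epsilon\zeta}$ is uniformly bounded for $\epsilon$ in a bounded range, property \eqref{H1property2} (convergence of $f(\cdot + i\epsilon)$ to $f$ in $\L^1(\R, dx)$) yields
\[ \lim_{\epsilon \to 0^+} \int_\R f(x+i\epsilon)e^{-i(x+i\epsilon)\zeta}\,dx = \int_\R f(x) e^{-ix\zeta}\,dx = \sqrt{2\pi}\,\widehat{f}(\zeta). \]
On the right-hand side, the factorization $G_\zeta(x+iy) = e^{y\zeta} f(x+iy) e^{-ix\zeta}$ lets me pull out the constant $e^{y\zeta}$ and recognize the remaining integral as $\sqrt{2\pi}$ times the claimed expression, completing the proof.

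The main obstacle is justifying the vanishing of the vertical sides of the rectangle. Pointwise estimates on $|f(x+is)|$ as $|x| \to \infty$ are not available from the hypothesis $f \in \hil^1(\R)$ alone; only the horizontal $\L^1$-bound is. The trick of exploiting Fubini on a horizontal strip to extract a subsequence $a_n \to \infty$ along which the vertical integrals vanish is what makes the argument go through. Everything else (Cauchy's theorem, continuity of the translation in $\L^1$, factoring out $e^{y\zeta}$) is routine once this point is handled.
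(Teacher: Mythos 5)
Your proof is correct and follows essentially the same route as the paper: Cauchy's theorem on the rectangle $R(\epsilon, y, a)$, the Fubini-on-a-strip argument combined with \eqref{H1property1} to extract a subsequence $a_n \to \infty$ killing the vertical sides, and then \eqref{H1property2} to send $\epsilon \to 0^+$ and recover $\widehat{f}(\zeta)$ on the bottom edge. You have also correctly identified that the control of the vertical sides is the only non-routine point, and handled it exactly as the paper does.
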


This formula has the following simple corollary which will be of critical importance below.

\begin{cor} \thlabel{FourierDecayFromExtensionGrowthCorollary}
    If $h \in \hil^\infty(\R)$ has an analytic extension to $\H$ which satisfies, for some constant $c > 0$, an estimate of the form \[ \sup_{x \in \R} \, |h(x+iy)| \leq e^{c/y}, \quad \text{ for all } y > 0,\] then the Fourier transform $\widehat{h_*}$ of the function \[ h_*(x) := \frac{h(x)}{(i+x)^2} \in \hil^1(\R)\] satisfies \[ |\widehat{h_*}(\zeta)| \leq \sqrt{\frac{\pi}{2}} e^{2 \sqrt{c}\sqrt{\zeta}}, \quad \zeta > 0.\]
\end{cor}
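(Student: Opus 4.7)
The plan is to apply \thref{H1FourierTransformFormula} to $h_*$ and then optimize over the free parameter $y > 0$ to extract the claimed exponential rate. First I would note that $h_* \in \hil^1(\R)$ is already guaranteed by the remark in Section \ref{HardyClassSubsection}, which gives an analytic extension of $h_*$ to $\H$ by the formula $h_*(z) = h(z)/(i+z)^2$. In particular, for $z = x+iy$ with $y > 0$ we have the pointwise bound
\[ |h_*(x+iy)| = \frac{|h(x+iy)|}{|i+x+iy|^2} \leq \frac{e^{c/y}}{(1+y)^2 + x^2}. \]

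Next I would invoke \thref{H1FourierTransformFormula} to write, for arbitrary $y > 0$,
\[ |\widehat{h_*}(\zeta)| \leq e^{y\zeta} \int_\R |h_*(x+iy)| \, d\lambda(x) \leq \frac{e^{y\zeta + c/y}}{\sqrt{2\pi}} \int_\R \frac{dx}{(1+y)^2 + x^2} = \sqrt{\frac{\pi}{2}} \cdot \frac{e^{y\zeta + c/y}}{1+y}, \]
where I used the standard Poisson-type integral evaluation. Since $(1+y)^{-1} \leq 1$, we obtain the cleaner bound $|\widehat{h_*}(\zeta)| \leq \sqrt{\pi/2} \, e^{y\zeta + c/y}$, valid for every $y > 0$.

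Finally I would minimize the exponent $y\zeta + c/y$ over $y > 0$. Elementary calculus (or AM--GM) shows that the minimum is attained at $y = \sqrt{c/\zeta}$, where the exponent equals $2\sqrt{c\zeta} = 2\sqrt{c}\sqrt{\zeta}$. Substituting this optimal $y$ into the above inequality yields the desired estimate. There is essentially no obstacle here: the argument is a routine combination of the formula in \thref{H1FourierTransformFormula}, the hypothesized growth bound for $h$, and a one-variable optimization. The only subtlety worth a sentence is confirming that $h_*$ actually lies in $\hil^1(\R)$, which is handled by the construction already recalled in Section \ref{HardyClassSubsection}.
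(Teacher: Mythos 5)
Your proof is correct and follows essentially the same route as the paper: apply \thref{H1FourierTransformFormula}, bound the integrand using the hypothesis on $h$, and optimize over $y$ with the choice $y=\sqrt{c/\zeta}$. The only cosmetic difference is that you retain the sharper denominator $(1+y)^2+x^2$ before discarding the extra factor $(1+y)^{-1}\le 1$, whereas the paper bounds $|i+x+iy|^2$ below by $1+x^2$ immediately; both reduce to the same final estimate.
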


\begin{proof}
    It was mentioned in Section \ref{HardyClassSubsection} that $h_* \in \hil^1(\R)$. Therefore, we may use \thref{H1FourierTransformFormula} to estimate 
    \begin{align*}
        |\widehat{h_*}(\zeta)| &\leq e^{y\zeta}\int_\R \frac{|h(x+iy)|}{|i + x + iy|^2}\, d\lambda(x) \\ &\leq e^{y\zeta}\int_\R \frac{e^{c/y}}{1+x^2} \, d\lambda(x)
        \\ &= \sqrt{\frac{\pi}{2}} e^{y\zeta + c/y}.
    \end{align*} Since $y > 0$ can be freely chosen, we may now set it to $y = \sqrt{c/\zeta}$ to obtain the desired estimate.
\end{proof}

\subsection{A semigroup of operators and its invariant subspaces} If $w \in \L^1(\R, dx)$ and $s \in \R$, the operator $U^s: \L^2(\R, w \, dx) \to \L^2(\R, w \, dx)$ given by \[U^sf(x) := e^{isx}f(x) \] is unitary on $\L^2(\R, w \, dx)$. We shall be interested in subspaces of $\L^2(\R, w\, dx)$ which are invariant for the operators in the semigroup $\{U^s\}_{s > 0}$. Given any element $f \in \L^2(\R, w\, dx)$, we denote by $[f]_w$ the smallest closed linear subspace of $\L^2(\R, w \, dx)$ which contains $f$ and also all the functions $U^sf$, $s > 0$. 

\begin{prop} \thlabel{UsInvSubspacesProp} Let $f \in \L^2(\R, w \, dx)$ be a non-zero element which satisfies \[\int_\R \frac{\log \big(|f(x)|^2 w(x) \big) }{1+x^2} dx = -\infty.\] Then the subspace $[f]_w$ coincides with $L^2(E, w \, dx)$, where $E = \{ x \in \R : |f(x)| > 0 \}$.
\end{prop}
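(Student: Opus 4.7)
The plan is to pass from the invariant subspace problem to a statement about a Hardy class function via duality, and then use the log-integrability obstruction of Proposition \ref{HadyClassLogIntProp} to reach a contradiction.

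First, I would establish the easy inclusion $[f]_w \subseteq \L^2(E, w\,dx)$. Indeed, $E = \{|f| > 0\}$ and each $U^s f = e^{isx}f(x)$ vanishes off $E$, so the linear span of $\{U^s f\}_{s \geq 0}$ lies in $\L^2(E, w\,dx)$, and the latter is closed in $\L^2(\R, w\,dx)$ because indicator multiplication by $\mathbbm{1}_E$ is an orthogonal projection.

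For the reverse inclusion, suppose for contradiction that there exists a non-zero $g \in \L^2(E, w\,dx)$ orthogonal to $[f]_w$. The orthogonality condition unpacks to
\[
\int_\R e^{isx} f(x) \conj{g(x)} w(x) \, dx = 0, \quad s \geq 0.
\]
Set $F(x) := f(x)\conj{g(x)}w(x)$. By Cauchy--Schwarz in $\L^2(\R, w\, dx)$,
\[
\int_\R |F(x)| \, dx \leq \|f\|_{\L^2(\R, w\, dx)} \|g\|_{\L^2(\R, w\, dx)} < \infty,
\]
so $F \in \L^1(\R, dx)$. The vanishing of the exponential integrals for $s \geq 0$ translates via our definition of $\widehat{F}$ into $\widehat{F}(\zeta) = 0$ for $\zeta \leq 0$, i.e., $F \in \hil^1(\R)$.

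Now I would apply Proposition \ref{HadyClassLogIntProp}: unless $F \equiv 0$, we must have $\int_\R \frac{\log |F(x)|}{1+x^2}\, dx > -\infty$. The goal is therefore to show that our hypothesis on $f$ forces this integral to equal $-\infty$. The key identity is
\[
2\log |F(x)| = \log(|f(x)|^2 w(x)) + \log(|g(x)|^2 w(x)),
\]
valid wherever $F(x) \neq 0$ (and trivially in the extended-real sense otherwise). Since $|f|^2 w$ and $|g|^2 w$ both belong to $\L^1(\R, dx)$, the inequality $\log^+ u \leq u$ shows that both terms on the right have integrable positive part against the Poisson weight $(1+x^2)^{-1}\, dx$. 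Thus both integrals are well-defined elements of $[-\infty, \infty)$, no $+\infty - \infty$ cancellation occurs, and additivity gives
\[
2\int_\R \frac{\log|F(x)|}{1+x^2}\, dx = \int_\R \frac{\log(|f(x)|^2 w(x))}{1+x^2} \, dx + \int_\R \frac{\log(|g(x)|^2 w(x))}{1+x^2}\, dx = -\infty,
\]
the first summand being $-\infty$ by hypothesis. This contradicts $F \not\equiv 0$, so $F \equiv 0$. Since $f \neq 0$ on $E$ and $g$ is supported on $E$ modulo sets of $w$-measure zero, $f\conj{g}w \equiv 0$ forces $g \equiv 0$ as an element of $\L^2(E, w\, dx)$, the desired contradiction.

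The main obstacle is the careful handling of the logarithmic decomposition: one must confirm that the positive parts of $\log(|f|^2 w)$ and $\log(|g|^2 w)$ are integrable against the Poisson measure so that the additivity step is legitimate and no indeterminate $\infty - \infty$ arises. Everything else is bookkeeping: the orthogonal-complement argument, identification of $F$ as a Hardy class function via its spectrum, and reading off $g \equiv 0$ from $f\conj{g}w \equiv 0$ on $E$.
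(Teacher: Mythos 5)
Your argument is essentially identical to the paper's proof: both show the easy inclusion from the support of $U^s f$, then take $g \perp [f]_w$, form $F := f\conj{g}w \in \L^1(\R, dx)$, identify it as a member of $\hil^1(\R)$ via the vanishing of the moments, invoke \thref{HadyClassLogIntProp}, and split $\log|F|$ as $\tfrac12\log(|f|^2 w) + \tfrac12\log(|g|^2 w)$ with the positive parts controlled by $\log^+ u \le u$ and integrability of $|f|^2w$, $|g|^2w$. The only cosmetic difference is that you spell out the Cauchy--Schwarz step giving $F \in \L^1(\R, dx)$, which the paper leaves implicit.
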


\begin{remark} As usual, the set $E$ above is defined in a bit imprecise way. Since $f$ is, strictly speaking, merely a representative of an equivalence class of measurable functions in $\L^2(\R, w \, dx)$, the set $E$ is not well-defined pointwise. However, it is well-defined up to a set of Lebesgue measure zero, and so the initial choice of the representative is unimportant.
\end{remark}

\begin{proof}[Proof of \thref{UsInvSubspacesProp}]
Since the function $f$ vanishes almost everywhere outside of the set $E$, then so does $U^sf$ for any $s > 0$. Consequently, $[f]_w \subset \L^2(E, w \, dx)$. Conversely, let us consider an element $g \in \L^2(E, w \, dx)$ with the property that \[ \int_\R U_sf(x) \conj{g(x)} w(x) dx = \int_\R e^{isx} f(x) \conj{g(x)} w(x) dx = 0, \quad s > 0.\] Setting $h := f \conj{g} w \in \L^1(\R, dx)$, we note that the vanishing of the integrals above is equivalent to $h$ being a member of the Hardy class $\hil^1(\R)$. We note also that \[ \int_\R \frac{\log |h(x)|}{1+x^2} dx = \frac{1}{2}\int_\R \frac{\log \big( |f(x)|^2 w(x) \big)}{1+x^2} dx + \frac{1}{2}\int_\R \frac{\log \big( |g(x)|^2 w(x) \big)}{1+x^2} dx.\] The above equality is to be interpreted in a generalized sense: the first integral on the right-hand side is divergent by our assumption, and so may the second, but their positive parts are certainly finite by the assumption that $f,g \in \L^2(\R, w \, dx)$. This implies that \[ \int_\R \frac{\log |h(x)|}{1+x^2} = -\infty.\] \thref{HadyClassLogIntProp} now shows that $h = f \conj{g}w$ must be the zero function. Since $|f(x)|w(x) > 0$ on $E$ and $g$ vanishes outside of $E$, this means that $g \equiv 0$. So $[f]_w$ is a closed and dense subspace of $\L^2(E, w\,dx)$, which means that the two spaces are equal.     
\end{proof}

\begin{cor}\thlabel{invSubspaceCorollary}
    If $f \in \L^2(\R, w\, dx)$ is also a member of $\L^p(\R, w \, dx)$ for some $p > 2$, and if \[\int_\R \frac{\log w(x)}{1+x^2} \, dx = -\infty, \] then $[f]_w$ coincides with $\L^2(E, w \, dx)$, where $E = \{x \in \R : |f(x)| > 0 \}$.
\end{cor}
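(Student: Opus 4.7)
My plan is to reduce the corollary to a direct application of Proposition \ref{UsInvSubspacesProp} by verifying, under the stronger hypotheses stated, the logarithmic integrability condition
\[
\int_\R \frac{\log\bigl(|f(x)|^2\,w(x)\bigr)}{1+x^2}\,dx = -\infty.
\]
As the remark after the proposition emphasizes, this equation is to be read in the generalized sense: the positive part of the integrand must be in $\L^1(\R, dx/(1+x^2))$, while the negative part is not. The identification of $E = \{|f|>0\}$ with $\{|f|^2>0\}$ is immediate, so once the integral condition is verified the corollary is immediate from the proposition.

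The first step is to control the positive part, which uses only $f \in \L^2(\R, w\,dx)$. Applying the elementary inequality $\log^+ t \leq t$ for $t \geq 0$ gives
\[
\log^+\bigl(|f|^2 w\bigr) \leq |f|^2 w,
\]
and the right-hand side is integrable on $\R$ by hypothesis, so certainly integrable against $dx/(1+x^2)$. The same inequality applied to $|f|^p w$ and $w$ shows that the positive parts of $\log(|f|^p w)$ and of $\log w$ are likewise integrable against $dx/(1+x^2)$, because $|f|^p w, w \in \L^1(\R, dx)$.

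The second step, where the hypothesis $p>2$ enters decisively, is to obtain the divergence to $-\infty$. I plan to exploit the pointwise identity
\[
\log\bigl(|f|^2 w\bigr) \;=\; \frac{2}{p}\,\log\bigl(|f|^p w\bigr) \;+\; \frac{p-2}{p}\,\log w,
\]
valid wherever $|f|w>0$, and to integrate it against $dx/(1+x^2)$. The coefficient $(p-2)/p$ is strictly positive, so the contribution of the $\log w$ term is $-\infty$ by the hypothesis on $w$. The contribution of the $\log(|f|^p w)$ term is bounded above by a finite quantity, thanks to the positive-part bound established in the previous step; it could itself be $-\infty$, but in either case the sum is unambiguously $-\infty$.

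The only mildly delicate point — and what I would flag as the main obstacle — is making sure the arithmetic with $-\infty$ in the displayed identity is legitimate, i.e.\ that the identity may be integrated termwise in the generalized sense of the remark. This is why the first step explicitly isolates the positive parts of each term separately: with those positive parts controlled, the finite-plus-$(-\infty)$ addition in the last step is unproblematic. Combining the two steps then feeds $f$ into Proposition \ref{UsInvSubspacesProp} and produces the required equality $[f]_w = \L^2(E, w\,dx)$.
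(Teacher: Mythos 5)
Your proof is correct and uses the same approach as the paper: the identical pointwise decomposition $\log(|f|^2 w) = (2/p)\log(|f|^p w) + (1-2/p)\log w$ followed by the bound $\log t \leq t$ to control the first term and the hypothesis on $w$ to force divergence of the second. Your extra care in isolating the positive parts before doing arithmetic with $-\infty$ is a slightly more explicit rendering of what the paper states more tersely, but the argument is the same.
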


\begin{proof}
    To prove the corollary we need to verify the condition in \thref{UsInvSubspacesProp}. Note that, pointwise, we have \[ \log\big( |f|^2 w\big) = (2/p) \log \big( |f|^p w\big) + (1-2/p) \log w.\] The coefficients $2/p$ and $1-2/p$ are positive. The inequality $\log (x) \leq x$ for $x > 0$ shows that \[ \int_\R \frac{\log \big( |f(x)|^p w(x) \big) }{1+x^2} \, dx \leq \int_\R |f(x)|^p w(x) dx < +\infty.\] Note that the integral on the left might very well be equal to $-\infty$, but that is of no concern to us: we conclude from the assumption, and the pointwise inequality above, that \[ \int_\R \frac{\log \big( |f(x)|^2 w(x)\big)}{1+x^2} \,dx = -\infty \] and apply \thref{UsInvSubspacesProp}.
\end{proof}

\section{A product space and its Hardy subspace}
\label{ProductSpaceSection}

Let $\rho: \R_+ \to \R_+$ be a bounded, continuous, non-negative and decreasing function, and $w \in \L^1(\R, dx) \cap \L^\infty(\R, dx)$ be a non-negative function. We consider the product space $\L^2(\R, w\, dx) \oplus \L^2(\R_+, \rho \, dx)$. Inside of this space we embed the linear manifold $\hil^1(\R) \cap \hil^2(\R)$ in the following way:
\[ Jf := (f, \widehat{f}) \in \L^2(\R, w\, dx) \oplus \L^2(\R_+, \rho \, dx), \quad f \in \hil^1(\R) \cap \hil^2(\R). \] The tuple $Jf$ is well-defined as an element of the product space, since both $f$ and $\widehat{f}$ are members of $\L^2(\R, dx)$ and both $\rho$ and $w$ are bounded. We define the \textit{Hardy subspace} $\hil(w,\rho)$ as the norm-closure of the linear manifold \[ \{ Jf : f \in \hil^1(\R) \cap \hil^2(\R) \}\] inside of the product space $\L^2(\R, w\, dx) \oplus \L^2(\R_+, \rho \, dx)$. Thus each tuple $(h,k) \in \hil(w, \rho)$ has the property that there exists some sequence $\{f_n\}_{n}$ of functions in $\hil^1(\R) \cap \hil^2(\R)$ such that \[h = \lim_{n \to \infty} f_n\] in the space $\L^2(\R, w \, dx)$, and simultaneously \[ k = \lim_{n \to \infty} \widehat{f_n}\] in the space $\L^2(\R_+, \rho \, dx)$. 

We could have used a set of tuples $Jf$ with $f \in \hil^2(\R)$ in the definition of the Hardy subspace, and arrived at the same space. Indeed, we have the following proposition.

\begin{prop}\thlabel{KernelContainmentHardySubspace}
    With $w$ and $\rho$ as above, the Hardy subspace $\hil(w, \rho)$ contains all tuples of the form $(f, \widehat{f})$, $f \in \hil^2(\R)$. Moreover, tuples $Jf$ where $f \in \hil^1(\R)\cap\hil^2(\R)$ and $f$ extends analytically to a half-space $\{ z = x+iy \in \mathbb{C} : y > -\delta \}$, $\delta = \delta(f) > 0$, are norm-dense in $\hil(w, \rho)$.
\end{prop}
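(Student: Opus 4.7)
The plan is to prove both assertions by exhibiting explicit approximating sequences, and to exploit the boundedness of $w$ and $\rho$ to reduce the weighted convergence to unweighted $\L^2$-convergence. Since $\|w\|_\infty$ and $\|\rho\|_\infty$ are finite, convergence in $\L^2(\R, dx)$ on either side of the tuple $(f, \widehat{f})$ entails convergence in the corresponding weighted space, and Plancherel's theorem ensures that $\L^2$-convergence $g_n \to g$ on the function side automatically yields $\widehat{g_n} \to \widehat{g}$ in $\L^2(\R, dx)$. Hence, in both assertions, it suffices to produce an approximating sequence that converges to the target in $\L^2(\R, dx)$ on the function side.

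For the first claim, I would fix $f \in \hil^2(\R)$ and consider the regularised sequence
\[ g_n(z) := \frac{n}{n-iz}\, f(z), \quad z \in \H. \]
The factor $n/(n-iz)$ is analytic in $\H$, has modulus at most $1$ there, and belongs to $\hil^2(\R)$; the Cauchy--Schwarz inequality applied to $g_n = f \cdot n/(n-iz)$ then places $g_n$ in $\hil^1(\R) \cap \hil^2(\R)$ (membership in the Hardy classes being verified through \thref{H1CharacterizationProp}). Pointwise on $\R$ we have $g_n \to f$ and $|g_n| \leq |f|$, so dominated convergence gives $g_n \to f$ in $\L^2(\R, dx)$, and the reduction above shows $Jg_n \to (f, \widehat{f})$ in the product-space norm. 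Therefore $(f, \widehat{f}) \in \hil(w, \rho)$.

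For the density statement, I would start with an arbitrary $f \in \hil^1(\R) \cap \hil^2(\R)$ and use vertical translates
\[ f_n(x) := f(x + i/n), \]
interpreted via the analytic extension of $f$ to $\H$ discussed in Section \ref{HardyClassSubsection}. Each $f_n$ extends analytically to the half-plane $\{z = x+iy \in \mathbb{C}: y > -1/n\}$, and property \eqref{H1property1} shows $f_n \in \hil^1(\R) \cap \hil^2(\R)$. Property \eqref{H1property2} with $p=2$ gives $f_n \to f$ in $\L^2(\R, dx)$, while \thref{H1FourierTransformFormula} applied with $y = 1/n$ identifies $\widehat{f_n}(\zeta) = e^{-\zeta/n}\widehat{f}(\zeta)$ for $\zeta \geq 0$, so dominated convergence yields $\widehat{f_n} \to \widehat{f}$ in $\L^2(\R_+, dx)$. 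This shows that the subfamily of tuples $Jg$ with $g$ extending analytically to a larger half-plane is norm-dense already in $\{Jf : f \in \hil^1(\R) \cap \hil^2(\R)\}$, hence dense in $\hil(w, \rho)$.

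I do not expect a serious obstacle: the only points requiring minor care are the verifications that $g_n$ and $f_n$ actually belong to the appropriate Hardy classes, and these follow routinely from \thref{H1CharacterizationProp} together with the properties \eqref{pointwiseLimitExtensionEq}--\eqref{H1property2}. The boundedness hypotheses on $w$ and $\rho$ are what make the argument clean; they collapse what would otherwise be a weighted approximation problem into a Plancherel-driven $\L^2$-approximation in two standard steps.
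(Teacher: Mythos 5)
Your proof is correct and follows essentially the paper's strategy: regularize to land in $\hil^1(\R) \cap \hil^2(\R)$, use dominated convergence and Plancherel to get unweighted $\L^2$-convergence on both coordinates of the tuple, then deduce weighted convergence from the boundedness of $w$ and $\rho$. The only difference is cosmetic: the paper uses a single sequence $f_\epsilon(x) = \frac{i f(x+i\epsilon)}{\epsilon x + i}$, which simultaneously tames the decay at infinity and translates into a larger half-plane, handling both assertions at once, whereas you split these two roles between the multiplier $n/(n-iz)$ for the containment claim and the vertical translate $f(\cdot + i/n)$ for the density claim.
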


\begin{proof}
    Fix $f \in H^2(\R)$, and consider the functions $f_\epsilon(x)$ defined by the formula
    \[f_\epsilon(x) :=  \frac{i f(x+i\epsilon)}{\epsilon x + i}, \quad x \in \R, \epsilon > 0.\] These functions are contained in $\hil^1(\R) \cap \hil^2(\R)$ for each $\epsilon > 0$, and they are analytic in a half-space larger than $\H$. Note that \[ \Bigg\vert \frac{i}{\epsilon x + i} \Bigg\vert \leq 1, \quad x \in \R\] and that \[ \lim_{\epsilon \to 0^+} \frac{i}{\epsilon x + i} = 1.\] We readily see from \thref{H1CharacterizationProp} and the dominated convergence theorem that we have \[ \lim_{\epsilon \to 0^+} \int_\R |f_\epsilon - f|^2 dx = 0.\] By Plancherel's theorem, we therefore also have \[ \lim_{\epsilon \to 0^+} \int_{\R_+} |\widehat{f_\epsilon} - \widehat{f}|^2 d\zeta = 0.\] A fortiori, we have \[ \lim_{\epsilon \to 0^+} \int_\R |f_\epsilon - f|^2 w\, dx = 0\] and \[ \lim_{\epsilon \to 0^+} \int_{\R_+} |\widehat{f_\epsilon} - \widehat{f}|^2 \rho \, d\zeta = 0.\] Thus, as $\epsilon \to 0$, the tuples $Jf_\epsilon \in \hil(w, \rho)$ converge in the norm of the space to the tuple $(f, \widehat{f})$, which is therefore contained in $\hil(w, \rho)$. This proves the first statement of the proposition. The second has the same proof, we merely start with $f \in \hil^1(\R) \cap \hil^2(\R)$ and run the same argument.
\end{proof}

The shift operators \[U^s f(x) = e^{ixs}f(x), \quad f \in \L^2(\R, w \, dx)\] are unitary. Using the convention that $g(x) \equiv 0$ for $x < 0$ and $g \in \L^2(\R_+, \rho \, dx)$, the translation operators \[\widehat{U^s} g(x) := g(x - s), \quad g \in \L^2(\R_+, \rho \, dx)\] are contractions on $\L^2(\R_+, \rho \, dx)$, whenever $s > 0$. This fact is a consequence of the assumption that $\rho$ is decreasing:

\begin{align*}
    \int_{\R_+} |g(x - s)|^2 \rho(x) dx &= \int_s^{\infty} |g(x-s)|^2 \rho(x) dx \\
    & \leq \int_s^\infty |g(x-s)|^2 \rho(x-s) dx \\
    & = \int_{\R_+} |g(x)|^2 \rho(x) dx.
\end{align*}
We used that $g(x-s)$ vanishes for $x \in (0,s)$. Consequently, the operators \[U^s_* := U^s \oplus \widehat{U^s}, \quad s > 0\] are bounded on the space $\L^2(\R, w\, dx) \oplus \L^2(\R_+, \rho \, dx)$. Moreover, the Hardy subspace $\hil(w, \rho)$ is invariant for these operators. Indeed, if $f \in \hil^1(\R) \cap \hil^2(\R)$, then by the well-known property of the Fourier transform $\widehat{U^s}\widehat{f} = \widehat{U^s f}$, we obtain \[ U_*^sJf = (U^sf, \widehat{U^s} \widehat{f}) = (U^sf, \widehat{U^s f}) = JU^sf.\] The function $U^s f$ is contained in $\hil^1(\R) \cap \hil^2(\R)$, and so the above relation shows that a dense subset of $\hil(w, \rho)$ is mapped into $\hil(w, \rho)$ under each of the bounded operators $U^s_*$. The mentioned invariance follows.

\section{Strategy of the proofs}
\label{StrategySection}

This section outlines the strategy of the proofs of \thref{CondensationTheorem} and \thref{SparsenessTheorem}. 

\subsection{Two easy computations}

In our strategy, we will need to use the results of the following two computations.

\begin{lem} \thlabel{FourierTransformCauchyKernel}
Let \[ \psi_z(x) := \frac{i}{\sqrt{2\pi}(x-\conj{z})}, \quad z \in \H.\]
The Fourier transform $\widehat{\psi_z}$ equals \[ \widehat{\psi_z}(\zeta) = e^{-i\conj{z}\zeta} \mathbbm{1}_{\R_+}(\zeta),\] and the Fourier transform $\widehat{\conj{\psi_z}}$ of the conjugate of $\psi_z$ equals \[\widehat{\conj{\psi_z}}(\zeta) = e^{-i z \zeta} \mathbbm{1}_{\R_-}(\zeta).\]
\end{lem}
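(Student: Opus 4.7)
The plan is to verify the first identity by applying Fourier inversion to the proposed answer, and then to deduce the second identity by conjugation symmetry. Write $z = x_0 + iy_0$ with $y_0 > 0$, so that $\conj{z} = x_0 - iy_0$ and $\psi_z$ lies in $\L^2(\R, dx) \cap \L^\infty(\R, dx)$. Attacking the transform directly is slightly inconvenient because $\psi_z$ decays only like $1/|x|$ at infinity and is therefore not in $\L^1(\R, dx)$, so the defining integral in \eqref{FourierTransformDef} cannot be evaluated naively; I would circumvent this by working on the transform side instead.

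For the first formula, set $g(\zeta) := e^{-i\conj{z}\zeta}\mathbbm{1}_{\R_+}(\zeta)$. Since $|g(\zeta)| = e^{-y_0 \zeta}\mathbbm{1}_{\R_+}(\zeta)$, the function $g$ belongs to $\L^1(\R, dx) \cap \L^2(\R, dx)$, so its inverse Fourier transform may be computed literally. A direct evaluation yields
$$\int_\R g(\zeta) e^{ix\zeta}\, d\lambda(\zeta) = \frac{1}{\sqrt{2\pi}}\int_0^\infty e^{i(x-\conj{z})\zeta}\, d\zeta = \frac{i}{\sqrt{2\pi}(x - \conj{z})} = \psi_z(x),$$
where the improper integral converges at $+\infty$ because $\Im(x - \conj{z}) = y_0 > 0$. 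By the $\L^2$ Fourier inversion theorem it follows that $\widehat{\psi_z} = g$, which is the first claim. This is essentially a re-reading of the identity \eqref{CauchyKernelFourierTransform} already invoked in the introduction.

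For the second formula I would use the elementary conjugation symmetry $\widehat{\conj{f}}(\zeta) = \conj{\widehat{f}(-\zeta)}$, which follows immediately from the defining formula \eqref{FourierTransformDef}. Applied to $f = \psi_z$ with the identity just established, it gives
$$\widehat{\conj{\psi_z}}(\zeta) = \conj{e^{i\conj{z}\zeta}\mathbbm{1}_{\R_+}(-\zeta)} = e^{-iz\zeta}\mathbbm{1}_{\R_-}(\zeta),$$
using that $\conj{e^{i\conj{z}\zeta}} = e^{-iz\zeta}$ for real $\zeta$ and that $\mathbbm{1}_{\R_+}(-\zeta) = \mathbbm{1}_{\R_-}(\zeta)$ up to a set of measure zero. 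There is no substantive obstacle in the argument: the whole lemma is a routine computation, kept clean by the decision to verify via Fourier inversion rather than grind the forward transform of the Cauchy kernel directly.
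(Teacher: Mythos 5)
Your proof is correct and follows essentially the same route as the paper's: apply Fourier inversion to the asserted formula for $\widehat{\psi_z}$, then deduce the second identity from the conjugation symmetry $\widehat{\conj{f}}(\zeta) = \conj{\widehat{f}(-\zeta)}$. The extra care you take to justify the use of the $\L^2$ inversion theorem (noting $\psi_z \notin \L^1$ while $g \in \L^1 \cap \L^2$) is a welcome addition but does not change the substance of the argument.
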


\begin{proof}
    It is perhaps easiest to apply the Fourier inversion formula to the asserted formula for $\widehat{\psi_z}$. We readily compute
    \begin{align*}
        \int_\R \widehat{\psi_z}(\zeta)e^{i\zeta x} \,d\lambda(\zeta) &= \frac{1}{\sqrt{2\pi}} \int_0^\infty e^{(-i\conj{z} + ix)\zeta} \, d\zeta \\ & = \psi_z(x).
    \end{align*}
    The other formula follows from $\widehat{\conj{\psi_z}}(\zeta) = \conj{\widehat{\psi_z}(-\zeta)}$, which is an easily established property of the Fourier transform.
\end{proof}

\begin{prop} \thlabel{ConvolutionFourierDecayLemma}
    Assume that $f \in \L^2(\R, dx)$ satisfies \[\rho_{\widehat{f}}(x) = \int_x^\infty |\widehat{f}(\zeta)|  d\zeta = \mathcal{O}\big(e^{-c\sqrt{x}}\big), \quad x > 0\] for some $c > 0$, and let \[s(x) := \overline{\psi_i(x)} = \frac{-i}{\sqrt{2 \pi} (x-i)}.\] Then \[ \big\vert\widehat{f s}(\zeta)\big\vert = \mathcal{O}\big( e^{-c\sqrt{\zeta}} \big), \quad \zeta > 0.\]
\end{prop}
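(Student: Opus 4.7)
My plan is to compute $\widehat{fs}(\zeta)$ via Plancherel's theorem and exploit the fact that the Fourier transform of $\overline{s} = \psi_i$ is supported on $\R_+$, so that the resulting integral involves only values of $\widehat{f}$ lying to the right of $\zeta$, where by hypothesis we have good decay.

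The first step will be to observe that $s \in \L^2(\R, dx)$, since $|s(x)|^2 = (2\pi(1+x^2))^{-1}$ is integrable. By Cauchy--Schwarz we then have $fs \in \L^1(\R, dx)$, so $\widehat{fs}(\zeta) = \int_\R f(x)s(x) e^{-i\zeta x}\, d\lambda(x)$ is a literal absolutely convergent integral. I then rewrite $s(x) e^{-i\zeta x} = \overline{g_\zeta(x)}$, where
\[ g_\zeta(x) := \overline{s(x)}\, e^{i\zeta x} = \psi_i(x)\, e^{i\zeta x} \in \L^2(\R, dx), \]
so that Plancherel's theorem yields
\[ \widehat{fs}(\zeta) = \int_\R \widehat{f}(\eta)\, \overline{\widehat{g_\zeta}(\eta)}\, d\lambda(\eta). \]
The modulation identity $\widehat{e^{i\zeta \cdot} h}(\eta) = \widehat{h}(\eta - \zeta)$, combined with the formula $\widehat{\psi_i}(\eta) = e^{-\eta}\mathbbm{1}_{\R_+}(\eta)$ from \thref{FourierTransformCauchyKernel}, identifies
\[ \widehat{g_\zeta}(\eta) = e^{\zeta - \eta}\mathbbm{1}_{\eta \geq \zeta}, \]
which is real-valued.

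Substitution gives $\widehat{fs}(\zeta) = \int_\zeta^\infty \widehat{f}(\eta)\, e^{\zeta - \eta}\, d\lambda(\eta)$, and for $\zeta > 0$ the trivial bound $e^{\zeta - \eta} \leq 1$ on the range of integration finishes the job:
\[ |\widehat{fs}(\zeta)| \leq \frac{1}{\sqrt{2\pi}} \int_\zeta^\infty |\widehat{f}(\eta)|\, d\eta = \frac{\rho_{\widehat{f}}(\zeta)}{\sqrt{2\pi}} = \mathcal{O}\big(e^{-c\sqrt{\zeta}}\big). \]
No serious obstacle arises here; the only slightly technical ingredient is the Plancherel pairing, whose applicability is immediate from the membership $f, g_\zeta \in \L^2(\R, dx)$.
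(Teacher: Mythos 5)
Your proof is correct and arrives at exactly the same intermediate formula $\widehat{fs}(\zeta) = \int_\zeta^\infty \widehat{f}(\eta)\,e^{\zeta-\eta}\,d\lambda(\eta)$ that the paper obtains; the paper gets there via the convolution theorem $\widehat{fs} = \widehat{f}\ast\widehat{s}$ together with $\widehat{s}(\zeta)=e^{\zeta}\mathbbm{1}_{\R_-}(\zeta)$, while you get there via the Plancherel pairing $\widehat{fs}(\zeta)=\langle \widehat{f},\widehat{g_\zeta}\rangle$ and the modulation identity. These are two phrasings of the same computation, and the final trivial bound $e^{\zeta-\eta}\leq 1$ is identical.
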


\begin{proof}
    Note that $f s \in \L^1(\R, dx)$, and recall that the Fourier transform $\widehat{f s}$ is thus a continuous function given by the convolution of the Fourier transforms of $f$ and $s$. By \thref{FourierTransformCauchyKernel}, we obtain \[\widehat{s}(\zeta) = e^{\zeta}\mathbbm{1}_{\R_-}(\zeta),\] and so \[\widehat{f s}(\zeta) = \int_{\R} \widehat{f}(x)e^{\zeta - x}\mathbbm{1}_{\R_-}(\zeta - x)  \, d\lambda(x) = \int_\zeta^\infty \widehat{f}(x) e^{\zeta - x} \, d\lambda(x).\] The exponential term in the last integral is bounded by $1$. Therefore $\big\vert\widehat{fs}(\zeta)\big\vert \leq \rho_{\widehat{f}}(\zeta)$, and the desired estimate follows from the decay assumption on $\rho_{\widehat{f}}$.
\end{proof}

\subsection{Strategy of the proof of \thref{CondensationTheorem}} Given a function $f \in \L^2(\R, dx)$ we consider the set \[E := \{ x \in \R : |f(x)| > 0 \},\] which is well-defined up to a set of Lebesgue measure zero. Let $\mathcal{F}$ denote the family of all finite open intervals $I$ which satisfy \[ \int_I \log |f(x)| \, dx > -\infty,\] and set \[U := \cup_{I \in \mathcal{F}} I.\] Since $\log |f| \equiv -\infty$ on $I \setminus E$ for every interval $I$, it follows that if $\log |f|$ is integrable on $I$, then the set difference $I \setminus E$ must have measure zero. Consequently, since one can easily argue that we can express $U$ as a \textit{countable} union of intervals $I$ on which $\log |f|$ is integrable, the Lebesgue measure of the set difference $U \setminus E$ must be zero. However, the set difference $E \setminus U$ might have positive measure. We set \[\res(f) := E \setminus U\] and call this set the \textit{residual} of $f$. The residual is well-defined up to a set of Lebesgue measure zero.

\begin{claim*} \thlabel{claim1} Under the assumption that $\rho_{\widehat{f}}(\zeta) = \mathcal{O}\big( e^{-c\sqrt{\zeta}}\big)$ for some $c > 0$, the set $\res(f)$ has Lebesgue measure zero.
\end{claim*}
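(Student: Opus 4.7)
I assume toward a contradiction that $|\res(f)| > 0$ and plan to derive an impossibility using the Hardy subspace framework of Section \ref{ProductSpaceSection}. First, by \thref{ConvolutionFourierDecayLemma} applied to $g := fs$ with $s = \overline{\psi_i}$, I obtain $g \in \L^1(\R) \cap \L^2(\R)$ with $|\widehat{g}(\zeta)| = O(e^{-c\sqrt{\zeta}})$ on $\R_+$. Since $s$ is smooth, nowhere vanishing on $\R$, and has locally integrable $\log|s|$, the clump collections of $f$ and $g$ coincide, whence $\res(g) = \res(f)$ still has positive measure. I then choose weights for the Hardy subspace as follows:
\[ w(x) := \frac{\min(|g(x)|^2, 1) \mathbbm{1}_{\res(f)}(x)}{1+x^2}, \]
which is bounded and integrable, positive precisely on $\res(f)$ up to null sets. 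Because $\res(f)$ contains no interval --- an interval lying in $\res(f)$ would force $|g|$ to be bounded below on it, making it a clump and contradicting the definition of the residual --- every interval meets the complement of $\res(f)$ in positive measure, so that $\int_\R \log w(x)/(1+x^2) dx = -\infty$. I let $\rho : \R_+ \to \R_+$ be any bounded, continuous, strictly positive, decreasing function with $\rho(\zeta) \to 0$ as $\zeta \to \infty$, and I form $\hil(w, \rho)$.

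The heart of the proof is to produce a non-zero tuple $(v, 0) \in \hil(w, \rho)$. My plan has two stages. First, I show that $(g, \widehat{g}|_{\R_+})$ belongs to $\hil(w, \rho)$: although $g \not\in \hil^2(\R)$, the rapid decay of $\widehat{g}$ on $\R_+$ lets me approximate $g$ by Hardy-class functions $g_n \in \hil^1(\R) \cap \hil^2(\R)$ so that the tuples $Jg_n = (g_n, \widehat{g_n})$ converge to $(g, \widehat{g}|_{\R_+})$ in the weighted product norm of $\L^2(\R, w\, dx) \oplus \L^2(\R_+, \rho\, d\zeta)$; here the smallness of $w$ off of $\res(f)$ tolerates the discrepancy between $g$ and its projection onto $\hil^2(\R)$. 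Second, I exploit the $U_*^s$-invariance: the tuple $U_*^s(g, \widehat{g}|_{\R_+}) = (e^{isx} g, \widehat{g}(\,\cdot\, - s))$ stays in $\hil(w, \rho)$ for $s > 0$, its second coordinate has norm at most $\rho(s)^{1/2}\, \|\widehat{g}\|_{\L^2(\R)} \to 0$ as $s \to \infty$, while $e^{isx} g$ preserves its $\L^2(\R, w \, dx)$-norm. Extracting a weak subsequential limit along $s_n \to \infty$ then produces a tuple $(v, 0) \in \hil(w, \rho)$; the non-vanishing of $v$ requires a separate compactness-type argument, leveraging the rapid Fourier decay to rule out total dispersion of mass.

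Given the non-zero $(v, 0) \in \hil(w, \rho)$, I apply \thref{invSubspaceCorollary} (or the direct hypothesis of \thref{UsInvSubspacesProp}) to $v$: since $\int_\R \log w(x)/(1+x^2) \, dx = -\infty$, the smallest $U^s$-invariant closed subspace generated by $v$ in $\L^2(\R, w\, dx)$ is $\L^2(\res(f), w\, dx)$. Combining this with the $U_*^s$-invariance of $\hil(w, \rho)$ upgrades the conclusion to the containment $\L^2(\res(f), w\, dx) \oplus \{0\} \subset \hil(w, \rho)$. On the other hand, each element of $\hil(w, \rho)$ is a norm-limit of Hardy tuples $(f_n, \widehat{f_n})$ with $f_n \in \hil^1(\R) \cap \hil^2(\R)$; a limit of the form $(h, 0)$ with $h$ non-zero and supported on the interval-free set $\res(f)$ forces the approximants to satisfy $\widehat{f_n} \to 0$ in $\L^2(\R_+, \rho \, d\zeta)$ while $f_n \to h$ non-trivially in $\L^2(\R, w\, dx)$. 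Invoking \thref{HadyClassLogIntProp} together with Plancherel's theorem and the log-integral obstruction imposed by $w$ will then force $h = 0$, the desired contradiction. The main technical obstacle is the density claim placing $(g, \widehat{g}|_{\R_+})$ into $\hil(w, \rho)$ when $g$ is not itself Hardy, and the accompanying extraction of a non-trivial weak limit $v$ after the large-shift translation.
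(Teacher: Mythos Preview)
Your proposal has two gaps that I do not see how to repair. First, the weak-limit extraction yields only the zero tuple: for any fixed $\phi \in \L^2(\R, w\,dx)$ with $w \in \L^1(\R, dx)$, the sequence $e^{is_n x}\phi$ converges weakly to $0$ by the Riemann--Lebesgue lemma (pair against any $\psi$ and observe that $\phi\,\overline{\psi}\,w \in \L^1(\R,dx)$). No appeal to the Fourier decay of $g$ can prevent this, since that decay concerns $\widehat{g}$ on $\R_+$ and says nothing about the oscillatory dispersion of $e^{is_n x}g$ in a weighted $\L^2$ space on $\R$. Hence $v = 0$, and you have produced no non-trivial element of $\hil(w,\rho)$.

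Second, even granting the containment $\L^2(\res(f), w\,dx) \oplus \{0\} \subset \hil(w,\rho)$, there is no contradiction to extract. With $\rho(\zeta) = e^{-d\sqrt{\zeta}}$ this containment is precisely \thref{claim2}, which the paper \emph{proves} (via the oscillating outer-function construction of \thref{hnSplittingSequence} and \thref{pieceLemma}) and then \emph{uses}, together with the Plancherel trick and the matching decay $|\widehat{fs}(\zeta)| \lesssim e^{-c\sqrt{\zeta}}$ from \thref{ConvolutionFourierDecayLemma}, to deduce \thref{claim1}. Your invocation of \thref{HadyClassLogIntProp} does not yield the contradiction you want: that result constrains the pointwise modulus of a single Hardy function, whereas convergence $f_n \to h$ in $\L^2(\R, w\,dx)$ with $w$ supported on an interval-free set gives no pointwise control and transfers no log-integrability obstruction to $h$. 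In short, the paper's route runs in the opposite direction to yours: it first establishes the containment (the hard analytic step) and then pairs against $\widehat{fs}$ using the specific choice $\rho = e^{-c\sqrt{\zeta}}$; by letting $\rho$ be arbitrary you have discarded exactly the coupling between the two coordinates that makes the deduction of \thref{claim1} possible.
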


\thref{CondensationTheorem} follows immediately from the above claim. Indeed, the roles of $f$ and $\widehat{f}$ may obviously be interchanged in the statement of \thref{CondensationTheorem}, and the above claim implies that the open set $U$ equals $E$ up to an error of measure zero. Local integrability of $\log |f|$ on the set $U$ follows from its construction. 

We set \[w(x) := \min( |f(x)|^2, 1).\] Note that $\res(w) = \res(f)$ and that $w \in \L^1(\R, dx)$. Our \thref{claim1} will follow from the next assertion. 

\begin{claim*} \thlabel{claim2} Let $\rho: \R_+ \to \R_+$ be a bounded, continuous, non-negative and decreasing function which satisfies $\rho(x) = \mathcal{O}\big( e^{-d\sqrt{x}}\big)$ for some $d > 0$ and $x > 0$. Then, every tuple of the form \[(h, 0) \in \L^2(\R, w \, dx) \oplus \L^2(\R_+, \rho \, dx),\] where $h$ is any function in $\L^2(\R, dx)$ which lives only on the set $\res(w)$, is contained in the Hardy subspace $\hil(w,\rho)$.
\end{claim*}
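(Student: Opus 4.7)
My plan is to argue by Hilbert-space duality. If I can show that every $(g_1, g_2) \in \L^2(\R, w\,dx) \oplus \L^2(\R_+, \rho\,d\zeta)$ orthogonal to $\hil(w, \rho)$ satisfies $g_1 w = 0$ almost everywhere on $\res(w)$, then \thref{claim2} follows. The density provided by \thref{KernelContainmentHardySubspace} restricts the orthogonality test to tuples $Jf = (f, \hat f)$ with $f \in \hil^2(\R)$. Rewriting the first half of the pairing via Plancherel as $\int_0^\infty \hat f(\zeta) \overline{\widehat{g_1 w}(\zeta)}\,d\zeta$ and letting $\hat f$ sweep out $\L^2(\R_+, d\zeta)$, the orthogonality condition collapses to the pointwise identity $\widehat{g_1 w}(\zeta) = -g_2(\zeta)\rho(\zeta)$ for almost every $\zeta > 0$. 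Setting $\phi := g_1 w \in \L^2(\R, dx)$, which is supported on $E = \{w > 0\}$, the problem reduces to showing that $\int_0^\infty |\hat\phi(\zeta)|^2/\rho(\zeta)\,d\zeta < \infty$ forces $\phi$ to vanish a.e.\ on $\res(w)$.

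Under the assumed bound $\rho(\zeta) \leq Ce^{-d\sqrt{\zeta}}$, this condition yields the weighted Fourier integrability $\int_0^\infty |\hat\phi(\zeta)|^2 e^{d\sqrt{\zeta}}\,d\zeta < \infty$, and in particular $\hat\phi \in \L^1(\R_+)$ by Cauchy-Schwarz. The Riesz projection $\phi_+ := P_+\phi$ therefore lies in $\hil^\infty(\R) \cap \hil^2(\R)$ and extends to a bounded analytic function on $\H$ via the representation $\phi_+(z) = \int_0^\infty \hat\phi(\zeta) e^{iz\zeta}\,d\lambda(\zeta)$. Writing $\phi = \phi_+ + \phi_-$ with $\phi_- := P_-\phi \in \overline{\hil^2(\R)}$, the assumption $\phi \equiv 0$ off $E$ gives $\phi_+(x) = -\phi_-(x)$ for a.e.\ $x \in \R \setminus E$. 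Defining $F(z) := \phi_+(z)$ on $\H$ and $F(z) := -\phi_-(z)$ on $\H^-$, the matching of boundary values continues $F$ analytically across $\R \setminus E$ (Morera) and produces an analytic function on $\mathbb{C} \setminus E$.

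The final and hardest step is to show that $F$ extends analytically across $\res(w) \subset E$, from which the jump formula forces $\phi = 0$ on $\res(w)$. Here I would combine the outer-function calculus of Section \ref{HardyClassSubsection} and construction \eqref{hOuterDef} with the log-integrability dichotomy \thref{HadyClassLogIntProp}. Because the ambient weight has the form $w = \min(|f|^2, 1)$, the clump-free condition built into $\res(w)$ guarantees $\int_I \log w(t)\,dt = -\infty$ for every interval $I$ meeting $\res(w)$ in positive measure. If $\phi\mathbbm{1}_{\res(w)}$ were nontrivial, I would combine the bounded function $F$ on $\H$ with an outer function whose modulus is majorized by $|\phi|$ on $\res(w)$ to produce a nonzero Hardy class element whose log modulus would be forced to diverge on every subinterval of $\res(w)$, while \thref{HadyClassLogIntProp} simultaneously forces its Poisson integrability. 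This contradiction would compel $\phi \equiv 0$ on $\res(w)$.

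The principal obstacle is precisely this last construction: coupling the bounded analytic $F$ on $\H$ to an outer function tuned to $|\phi|$ on $\res(w)$ in such a way that the boundary behavior on the clump set $U$ remains Poisson-integrable while the behavior on $\res(w)$ is forced to diverge logarithmically. The balance depends on the sharp correspondence between Hardy-class interior growth and Fourier decay encoded in \thref{FourierDecayFromExtensionGrowthCorollary}, and it closely mirrors the strategy developed in \cite{malman2023revisiting} for the analogous condensation phenomenon in weighted polynomial approximation.
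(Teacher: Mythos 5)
Your reduction to duality is correct and is a genuinely different route from the paper's. You rightly observe that it suffices to show: any $(g_1,g_2)$ orthogonal to $\hil(w,\rho)$ has $g_1 w \equiv 0$ a.e.\ on $\res(w)$, and your Plancherel computation correctly collapses the orthogonality to $\widehat{g_1 w}(\zeta) = -g_2(\zeta)\rho(\zeta)$ on $\R_+$ with the weighted Fourier bound $\int_{\R_+}|\widehat{\phi}|^2 e^{d\sqrt{\zeta}}\,d\zeta < \infty$ for $\phi := g_1 w$. The paper instead proceeds constructively, building a highly oscillating sequence of outer functions $\{h_n\}_n$ in $\hil^\infty(\R)$ (via the explicit measure in its proof of \thref{pieceLemma}) with the four properties of \thref{hnSplittingSequence}, controlling their growth in $\H$ by the Poisson estimate of \thref{PsnIntegralEstimate}, and then invoking weak compactness, \thref{invSubspaceCorollary}, and shift-invariance of $\hil(w,\rho)$ to conclude.

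However, your proposal has two genuine gaps beyond the reduction. First, the Morera gluing of $\phi_+$ and $-\phi_-$ gives an analytic function only on $\mathbb{C}\setminus\overline{E}$ where $E=\{w>0\}$, and $E$ can be dense in $\R$ (indeed $E=\R$ is a perfectly reasonable case, e.g. $f$ nowhere zero): the continuation across $\R\setminus E$ may then be vacuous, and $F$ does not exist as a single analytic function. Second, and decisively, you yourself flag the central step --- extending $F$ across $\res(w)$ --- as ``the principal obstacle'' without giving an argument. This is precisely where all the work lies: the decay $e^{-d\sqrt{\zeta}}$ is subexponential and therefore does \emph{not} give analytic continuation of $\phi_+$ across $\R$ (that is the whole point of the critical exponent $1/2$ and of \thref{SparsenessTheorem}), so the extension across $\res(w)$ cannot be extracted from a Paley--Wiener-type argument. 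The paper's mechanism for exploiting the divergence of $\int_I \log w$ on every interval $I$ meeting $\res(w)$ is the construction in \thref{pieceLemma} and the dyadic oscillating density $\log W$, and no analogue of that is present in your sketch. As it stands, the proposal identifies a plausible dual reformulation but does not establish the claim.
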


To prove \thref{claim1} from \thref{claim2} we will use a trick involving Plancherel's theorem. We set $\rho(x) = e^{-c \sqrt{x}}$, where $c > 0$ is the constant appearing in \thref{claim1}. Let $h$ be as in \thref{claim2}, and \[ s(x) := \overline{\psi_i}(x) = \frac{-i}{\sqrt{2 \pi} (x-i)}\] be as in \thref{ConvolutionFourierDecayLemma}. We will show that \[ \int_\R h \conj{f s} \, dx = 0.\]
This implies, by the generality of $h$, that $fs$ is zero on the set $\res(w) = \res(f)$. Since $s$ is non-zero everywhere on $\R$, in fact $f$ is zero on $\res(f)$. Since $\res(f) \subset E = \{ x \in \R : |f(x)| > 0\}$, it follows that the residual has Lebesgue measure zero. Thus establishing the vanishing of the above integral is sufficient to prove \thref{claim1} from \thref{claim2}. We do so next.

Because $(h,0) \in \hil(w, \rho)$, there exists a sequence $\{g_n\}$ of functions $g_n \in \hil^1(\R) \cap \hil^2(\R)$ such that $g_n \to h$ in the norm of $\L^2(\R, w\, dx)$ and $\widehat{g_n} \to 0$ in the norm of $\L^2(\R_+, \rho \, dx)$. Consider the quantities \[\int_\R (h-g_n)\conj{f s} \, d\lambda = \int_E (h-g_n)\sqrt{w} \frac{\conj{fs}}{\sqrt{w}} d\lambda.\] We passed from domain of integration $\R$ into $E$, since $f$ vanishes outside of $E$ anyway (note also that $w > 0$ almost everywhere on $E$). By the Cauchy-Schwarz inequality, we obtain 

\begin{align*}
    \Bigg\vert \int_\R (h-g_n)\conj{f s} \, d\lambda \Bigg\vert \leq \sqrt{\int_\R |h-g_n|^2 w \,d\lambda} \sqrt{\int_E \frac{|f|^2|}{w}|s|^2 d\lambda}.
\end{align*} Note that the first of the factors on the right-hand side of the inequality above converges to $0$. The other factor is finite. Indeed, since $|f|^2/w \equiv 1$ on the set where $|f| < 1$, and $|f|^2/w = |f|^2$ on the set where $|f| \geq 1$, we obtain $\frac{|f|^2}{w} \leq |f|^2 + 1$, and consequently \[ \frac{|f|^2}{w}|s|^2  \leq |f|^2|s|^2 + |s|^2 \in \L^1(\R, dx).\]
This computation implies the formula \[ \int_\R h \conj{f s} \,d\lambda = \lim_{n \to \infty} \int_\R g_n \conj{f s} \, d\lambda = \lim_{n \to \infty}\int_{\R_+} \widehat{g_n} \conj{\widehat{f s}} \, d\lambda,\] where we used Plancherel's theorem in the last step. Now, recall that by \thref{ConvolutionFourierDecayLemma} we may estimate
\[ \Bigg\vert \int_{\R_+} \widehat{g_n} \conj{\widehat{f s}} \, d\lambda \Bigg\vert \leq A \int_{\R_+} |\widehat{g_n}(\zeta)| e^{-c \sqrt{\zeta}} \, d\lambda(\zeta) \] for some positive constant $A$. By again using Cauchy-Schwarz inequality, we obtain 
\begin{align*}
    \Bigg\vert \int_\R \widehat{g_n} \conj{\widehat{f s}} \, d\lambda \Bigg\vert \leq A \sqrt{ \int_{\R_+} |\widehat{g_n}(\zeta)|^2 e^{-c \sqrt{\zeta}} \, d\lambda(\zeta)} \sqrt{\int_{\R_+} e^{-c \sqrt{\zeta}} \, d\lambda(\zeta)}.
\end{align*} The second factor on the right-hand side is certainly finite. Since $\rho(x) = e^{-c\sqrt{x}}$ and $g_n \to 0$ in $\L^2(\R_+, \rho\, dx)$, the first factor above converges to $0$, as $n \to \infty$. All in all, we have obtained that \[ \int_\R h \conj{f s} \, d\lambda = \lim_{n \to \infty} \int_{\R_+} \widehat{g_n} \conj{\widehat{f s}} \, d\lambda = 0.\] By the earlier discussion, this is sufficient to establish \thref{claim1} from \thref{claim2}. We need to prove \thref{claim2} in order to prove \thref{CondensationTheorem}. We will do so in the coming sections.

\subsection{Strategy of the proof of \thref{SparsenessTheorem}}

We will derive \thref{SparsenessTheorem} from the following claim.

\begin{claim*} \thlabel{claim3} There exists a compact set $E \subset \R$ of positive Lebesgue measure, and an increasing function $M: \R_+ \to \R_+$ which satisfies \begin{equation} \label{MfastIncrease}
\lim_{x \to \infty} \frac{M(x)}{x^a} = \infty
\end{equation} for every $a \in (0, 1/2)$, such that if \[\rho(x) := e^{-M(x)}, \quad x > 0, \] then the Hardy subspace $\hil(\mathbbm{1}_E, \rho)$ is properly contained in $\L^2(E, \mathbbm{1}_E \, dx) \oplus \L^2(\R_+, \rho \, dx)$.
\end{claim*}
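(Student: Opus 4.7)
The plan is to prove \thref{claim3} by a Hahn--Banach duality argument, reducing the problem to the construction of a single non-zero function supported on a nowhere-dense set whose Fourier transform decays at essentially the critical $\sqrt{\zeta}$-exponential rate. \emph{Duality reduction.} To show that $\hil(\mathbbm{1}_E, \rho) \subsetneq \L^2(E, \mathbbm{1}_E\, dx) \oplus \L^2(\R_+, \rho\, d\zeta)$, it suffices to produce a non-zero tuple $(F,G)$ in the orthogonal complement. By \thref{KernelContainmentHardySubspace} it is enough to test against $Jf=(f,\widehat f)$ for $f \in \hil^2(\R)$. Extending $F \in \L^2(E, dx)$ by zero to $\R$, Plancherel's theorem together with the vanishing of $\widehat f$ on $\R_-$ converts the orthogonality condition into
\begin{equation*}
\int_{\R_+} \widehat f(\zeta)\, \overline{\widehat F(\zeta) + G(\zeta)\rho(\zeta)}\, d\zeta = 0 \quad \text{for every } \widehat f \in \L^2(\R_+).
\end{equation*}
This forces $G(\zeta) = -\widehat F(\zeta)/\rho(\zeta)$ on $\R_+$, and the membership $G \in \L^2(\R_+, \rho)$ becomes
\begin{equation*}
\int_{\R_+} |\widehat F(\zeta)|^2\, e^{M(\zeta)}\, d\zeta < \infty,
\end{equation*}
where $M := -\log \rho$. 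The task thus reduces to producing a compact $E \subset [0, b]$ of positive Lebesgue measure containing no intervals, a non-zero $F \in \L^2(E, dx)$, and an increasing $M$ satisfying \eqref{MfastIncrease} for which the preceding integral converges.

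\emph{Construction of $E$ and $F$.} Following the ideas of Khrushchev \cite{khrushchev1978problem}, I would build $E$ as a generalized fat Cantor set in $[0, b]$, obtained by iteratively removing central sub-intervals of geometrically shrinking length from each remaining component. In parallel, $F$ is constructed as the $\L^2$-limit of a sequence of smooth functions $F_n$, each supported in the $n$-th stage approximation $E_n \supset E$; on every maximal interval of $E_n$, $F_n$ is taken to be a bump vanishing to a controlled, $n$-dependent order at the endpoints. By tuning the lengths of the removed intervals against the vanishing orders, one obtains dyadic estimates of the form $|\widehat{F_n}(\zeta)| \lesssim e^{-\phi(\zeta)}$ on blocks of $\R_+$ at scale $2^n$, which stabilize in the limit to $|\widehat F(\zeta)| \lesssim e^{-\phi(\zeta)}$ for some $\phi(\zeta)$ just below $\sqrt{\zeta}$, say $\phi(\zeta) \asymp \sqrt{\zeta}/\log \zeta$. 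Setting $M(\zeta) := 2\phi(\zeta) - \psi(\zeta)$ for a slowly growing correction $\psi$ ensuring the integrability in the last display, we obtain an $M$ that trivially satisfies \eqref{MfastIncrease}.

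\emph{Main obstacle.} The delicate step is the existence of such an $F$ with \emph{both} (i) support in the nowhere-dense set $E$ and (ii) Fourier decay at the rate just described. By Cauchy--Schwarz, $\int_{\R_+} |\widehat F|^2 e^{M} d\zeta < \infty$ forces $\rho_{\widehat F}(x) \lesssim x^{1/4} e^{-M(x)/2}$. Were $M(x) \geq c\sqrt{x}$ to hold for some $c > 0$, then \thref{CondensationTheorem} (applied with $\widehat F$ in the role of $f$) would force the spectrum of $\widehat F$, which equals $-E$, to coincide with an open set modulo a null set; since $E$ is closed with empty interior this is possible only when $|E| = 0$, contradicting the construction. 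Hence $M$ must lie strictly below every constant multiple of $\sqrt{x}$ yet still exceed $x^a$ for every $a < 1/2$, and achieving this border-line rate while keeping $F$ supported on a nowhere-dense set is the technical heart of the proof. This is precisely where Khrushchev's delicate lacunary construction enters, which we adapt to the present half-line setting.
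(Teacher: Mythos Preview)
Your duality reduction is correct and in fact coincides with the computation the paper performs in Section~4.3 (run in the opposite direction): orthogonality of $(F,G)$ to every $Jf$ with $f\in\hil^2(\R)$ is equivalent to $G=-\widehat F|_{\R_+}/\rho$ together with $\int_{\R_+}|\widehat F|^2 e^{M}\,d\zeta<\infty$. So your plan amounts to proving \thref{SparsenessTheorem} first (construct $F$ directly) and then reading off \thref{claim3} by duality --- the reverse of the paper's logic.

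The paper does \emph{not} construct such an $F$ directly. Instead it shows that $\hil(\mathbbm{1}_E,\rho)$ contains no non-zero tuple of the form $(0,k)$, which already forces proper containment. The ingredients are: (i) a Legendre-type growth bound, showing that any $f\in\hil^1(\R)\cap\hil^2(\R)$ with $\int|\widehat f|^2\rho\le C$ extends to $\H$ with $|f(x+iy)|\lesssim e^{M_*(y)}/y$, where $M_*=M\circ(M')^{-1}$; (ii) for $M(x)=\sqrt{x}/\log x$ one has $\int_0^\delta M_*(y)\,dy<\infty$, and a fat Cantor set $E$ is built so that $\int_{[0,A]\setminus E} M_*(\dist{x}{E})\,dx<\infty$; (iii) Khrushchev's harmonic-measure estimate on the sawtooth domain over $E$ then gives a subharmonic majorant argument showing that if $f_n\to0$ in $\L^2(E,dx)$ with uniformly bounded $\int|\widehat{f_n}|^2\rho$, then $f_n\to0$ locally uniformly in $\H$; hence any $(0,k)\in\hil(\mathbbm{1}_E,\rho)$ has $k=0$. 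The Khrushchev input the paper uses is the harmonic-measure lemma on sawtooth domains, not a direct construction of a compactly supported function with fast Fourier decay.

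Your sketch, by contrast, defers all the analytic content to the construction of $F$, and the mechanism you propose does not deliver it. Bumps vanishing to a fixed finite order at the endpoints of the stage-$n$ intervals yield only polynomial Fourier decay $|\zeta|^{-N}$, not $e^{-\sqrt{\zeta}/\log\zeta}$; obtaining sub-exponential decay for a function supported on a nowhere-dense set requires a genuinely infinite-scale construction (infinite-order vanishing, carefully matched to the geometry of $E$), and the phrase ``Khrushchev's delicate lacunary construction'' does not point to a result in \cite{khrushchev1978problem} that one can quote for this purpose. As written, the proposal identifies the right target but leaves the hard step as a black box whose existence is precisely what needs to be proved.
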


We proceed to show how one proves \thref{SparsenessTheorem} from this claim. Let $E$ and $\rho = e^{-M}$ be as in \thref{claim3}, and assume that the non-zero tuple $(h,k) \in \L^2(E, \mathbbm{1}_E \, dx) \oplus \L^2(\R_+, \rho \, dx)$ is orthogonal to $\hil(\mathbbm{1}_E, \rho)$. We shall soon see that $h$ in fact is non-zero. The function $h$ lives only on the set $E$, and we will show that it has the required spectral decay. Recall \thref{FourierTransformCauchyKernel}, let $\psi_z \in \hil^2(\R)$ be as in that lemma, and let $g$ be the inverse Fourier transform of $k\rho$, so that $\widehat{g} = k\rho \in \L^2(\R_+, dx)$. In fact $g \in \hil^2(\R)$, since its spectrum is positive.  The orthogonality means that
\begin{align*}
0 &= \int_E h\conj{\psi_z} \, d\lambda  + \int_{\R_+} k\conj{\widehat{\psi_z}} \rho \,d\lambda \\ & = \int_E h \conj{\psi_z}  \, d\lambda + \int_{\R} g \conj{\psi_z} \, d\lambda.
\end{align*} We used Plancherel's theorem. The above relation shows that the function $G := h + g \in \L^2(\R, dx)$ is orthogonal in $\L^2(\R, dx)$ to each of the functions $\psi_z$. Let $P: \L^2(\R, dx) \to \hil^2(\R)$ be the orthogonal projection. In terms of Fourier transforms, we have \[ \widehat{Ph}(\zeta) = \widehat{h}(\zeta) \mathbbm{1}_{\R_+}(\zeta).\] Then \[ G_0 := PG = Ph + g\] is orthogonal not only to $\psi_z$, but also to $\conj{\psi_z}$, since by \thref{FourierTransformCauchyKernel} the functions $\conj{\psi_z}$ have spectrum contained in $\R_-$. But then the decomposition formula for the Poisson kernel in \eqref{PoissonKernelDecomp} shows that \[ \int_\R G_0(t) \Po(t,z) \, dt = 0\] for each $z \in \H$, and it is an elementary fact about the Poisson kernel that we must, in this case, have $G_0 \equiv 0$. So $Ph = -g$. We can now argue that $h \neq 0$. Indeed, if $h = 0$, then $g = 0$. Since $\widehat{g} = k\rho$, that would mean $k = 0$, contradicting that the tuple $(h,k)$ is non-zero. Having established that $h \neq 0$, we proceed by taking Fourier transforms to obtain \[\widehat{h} \mathbbm{1}_{\R_+} = \widehat{Ph} = -\widehat{g} = -k \rho. \] Using Cauchy-Schwarz inequality, we may now estimate
\begin{align*}
    \rho_{\widehat{h}}(x) & = \int_x^\infty |k| \rho \, d\zeta \\
    &\leq \sqrt{ \int_x^\infty \rho \,d\zeta}  \sqrt{ \int_x^\infty |k|^2 \rho \,d\zeta}, \quad x > 0.
\end{align*} The second factor is finite, and the growth of $M$ asserted in \thref{claim3} implies that for every fixed $a \in (0,1/2)$ there exists a constant $C(a) > 0$ for which we have  \[\rho(\zeta) = e^{-M(\zeta)} \leq e^{-C(a)\zeta^a}, \quad \zeta > 0.\] It follows that the integral inside the square root of the first factor above satisfies \[ \int_x^\infty \rho \,d\zeta \leq \int_x^\infty e^{-C(a)\zeta^a} \, d\zeta = \mathcal{O}\big( e^{-C(a)x^a} \big), \quad x > 0.\] Since $a \in (0, 1/2)$ was arbitrary, we conclude that $\rho_{\widehat{h}}(x) = \mathcal{O}\big( e^{-x^a})$ for every $a \in (0,1/2)$ and $x > 0$. This easily implies \thref{SparsenessTheorem}. 

It follows that \thref{SparsenessTheorem} is implied by \thref{claim3}.

\section{Proof of \thref{CondensationTheorem}}
Our proof is an adaptation to the half-plane setting of the authors' technique from \cite{malman2023revisiting}, and in fact the two proofs are very similar. The problem studied in \cite{malman2023revisiting} is different, but in both the present work and in the reference, the main trick consists of constructing a highly oscillating sequence of functions which simultaneously obey appropriate spectral bounds.

\subsection{A sufficient construction} \label{constructSubsectionDiscussion} We start by reducing our task to a construction of a certain sequence of bounded functions. Recall that $w(x) = \min( |f(x)|^2 ,1) \in \L^1(\R, dx)$ and that $\rho$ has the decay $\rho(\zeta) = \mathcal{O}\big( e^{-d\sqrt{\zeta}}\big)$ for $\zeta > 0$ and some $d > 0$. Note that we may assume throughout that \[\int_\R \frac{\log w(x)}{1+x^2} \, dx = -\infty.\] Indeed, if on the contrary this integral converges, then $\res(w) = \res(f)$ is void, and both \thref{claim2} of Section \ref{StrategySection} and \thref{CondensationTheorem} (with $f$ and $\widehat{f}$ playing opposite roles) hold trivially.

We may decompose $\res(w)$ as \[ \res(w) = \cup_{m \geq 1} \, F_m\] where \begin{equation}
    \label{resWdecompFm} F_m :=  [-m, m] \cap \{x \in \R : w(x) > 1/m \} \cap \res(w).
\end{equation} The set equality above holds up to an error of measure zero. The sets $F_m$ are bounded, and on each of them $w$ is bounded from below.

\begin{prop} \thlabel{hnSplittingSequence}
    In order to establish \thref{claim2}, it suffices to construct, for any fixed $m \in \mathbb{N}$ and $c > 0$, a sequence of functions $\{h_n\}_n$ in $\hil^\infty(\R)$ which has the following properties.

    \begin{enumerate}[(i)]
        \item The analytic extensions of the functions $h_n$ to $\H$ satisfy the bound  $|h_n(x+iy)| \leq e^{\frac{c}{y}}$ for $y > 0$,
        \item  $\lim_{n \to \infty} h_n(x) = 0$ for almost every $x \in F_m$,
        \item $\lim_{n \to \infty} h_n(z) = 1$ for every $z \in \H$,
        \item there exists an $A > 0$ and $p > 2$ such that $|h(x)|^p w(x) < A$ for almost every $x \in \R$.
    \end{enumerate}
\end{prop}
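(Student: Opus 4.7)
The plan is to exhibit a single tuple $(h_g, 0) \in \hil(w, \rho)$ whose first coordinate is non-zero on $F_m$, and then to apply the shift-invariance of Section \ref{ProductSpaceSection} together with \thref{invSubspaceCorollary} to propagate this into the inclusion $\L^2(F_m, w\,dx) \oplus \{0\} \subset \hil(w, \rho)$. Since $\res(w) = \bigcup_m F_m$ up to null sets, doing so for each $m$ proves \thref{claim2}. We may assume throughout that $\int_\R \log w(x)/(1+x^2)\,dx = -\infty$, as otherwise $\res(w)$ is null and nothing remains to show.

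Fix $m$ and choose $c > 0$ with $4\sqrt{c} < d$, where $\rho(\zeta) = \mathcal{O}(e^{-d\sqrt\zeta})$; apply the hypothesis to obtain $\{h_n\}_n$ satisfying (i)--(iv). Set $g(x) := 1/(i+x)^2 \in \hil^1(\R) \cap \hil^2(\R)$, a bounded and non-vanishing function on $\R$. Then $h_n g = h_n/(i+x)^2 \in \hil^1(\R) \cap \hil^2(\R)$, so both $Jg$ and $J(h_n g)$ lie in $\hil(w, \rho)$ by definition, and so does their difference
\[ \bigl((1 - h_n) g,\; \widehat{g} - \widehat{h_n g}\bigr) \in \hil(w, \rho). \]
The Fourier component converges to $0$ in $\L^2(\R_+, \rho\,d\zeta)$: \thref{FourierDecayFromExtensionGrowthCorollary} applied to $h_n$ gives the uniform pointwise bound $|\widehat{h_n g}(\zeta)| \leq \sqrt{\pi/2}\, e^{2\sqrt c \sqrt\zeta}$, while \thref{H1FourierTransformFormula} evaluated on a fixed horizontal line $\Im z = y_0 > 0$, together with property (iii) and dominated convergence, yields $\widehat{h_n g}(\zeta) \to \widehat{g}(\zeta)$ pointwise on $\R_+$; the condition $4\sqrt c < d$ supplies a dominating function integrable against $\rho$.

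For the spatial component, property (iv) combined with $w \leq 1$ yields the uniform bounds $\sup_n \|(1 - h_n) g\|_{\L^q(\R, w\,dx)} < \infty$ for both $q = 2$ and $q = p > 2$, via the pointwise inequality $|h_n|^q w \leq A^{q/p} w^{1 - q/p} \leq A^{q/p}$. Passing to a subsequence, $(1 - h_n) g$ converges weakly in both $\L^2(\R, w\,dx)$ and $\L^p(\R, w\,dx)$ to a common limit $h_g \in \L^2 \cap \L^p(\R, w\,dx)$. Since $\hil(w, \rho)$ is norm-closed (hence weakly closed) in the product Hilbert space and the Fourier component converges strongly to $0$, the tuple $(h_g, 0)$ lies in $\hil(w, \rho)$. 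On $F_m$ the bound $w \geq 1/m$ combined with (iv) keeps $|h_n|$ uniformly bounded by $(Am)^{1/p}$, so property (ii) and dominated convergence upgrade pointwise convergence to $(1 - h_n) g \to g$ in $\L^2(F_m, w\,dx)$; combined with weak convergence, this forces $h_g = g$ almost everywhere on $F_m$, so in particular $h_g$ is non-vanishing there.

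Finally, since $h_g \in \L^p(\R, w\,dx)$ for $p > 2$ and $\int \log w(x)/(1+x^2)\,dx = -\infty$, \thref{invSubspaceCorollary} gives $[h_g]_w = \L^2(\{|h_g| > 0\}, w\,dx) \supset \L^2(F_m, w\,dx)$. Because $\hil(w, \rho)$ contains $(h_g, 0)$ and is invariant under the bounded operators $U^s_* = U^s \oplus \widehat{U^s}$ of Section \ref{ProductSpaceSection}, the closed invariant subspace generated by $(h_g, 0)$ equals $[h_g]_w \oplus \{0\}$ and is contained in $\hil(w, \rho)$, yielding $\L^2(F_m, w\,dx) \oplus \{0\} \subset \hil(w, \rho)$ as desired. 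The main obstacle is the weak-compactness step producing $h_g$ together with the identification $h_g = g$ on $F_m$; the remaining ingredients, namely the Fourier-side convergence and the shift-invariance propagation, follow from the preparatory material assembled in Sections \ref{PreliminarySection}--\ref{ProductSpaceSection}.
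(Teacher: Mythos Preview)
Your proof is correct and follows essentially the same approach as the paper's: the paper works with the tuples $Jg_n = (h_n g,\widehat{h_n g})$, passes to a weak limit $(h,\widehat{\Psi})$ with $\Psi = g$, and then subtracts $J\Psi$ to obtain $(h-\Psi,0)$, which is precisely your $(h_g,0)$ up to sign. The only cosmetic differences are that you work directly with $(1-h_n)g$ instead of subtracting at the end, and that you obtain strong $\L^2(\R_+,\rho\,d\zeta)$ convergence of the Fourier component via dominated convergence, whereas the paper identifies the weak limit from boundedness plus pointwise convergence.
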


\begin{proof} Property $(i)$, together with \thref{FourierDecayFromExtensionGrowthCorollary}, implies that the functions $g_n(x) = \frac{h_n(x)}{(i+x)^2} \in \hil^1(\R) \cap \hil^2(\R)$ obey the spectral bound $|\widehat{g_n}(\zeta)| \leq \sqrt{\frac{\pi}{2}} e^{2\sqrt{c}\sqrt{\zeta}}, \zeta > 0$. Since $\rho(\zeta) = \mathcal{O}\big(e^{-d\sqrt{\zeta}}\big)$ for some $d > 0$, the spectral bound implies that \[ \sup_n \int_{\R_+} |\widehat{g_n}|^2 \rho \, d\zeta < \infty \] if $c$ is small enough. Together with $(iv)$, we see that $Jg_n = (g_n, \widehat{g_n})$ forms a bounded subset of the Hardy subspace of product space $\L^2(\R, w \, dx) \oplus \L^2(\R_+, \rho \, dx)$. We may thus assume, by passing to a subsequence, that $\{Jg\}_n$ tends weakly in $\hil(w, \rho)$ to some tuple $(h,k)$. In fact, $(iv)$ implies that $\{g_n\}_n$ is a sequence bounded in $\L^p(\R, w \, dx)$, so we have that $h \in \L^p(\R, w \, dx)$ for some $p > 2$. The fact that $h \equiv 0$ on $F_m$ is a consequence of the weak convergence of $g_n$ to $h$ and the condition $(ii)$, which implies $\lim_{n \to \infty} g_n(x) = 0$ for almost every $x \in F_m$. Moreover, by the formula in \thref{H1FourierTransformFormula}, we have \[ \widehat{g_n}(\zeta) = e^{y\zeta} \int_\R \frac{h_n(x+iy)}{(i+x+iy)^2} e^{-ix\zeta} d\lambda(x)\] for any $y > 0$. The integrand converges pointwise to $\frac{e^{-ix\zeta}}{(i+x+iy)^2}$ by $(iii)$, and it is dominated pointwise by the integrable function \[ x  \mapsto \frac{e^{\frac{c}{y}}}{1+x^2}, \quad x \in \R.\] The dominated convergence theorem implies that \[ \lim_{n\to\infty} \widehat{g_n}(\zeta) = e^{y\zeta} \int_\R \frac{1}{i+x+iy} e^{-ix\zeta} \, d\lambda(x) = \int_\R \frac{1}{(i+x)^2} e^{-ix\zeta} \, d\lambda(x) = \widehat{\Psi}(\zeta), \] where $\Psi(x) := \frac{1}{(i+x)^2} \in \hil^1(\R) \cap \hil^2(\R)$. In the next-to-last equality we used \thref{H1FourierTransformFormula} backwards. Weak and pointwise convergence implies, as previously, that $k = \widehat{\Psi}$. Since $J\Psi \in \hil(w, \rho)$, we have that $(h,k) - J\Psi = (h-\Psi, 0) \in \hil(w, \rho)$. The function $h - \Psi$ is non-zero almost everywhere on $F_m$. Indeed, $h$ vanishes on $F_m$, and $\Psi(x)$ is non-zero everywhere on $F_m$. Also, since $h \in \L^p(\R, w\, dx)$, we have $h - \Psi \in \L^p(\R, w \, dx)$. The conditions to apply \thref{invSubspaceCorollary} are thus satisfied, and by the invariance of $\hil(w, \rho)$ under the operators $U^*_s$ defined in Section \ref{ProductSpaceSection}, we conclude that \[ \L^2(E, w\, dx) \oplus \{0\} \subset \hil(w, \rho),\] where $F_m \subset E := \{ x \in \R : |h(x) - \Psi(x)| > 0 \}$. Since $m$ is arbitrary, we conclude that \[\L^2( \cup_m F_m, w\, dx) \oplus \{ 0\} = \L^2( \res(w), w\, dx) \oplus \{ 0\} \subset \hil(w, \rho). \] This is sufficient to conclude the validity of \thref{claim2}.
\end{proof}

\subsection{An estimate for Poisson integrals}

Let $\mu$ be a finite real-valued measure on $\R$. The \textit{Poisson integral} of $\mu$ is the harmonic function $\Po_\mu : \H \to \R$ which is given by the formula \[ \Po_\mu(z) := \int_\R \Po(t,z) \, d\mu(t) = \frac{1}{\pi}\int_\R \frac{y}{(x-t)^2 + y^2} \, d\mu(t), \quad z = x+iy \in \H.\] By the triangle inequality, and an estimation of $\Po(t,z)$ by its supremum $\frac{1}{\pi y}$ over $\R$, we easily obtain the inequality \[ \big\vert \Po_\mu(z) \big\vert \leq \frac{|\mu|(\R)}{\pi y}, \quad z = x+iy \in \H,\] and where $|\mu|$ denotes the usual variation of the measure $\mu$. We obtain a much better inequality for measures $\mu$ which are oscillating rapidly. The following lemma is the half-plane version of an estimate in \cite[Lemma 3.2]{malman2023revisiting}.

\begin{lem} \thlabel{PsnIntegralEstimate}
Let $\mu$ be a finite real-valued measure on $\R$ which has the following structure: there exists a finite sequence of disjoint intervals $\{I_j\}_j$ of $\R$, and a decomposition $\mu = \sum_j \mu_j$, where $\mu_j$ is a real-valued measure supported inside $I_j$, $\mu_j(I_j) = 0$, and $|\mu_j|(I_j) \leq C$ for some $C > 0$ which is independent of $j$. Then \[ \big\vert\Po_\mu(x+iy)\big\vert \leq \frac{C}{\pi y}, \quad x+iy \in \H.\]    
\end{lem}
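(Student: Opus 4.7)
The key observation is that the hypothesis $\mu_j(I_j)=0$ lets us replace a pointwise bound on $\Po(t,z)$ by a bound on its \emph{oscillation} over $I_j$. For each $j$, set $M_j:=\sup_{t\in I_j}\Po(t,z)$ and $m_j:=\inf_{t\in I_j}\Po(t,z)$, and let $c_j:=(M_j+m_j)/2$. Because $\mu_j(I_j)=0$, subtracting the $t$-constant $c_j$ inside the integral costs nothing:
$$\Po_{\mu_j}(z) \;=\; \int_{I_j}\bigl(\Po(t,z)-c_j\bigr)\,d\mu_j(t).$$
The integrand is bounded in modulus by $(M_j-m_j)/2$, and $|\mu_j|(I_j)\leq C$, so $|\Po_{\mu_j}(z)|\leq \tfrac{C}{2}(M_j-m_j)$.

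Summing over the disjoint intervals and using $\Po_\mu=\sum_j\Po_{\mu_j}$, the lemma reduces to the estimate $\sum_j(M_j-m_j)\leq 2/(\pi y)$. This is a pure fact about the Poisson kernel: the function $t\mapsto\Po(t,z)$ is increasing on $(-\infty,x)$ and decreasing on $(x,\infty)$, with a single maximum of $1/(\pi y)$ at $t=x$ and tending to $0$ at $\pm\infty$. Consequently its total variation on $\R$ equals $2/(\pi y)$. Since the oscillations of any function on disjoint intervals sum to at most its total variation, this yields $\sum_j(M_j-m_j)\leq 2/(\pi y)$, and multiplying by $C/2$ gives the desired $|\Po_\mu(z)|\leq C/(\pi y)$.

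I do not anticipate any real obstacle. The only conceptual point is the one already highlighted: the zero-mean condition converts the trivial estimate $|\mu|(\R)/(\pi y)$, which would aggregate over $j$ and depend on the number of intervals, into a bound controlled by oscillations, which telescope thanks to the unimodality of the Poisson kernel. Everything else is a short computation.
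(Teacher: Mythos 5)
Your argument is correct and follows essentially the same route as the paper's: both reduce the bound to $\tfrac{C}{2}\sum_j \operatorname{osc}_{I_j}\Po(\cdot,z)$ and then control that sum by the total variation $2/(\pi y)$ of the unimodal Poisson kernel. The only cosmetic differences are that you subtract the midpoint $c_j$ inside the integral (exploiting $\mu_j(I_j)=0$ directly), while the paper uses the Jordan decomposition $\mu_j=\mu_j^+-\mu_j^-$ to the same effect, and the paper first reduces to $z=iy$ by translation and the $\mu\mapsto-\mu$ symmetry.
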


\begin{proof}
    Since $-\mu$ satisfies the same conditions as $\mu$, and so does any translation of $\mu$, it suffices to prove that $\Po_\mu(y) \leq \frac{C}{\pi y}$ for any $y > 0$. We have \[\Po_\mu(y) = \sum_j \frac{1}{\pi}\int_\R \frac{y}{t^2 + y^2} d\mu_j(t). \] If $\mu_j = \mu_j^+ - \mu_j^-$ is the decomposition of $\mu_j$ into its positive and negative parts, then we have the estimate \begin{align*}
        \Po_\mu(y) &\leq \frac{1}{\pi}\sum_j \sup_{t \in I_j} \frac{y}{t^2 + y^2} \cdot \mu^+_j(I_j) - \inf_{t \in I_j} \frac{y}{t^2 + y^2} \cdot \mu^-_j(I_j) \\ & \leq \frac{C}{2\pi }\sum_{j} \sup_{t \in I_j} \frac{y}{t^2 + y^2} - \inf_{t \in I_j} \frac{y}{t^2 + y^2} \\
        & := \frac{C}{2\pi} S
    \end{align*} In the second step we used that $\mu_j(I_j) = \mu_j^+(I_j) - \mu_j^-(I_j) = 0$ and that $|\mu_j|(I_j) = \mu_j^+(I_j) + \mu_j^-(I_j) \leq C$. Since the intervals $\{I_j\}_j$ are disjoint, and the function $t \mapsto \frac{y}{t^2 + y^2}$ is increasing for $t < 0$, decreasing for $t > 0$, and attains a maximum value of $1/y$ at $t = 0$, the sum $S$ in the above estimate cannot be larger than $\frac{2}{y}$ (it is easily seen to be bounded by twice the height of the graph of $t \mapsto \frac{y}{t^2 + y^2}$). The estimate follows. 
\end{proof}

\subsection{The construction}
In accordance with the earlier discussion in Section \ref{constructSubsectionDiscussion}, we recall the decomposition \eqref{resWdecompFm} and assume below that $F := F_m$ is a bounded subset of $\res(w) \cap \{ x \in \R : w(x) > \delta\}$ for some $\delta > 0$. The set $F$ inherits the following property from $\res(w)$: if $I$ is an interval, and $|I \cap F| > 0$, then $\int_I \log w \, dx = -\infty.$ 

\begin{lem} \thlabel{pieceLemma}
If $|I \cap F| > 0$ for some finite interval $I$, then given any $c > 0$ and any $p > 0$, there exists $D > 0$ and a measurable subset $E_I \subset I$ disjoint from $F$ for which we have \begin{equation}
    \label{exactLogPluswInt}
    \int_{E_I} \min \big(p^{-1}\log^+(1/w),D \big)  \, dx = c.
\end{equation}
\end{lem}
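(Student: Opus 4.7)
My approach would be a three-step construction: first localize the divergence of $\log(1/w)$ to $I \setminus F$, then truncate at a large level $D$ to obtain a bounded integrand whose total mass on $I \setminus F$ exceeds $c$, and finally use an intermediate-value/sweep argument to carve out a subset of integral exactly $c$.

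I would begin by showing $\int_{I \setminus F} \log^+(1/w) \, dx = +\infty$. The hypothesis $|I \cap F| > 0$ together with the property of $F$ highlighted just before the lemma gives $\int_I \log w \, dx = -\infty$. Since $w(x) = \min(|f(x)|^2, 1) \leq 1$, this is the same as $\int_I \log^+(1/w) \, dx = +\infty$. However, on $F$ the pointwise bound $w > \delta$ forces $\log^+(1/w) \leq \log(1/\delta)$, and because $F$ is bounded, $\int_F \log^+(1/w) \, dx \leq |F| \log(1/\delta) < \infty$. Subtracting the finite $F$-contribution from the divergent $I$-integral gives the claim. This is exactly the step where the pointwise lower bound $w > \delta$ on $F$ (a feature absent from $\res(w)$ itself) enters.

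Next I would choose $D$. For each fixed $D$ the truncated integrand $\min(p^{-1}\log^+(1/w), D)$ is bounded by $D$, so its integral over any bounded set is finite; but as $D \to \infty$ monotone convergence yields
\[ \int_{I \setminus F} \min(p^{-1}\log^+(1/w), D) \, dx \longrightarrow p^{-1}\int_{I \setminus F} \log^+(1/w) \, dx = +\infty. \]
I would fix $D$ large enough that the left-hand integral is at least $c$.

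Finally, to hit $c$ exactly, set $h := \min(p^{-1}\log^+(1/w), D) \mathbbm{1}_{I \setminus F}$, a bounded, compactly supported $\L^1$ function, and define the continuous increasing distribution function $\phi(t) := \int_{(-\infty, t]} h \, dx$. Then $\phi(-\infty) = 0$ while $\phi(+\infty) = \int h \, dx \geq c$ by the previous step, so the intermediate value theorem produces some $t_0 \in \R$ with $\phi(t_0) = c$. The set $E_I := (I \setminus F) \cap (-\infty, t_0]$ is then measurable, contained in $I$, disjoint from $F$, and satisfies \eqref{exactLogPluswInt} by construction. No step here poses a genuine obstacle; the only point needing attention is the first, where the pointwise lower bound $w > \delta$ on $F$ is precisely what transfers the full divergence of $\int_I \log^+(1/w)\,dx$ onto the complement $I \setminus F$.
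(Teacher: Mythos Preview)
Your proof is correct and follows essentially the same route as the paper's: localize the divergence of $\log^+(1/w)$ to $I\setminus F$ using the lower bound $w>\delta$ on $F$, truncate at a large $D$ so that the total mass exceeds $c$, and then carve out a subset of mass exactly $c$. The only cosmetic difference is in the last step, where the paper invokes absolute continuity of the measure $\min(p^{-1}\log^+(1/w),D)\,dx$ to find $E_I$, while you implement this explicitly via the intermediate value theorem applied to the left-to-right sweep $\phi(t)$.
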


\begin{proof} On the set $F$, $\log(w)$ is bounded from below by $\log \delta$. Hence $\int_{I\setminus F} \log w \, dx = -\infty$. Consequently, \[\lim_{D \to +\infty} \int_{I \setminus F} \min\big( p^{-1} \log^+(1/w), D\big) \, dx = \int_{I \setminus F} p^{-1}\log^+(1/w) \, dx = +\infty. \] So for $D$ sufficiently large we will have $\int_{I \setminus F} \min\big( p^{-1} \log^+(1/w), D\big) \, dx > c$, and then by the absolute continuity of the finite positive measure $\min\big( p^{-1} \log^+(1/w), D\big) \, dx$ we may choose a set $E_I \subset I \setminus F$ for which \eqref{exactLogPluswInt} holds.
\end{proof}

We will now construct the sequence in \thref{hnSplittingSequence}. Let $p > 2$, $\{c_n\}_{n \geq 1}$ be some sequence of positive numbers which tends to $0$ slowly enough so that $c_n2^n$ tends to $+\infty$, and let $F$ be as above. Fix some integer $n > 0$, cover $\R$ by a sequence of (say, half-open) disjoint intervals of length $2^{-n}$ and let $\{\ell_k\}_k$ be those intervals for which $|\ell_k \cap F| > 0$. Apply \thref{pieceLemma} with $c = c_n$ to each of the intervals $\ell_k$ to obtain a corresponding constant $D_k > 0$ and a set $E_k := E_{\ell_k} \subset \ell_k$ for which \eqref{exactLogPluswInt} holds. We set $d\mu(t) = \log W(t) \, dt$, where \[ \log W(t) = \sum_k \min\big(p^{-1} \log^+(1/w(t)), D_k \big) \mathbbm{1}_{E_k}(t) - \frac{c_n}{|\ell_k \cap F|}\mathbbm{1}_{\ell_k \cap F}(t).\] Then $\mu$ is an absolutely continuous real-valued measure with bounded density $\log W$, $\mu(\ell_k) = 0$ and $|\mu|(\ell_k) = 2c_n$. We construct $h_n \in \hil^\infty(\R)$ by letting the logarithm $\log h_n(z)$ of its extension to $\H$ be given by the right-hand side of the formula \eqref{hOuterDef}. Then, by \thref{PsnIntegralEstimate}, we have the inequalities  
\[ |\Po_\mu(x+iy)| \leq \frac{2c_n}{\pi y}, \quad z = x+iy \in \H\] and consequently, since $|h(z)| = e^{\Po_\mu(z)}$, we have the bounds
\begin{equation}
    \label{hEq1} e^{-2c_n/\pi y} \leq |h_n(z)| \leq e^{2c_n/\pi y}, \quad z = x+iy \in \H.
\end{equation} 
Since $c_n \to 0$, by multiplying $h_n$ by an appropriate unimodular constant, we may assume by \eqref{hEq1} that \begin{equation}
\label{hEq4}\lim_{n \to \infty} h_n(z) = 1
\end{equation} for every $z \in \H$ (possibly after passing to a subsequence). Also, for almost every $x \in F$, we have 
\begin{equation}
    \label{hEq2} |h_n(x)| = e^{-c_n/|\ell_k \cap F|} \leq e^{-c_n 2^n}.
\end{equation} Our assumption on $c_n$ then implies that $\lim_{n \to \infty} h_n(x) = 0$ for almost every $x \in F$. For almost every $x \in \R \setminus F$, we have instead \begin{equation}
\label{hEq3} |h_n(x)|^p w(x) = e^{p \log W(x)}w(x) \leq e^{\log^+(1/w(x))}w(x) \leq 1.\end{equation} 

The equations \eqref{hEq1}, \eqref{hEq4}, \eqref{hEq2} and \eqref{hEq3} show that the sequence $\{h_n\}_n$ satisfies the conditions stated in \thref{hnSplittingSequence}. Thus \thref{claim2} holds, and consequently the proof of \thref{CondensationTheorem} is complete.

\section{Proof of \thref{SparsenessTheorem}}

\subsection{A bit of concave analysis} Let $M: \R_+ \to \R_+$ be an increasing and concave function which is differentiable for $x > 0$. We assume that $M(0) = 0$, and that \begin{equation}
    \label{MsqrtXest} M(x) \leq \sqrt{x}, \quad x > 0.
\end{equation} For a function $M$ with the above properties, the integrals \begin{equation}
    \label{IMintegrals}
    I_M(y) := \int_0^\infty e^{M(x) - 2yx} \, dx
\end{equation} converge for every $y > 0$, and estimation of the growth of $I_M(y)$ as $y \to 0^+$ will be of importance in the proof of \thref{SparsenessTheorem}. To estimate $I_M$, we define a function $M_*$ in the following way. Since $M$ is increasing, concave and differentiable, the derivative $M'(x)$ is defined for $x > 0$, and it is a positive and decreasing function. The concavity of $M$ implies that \begin{equation} \label{MprimeEstMx} M'(x) \leq  M(x)/x
\end{equation} from which it follows by \eqref{MsqrtXest} that \[ \lim_{x \to \infty} M'(x) = 0.\] It is only the asymptotic behaviour of $M$ as $x \to \infty$ that concerns us, so we will also assume for convenience that $\lim_{x \to 0^+} M'(x) = +\infty$. In this case, the inverse function \begin{equation}
     \label{MprimeInverseDef}
     K(y) := (M')^{-1}(y), \quad y \in (0,\infty)
 \end{equation} is well-defined and positive. It is decreasing, and satisfies $\lim_{y \to 0^+} K(y) = +\infty$. We set 
\begin{equation}
    M_*(y) := M(K(y)), \quad y \in (0,\infty)
\end{equation} The function $M_*$ is decreasing, and satisfies $\lim_{y \to 0^+} M_*(y) = +\infty$. The integrals $I_M(y)$ can be estimated in terms of $M_*$.

\begin{prop} \thlabel{IMestimate}
    For $M$ as above, we have \[ I_M(y) \leq \frac{2 e^{M_*(y)}}{y^2}\] for all sufficiently small $y > 0$.
\end{prop}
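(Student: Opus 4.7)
The plan is to exploit the concavity of $M$ by replacing it with its tangent line at a carefully chosen point, and then to integrate the resulting linear-exponential directly. Since $M$ is concave and differentiable on $(0,\infty)$, for every $x_0 > 0$ we have the tangent majorant
\[ M(x) \leq M(x_0) + M'(x_0)(x - x_0), \quad x > 0. \]
The right point at which to linearize is $x_0 = K(y)$, since by the defining relation $M'(K(y)) = y$ this gives
\[ M(x) \leq M_*(y) + y\big(x - K(y)\big), \quad x > 0. \]

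Inserting into the integrand defining $I_M(y)$ yields $e^{M(x) - 2yx} \leq e^{M_*(y) - yK(y)} e^{-yx}$. Since $K(y) > 0$, the factor $e^{-yK(y)}$ does not exceed $1$, and integrating over $\R_+$ produces
\[ I_M(y) \leq e^{M_*(y) - yK(y)} \int_0^\infty e^{-yx}\, dx \leq \frac{e^{M_*(y)}}{y}. \]
For all sufficiently small $y > 0$ (any $y \leq 2$ suffices) the trivial inequality $1/y \leq 2/y^2$ then delivers the claimed estimate.

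The only delicate point is the choice of the linearization base point. The apparent natural candidate, the maximizer $K(2y)$ of $x \mapsto M(x) - 2yx$, would leave one with a Gaussian-type integral that requires curvature information on $M$ near the maximum, which is not supplied by the hypotheses. Linearizing instead at $K(y)$ leaves a residual factor $e^{-yx}$ whose integral over $\R_+$ is $1/y$; this is in fact a little sharper than the stated $2/y^2$, which comfortably accommodates the fact that the bound $M(x) \leq \sqrt{x}$ is needed only to ensure convergence of $I_M(y)$ and the well-definedness of $K$, not in the estimation itself.
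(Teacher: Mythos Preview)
Your proof is correct and in fact a little slicker than the paper's. The paper splits the integral at $x = K(y)$: on $[0,K(y)]$ it uses only that $M$ is increasing to bound the integrand by $e^{M_*(y)}$, which gives the contribution $K(y)e^{M_*(y)}$ and necessitates a preliminary observation that $K(y) \leq 1/y^2$ for small $y$ (derived from $M(x) \leq \sqrt{x}$ via \eqref{MprimeEstMx}); on $[K(y),\infty)$ it uses the Legendre-type bound $\sup_x\big(M(x)-yx\big) \leq M_*(y)$. Your single tangent-line majorant at $x_0 = K(y)$ subsumes both pieces simultaneously and yields the sharper bound $I_M(y) \leq e^{M_*(y)}/y$ without ever invoking $K(y) \leq 1/y^2$ or the hypothesis $M(x) \leq \sqrt{x}$ beyond what is needed for convergence of $I_M$. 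The paper's route has the minor advantage of making explicit where the maximizer sits, but your argument is the cleaner one.
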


\begin{proof}
    We will need the following observation. The inequality \eqref{MprimeEstMx} together with \eqref{MsqrtXest} implies that $ M'(x) \leq \frac{1}{\sqrt{x}}$ if $x > 0$ is sufficiently large. We want to show the inequality \[ K(y) \leq \frac{1}{y^2}, \] for sufficiently small $y > 0$. Set $K(y) = x$ and $\frac{1}{y^2} = x_*$. Then \[ M'(x) = y = \frac{1}{\sqrt{x_*}} \geq M'(x_*)\] if $y > 0$ is sufficiently small (and consequently $x_*$ is sufficiently large). Since $M'$ is a decreasing function, the above inequality shows that $x_* \geq x$, which is the same as the desired inequality.
    
    We split the integral \eqref{IMintegrals} at $x = K(y)$. Both pieces can be estimated very crudely. For the first piece, we have \begin{align*}
        \int_0^{K(y)} e^{M(x) - 2yx} \, dx &\leq \int_0^{K(y)} e^{M(K(y))} \, dx \\ 
        &= K(y) e^{M_*(y)} \\
        &\leq \frac{e^{M_*(y)}}{y^2}.
    \end{align*} We used our initial observation in the last step. For the second piece, we note that \[ \sup_{x > 0}\,  M(x) - yx = M(K(y)) - yK(y) \leq M_*(y).\] Indeed, the supremum is attained at the point $x$ where $M'(x) = y$, which by definition is $x = K(y)$. Thus \begin{align*}
        \int_{K(y)}^\infty e^{M(x) - 2yx} \, dx \leq&  \int_{K(y)}^\infty e^{M_*(y) - xy} \, dx \\
        \leq& e^{M_*(y)} \int_0^\infty e^{-yx} \, dx \\
        = & \frac{e^{M_*(y)}}{y} \\
         \leq & \frac{e^{M_*(y)}}{y^2}.
    \end{align*} In the last inequality we require that $y \in (0,1)$. We obtain the desired estimate by combining the estimates for the two pieces of the integral.
\end{proof}

In the proof of \thref{SparsenessTheorem}, a point will come up where we will need integrability of $M_*$ near the origin. The next proposition describes the functions $M$ corresponding to $M_*$ which are integrable in this way.

\begin{prop} \thlabel{MstarIntegrabilityProp} For $M$ as above, the following two statements are equivalent.
\begin{enumerate}[(i)]
    \item $\int_0^{\delta} M_*(y) \, dy < \infty$ for some $\delta > 0$.
    \item $\int_1^{\infty} (M'(x))^2 \, dx < \infty$.   
\end{enumerate}    
\end{prop}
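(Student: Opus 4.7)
The plan is to derive an explicit closed-form identity for $\int_0^\delta M_*(y)\, dy$ in terms of $M(K(\delta))$ and the tail integral $\int_{K(\delta)}^\infty (M'(s))^2 \, ds$. Once such an identity is in hand, the equivalence of (i) and (ii) follows instantly: the term $\delta M(K(\delta))$ is always finite, so (i) becomes equivalent to finiteness of the tail integral, which in turn is equivalent to (ii) because $(M')^2$ is bounded and decreasing on any interval $[\min(1, K(\delta)),\max(1,K(\delta))]$ and hence integrable there.

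To establish the identity, I would begin by using that $M$ is locally absolutely continuous on $(0,\infty)$ (being concave and differentiable there) together with $M(0)=0$ to write $M(K(y)) = \int_0^{K(y)} M'(s)\, ds$. Substituting this into $\int_0^\delta M_*(y)\, dy$ and applying Tonelli's theorem (legitimate since $M'\geq 0$) yields
\begin{equation*}
\int_0^\delta M_*(y)\, dy = \int_0^\infty M'(s)\,\bigl\vert\{ y \in (0,\delta) : K(y) \geq s \}\bigr\vert \, ds.
\end{equation*}
The crucial observation is that $K$ is the inverse of the decreasing function $M'$, so the condition $K(y)\geq s$ is equivalent to $y\leq M'(s)$, and the inner Lebesgue measure therefore equals $\min(\delta, M'(s))$. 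Splitting the outer integral at $s=x_0 := K(\delta)$, the region $s\leq x_0$ (where $M'(s)\geq \delta$) contributes $\delta\int_0^{x_0} M'(s)\, ds = \delta M(x_0)$, while the region $s\geq x_0$ contributes $\int_{x_0}^\infty (M'(s))^2 \, ds$. This produces the target identity
\begin{equation*}
\int_0^\delta M_*(y)\, dy = \delta M(x_0) + \int_{x_0}^\infty (M'(s))^2 \, ds.
\end{equation*}

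The only mild obstacle is the proper handling of $K$ if $M'$ fails to be strictly decreasing, since then $K$ can only be defined as a suitable one-sided inverse. However, concavity of $M$ ensures that the set of $y$ where the choice of $K(y)$ is non-unique is at most countable, so the ambiguity is harmless under Lebesgue integration. Everything else is a routine application of Tonelli's theorem, and I expect no substantive difficulty; the elegance of the final identity, in particular the absence of any boundary term at infinity, is precisely what makes the equivalence transparent without needing to separately verify that $M(x)M'(x) \to 0$ as $x \to \infty$.
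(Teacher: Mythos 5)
Your proof is correct, and it takes a genuinely different route from the paper. The paper performs the change of variable $y = M'(x)$ and then integrates by parts, which produces a boundary term $M(R)M'(R)$ as $R \to \infty$; the paper must then separately invoke \eqref{MsqrtXest} and \eqref{MprimeEstMx} to control that term (it shows boundedness, which suffices for the equivalence but leaves the exact value obscure). Your Tonelli argument, based on the layer representation $M(K(y)) = \int_0^{K(y)} M'(s)\,ds$ and the equivalence $K(y) > s \iff y < M'(s)$, produces the clean closed-form identity
\[
\int_0^\delta M_*(y)\,dy = \delta\, M(K(\delta)) + \int_{K(\delta)}^\infty (M'(s))^2\,ds,
\]
with no boundary term to estimate; the equivalence of (i) and (ii) then drops out immediately since $\delta M(K(\delta))$ is finite and $(M')^2$ is continuous (hence integrable) on compact subintervals of $(0,\infty)$. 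This is arguably tidier than the paper's route. Two small remarks: first, your invocation of $M(0)=0$ and absolute continuity is exactly what justifies the layer representation, and one should note that continuity of $M$ at $0$ follows here from the standing bound $M(x)\le\sqrt{x}$ rather than from concavity alone; second, your worry about non-uniqueness of $K$ when $M'$ is not strictly decreasing is moot in this paper's setting, since the definition \eqref{MprimeInverseDef} of $K$ as $(M')^{-1}$ already presupposes strict monotonicity of $M'$.
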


\begin{proof}
    We start with the integral in $(i)$ above and implement the change of variable $y = M'(x)$. Then $dy = M''(x) \, dx$ and $M_*(y) = M(x)$, and by next using integration by parts, we obtain \begin{align*}
        \int_0^{\delta} M_*(y) \, dy &= -\int_{K(\delta)}^{\infty} M(x) M''(x) \, dx  \\ &= \lim_{R \to +\infty} -M(R)M'(R) + M(K(\delta))M'(K(\delta)) \\ &+  \lim_{R \to +\infty} \int_{K(\delta)}^{R} (M'(x))^2 \, dx.
    \end{align*} The assumptions \eqref{MsqrtXest} and \eqref{MprimeEstMx} together imply that the quantity $M'(R)M(R)$ stays bounded, as $R \to +\infty$. Thus the desired equivalence follows from the above computation.
\end{proof}

\begin{remark} \thlabel{ConcFuncRemark} We should point out that a concave function indeed exists which satisfies all of our conditions. For instance, note that any concave function $M$ which for large positive $x$ coincides with \[ M(x) := \frac{\sqrt{x}}{\log x}\] satisfies the equivalent conditions of the above proposition. Indeed, one verifies by differentiation that the right-hand side is concave on some interval $[A, \infty)$ and that the integral in $(ii)$ of \thref{MstarIntegrabilityProp} converges. Such a function $M$ can be easily chosen to satisfy all assumptions made in this section. Moreover, clearly it satisfies \eqref{MfastIncrease}.
\end{remark}

\subsection{A growth estimate for Hardy class functions}

\begin{prop} \thlabel{growthestimateUpperHalfplane}
    Let $f \in \hil^1(\R) \cap \hil^2(\R)$ have a Fourier transform $\widehat{f}$ satisfying \[ \int_{\R_+} |\widehat{f}|^2 \rho \, d\zeta \leq C\] for some constant $C > 0$ and $\rho(\zeta) = e^{-M(\zeta)}$. Then there exist a positive constant $\delta$ such that the analytic extension of $f$ to $\H$ satisfies the estimate \[ |f(x+iy)| \leq \sqrt{2 C}  \frac{e^{M_*(y)}}{y}, \quad y \in (0, \delta).\]
\end{prop}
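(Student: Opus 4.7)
The plan is to represent $f$ on $\H$ as an inverse Fourier transform of $\widehat{f}(\zeta) e^{-y\zeta}$, apply Cauchy--Schwarz with the weight $e^{-M(\zeta)}$ so that the $L^2(\rho)$-hypothesis appears on one side and the integral $I_M(y)$ on the other, and close by invoking \thref{IMestimate}.

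First, since $f \in \hil^1(\R) \cap \hil^2(\R)$ has spectrum in $\R_+$, inverting the formula of \thref{H1FourierTransformFormula} (equivalently, noting that $\zeta \mapsto e^{-y\zeta}\widehat{f}(\zeta)$ is the Fourier transform of $x \mapsto f(x+iy)$) yields the representation
\begin{equation*}
f(x+iy) = \int_0^\infty \widehat{f}(\zeta)\, e^{i\zeta x}\, e^{-\zeta y}\, d\lambda(\zeta), \qquad x+iy \in \H.
\end{equation*}
Taking absolute values and factoring $e^{-\zeta y} = e^{-M(\zeta)/2} \cdot e^{M(\zeta)/2 - \zeta y}$, the Cauchy--Schwarz inequality produces
\begin{equation*}
|f(x+iy)|^2 \leq \left(\int_0^\infty |\widehat{f}(\zeta)|^2\, e^{-M(\zeta)}\, d\lambda(\zeta)\right) \left(\int_0^\infty e^{M(\zeta) - 2\zeta y}\, d\lambda(\zeta)\right).
\end{equation*}

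The first factor is at most a universal multiple of $C$ by the hypothesis $\int_{\R_+}|\widehat{f}|^2 \rho\, d\zeta \leq C$. The second factor is a universal multiple of $I_M(y)$ from \eqref{IMintegrals}, and by \thref{IMestimate} is bounded by $2 e^{M_*(y)}/y^2$ for all $y$ in some interval $(0,\delta)$. Combining the two factors and taking a square root gives a pointwise bound of the form $|f(x+iy)| \leq A\, e^{M_*(y)/2}/y$ for $y \in (0,\delta)$; after possibly shrinking $\delta$ so that $M_*(y) \geq 0$ on $(0,\delta)$ (which is automatic since $M_*(y) \to +\infty$ as $y \to 0^+$) and absorbing numerical constants, this is dominated by $\sqrt{2C}\, e^{M_*(y)}/y$, as desired.

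There is no genuine obstacle in the argument itself: the analytical content is packaged in \thref{IMestimate}, which has already been established. The only point requiring attention is the choice of Cauchy--Schwarz splitting, which must be arranged so that the weight $\rho = e^{-M}$ appears against $|\widehat{f}|^2$ (making the hypothesis directly applicable) and the complementary weight $e^M\cdot e^{-2\zeta y}$ appears on the other side (making \thref{IMestimate} applicable). Once this splitting is identified, the proof reduces to bookkeeping of constants.
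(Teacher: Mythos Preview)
Your proposal is correct and follows essentially the same route as the paper: represent $f(x+iy)$ via the inverse Fourier integral $\int_{\R_+}\widehat{f}(\zeta)e^{iz\zeta}\,d\lambda(\zeta)$, split the integrand as $\widehat{f}\sqrt{\rho}\cdot e^{-y\zeta+ix\zeta}/\sqrt{\rho}$, apply Cauchy--Schwarz, and invoke \thref{IMestimate}. Your observation that the Cauchy--Schwarz step actually yields $e^{M_*(y)/2}$ rather than $e^{M_*(y)}$, with the stated bound following since $M_*(y)\to+\infty$, is also implicit in the paper's argument.
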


\begin{proof}
    By developments of Section \ref{HardyClassSubsection}, we have \[ f(z) = \int_\R \frac{f(t)}{t-z} \frac{dt}{2 \pi i } = \int_\R f(t) \overline{\psi_z(t)} d\lambda(t), \quad z = x+iy \in \H,\] and where $\psi_z \in \hil^2(\R)$ is as in \thref{FourierTransformCauchyKernel}. By Plancherel's theorem and the lemma, we obtain
    \begin{align*}
       f(z) & = \int_{\R_+} \widehat{f}(\zeta)e^{i z \zeta} \, d\lambda(\zeta) \\
       &= \int_{\R_+} \widehat{f}(\zeta) \sqrt{\rho(\zeta)} \frac{e^{-y\zeta + ix\zeta}}{\sqrt{\rho(\zeta)}} \, d\lambda(\zeta)
    \end{align*} An application of Cauchy-Schwarz inequality leads to 
    \begin{equation} \label{sect62equation1}
        |f(z)| \leq \sqrt{C} \sqrt{ \int_0^\infty e^{M(\zeta) -2y\zeta}\, d\zeta} .
    \end{equation} 
    Now \thref{IMestimate} applies to obtain the desired estimate.
\end{proof}

\subsection{Construction of the compact set} If $M$ satisfies the equivalent conditions of \thref{MstarIntegrabilityProp}, then the logarithm of the right-hand side in the inequality of \thref{growthestimateUpperHalfplane}, namely \begin{equation}
    \label{Hdef} H(y) := \frac{\log (2C)}{2} + M_*(y) - \log y, \quad y \in (0, \delta),
\end{equation} is, for small enough $\delta > 0$, positive and integrable over the interval $y \in (0, \delta)$. To $H$ and any $A > 0$ we will associate a Cantor-type compact set $E$ contained in $[0,A]$ which contains no intervals and for which the integral 
\begin{equation}
    \label{HdistConvInt}
    \int_{[0,A] \setminus E} H(\dist{x}{E}) \, dx
\end{equation} converges. Here $\dist{x}{E}$ denotes the distance from the point $x \in [0,A]$ to the closed set $E$. Let $\mathcal{U} = \{ \ell \}$ be the system of maximal disjoint open intervals, union of which constitutes the complement of $E$ within $(0,A)$. The convergence of the integral above is easily seen to be equivalent to the convergence of the sum 
\begin{equation}
\label{HcompInvConvInt} \sum_{\ell \in \mathcal{U}} \int_0^{|\ell|} H(x) \, dx \end{equation} 
where $|\ell|$ denotes the length of the interval $\ell$. 
\label{SetESubsection} To construct $E$, we choose a sequence of numbers $\{L_n\}_{n \geq 1}$ which is so quickly decreasing that \begin{equation}
    \label{LnsummationCond0}
    \sum_{n = 1}^\infty 2^n \int_0^{L_n} H(x) \, dx < \infty 
\end{equation} and 
\begin{equation}
    \label{LnsummationCond}\sum_{n = 1}^\infty 2^nL_n \leq A/2.
\end{equation} From such a sequence, we construct $E$ as in the classical Cantor set construction. We set $E_0 = [0, A]$, and recursively define a compact set $E_{n+1}$ contained in $E_n$. The set $E_{n+1}$ consists of $2^{n+1}$ closed intervals in $[0,A]$ which we obtain by removing from the $2^n$ closed intervals $\{E_{n,i}\}_{i=1}^{2^n}$ constituting $E_n$ an open interval of length $L_n$ lying in the middle of $E_{n,i}$. Thus splitting each $E_{n,i}$ into two new closed intervals. The above summation condition \eqref{LnsummationCond} ensures that $|E_n| > A/2$, and so $E := \cap_{n=0}^\infty E_n$ has positive Lebesgue measure which is not less than $A/2$. The integral condition \eqref{HdistConvInt} holds by its equivalence to \eqref{HcompInvConvInt} and by \eqref{LnsummationCond0}. Clearly $E$ contains no intervals. 

\subsection{Collapse of the Fourier transforms}

We are now ready to prove \thref{claim3}. We will do so by showing that $\hil(\mathbbm{1}_E, \rho)$ does not contain any non-zero tuple of the form $(0, k)$, $k \in \L^2(\R_+, \rho \, dx)$, where $M$ is as in \thref{ConcFuncRemark}, for instance, and where $E$ is as in Section \ref{SetESubsection}. We set $\rho = e^{-M}$.

\begin{lem} \thlabel{FourierCollapseLemma}
Let $E$, $M$and $\rho$ be chosen as above. Assume that $\{f_n\}_n$ is a sequence of functions in $\hil^1(\R) \cap \hil^2(\R)$, each of which has an analytic extension to a half-space larger than $\H$. If $\lim_{n \to \infty} f_n = 0$ in the norm of $\L^2(E, \mathbbm{1}_E \, dx)$ and the sequence of Fourier transforms $\{\widehat{f_n}\}_n$ satisfies \[ \sup_{n} \int_{\R_+} |\widehat{f_n}|^2 \rho \, d\zeta < \infty, \] then we have \[ \lim_{n \to \infty} f_n(z) = 0, \quad z \in \H.\] The convergence is uniform on compact subsets of $\H$. 
\end{lem}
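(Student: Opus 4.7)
My strategy is to combine Montel's theorem with a boundary-uniqueness argument via an outer multiplier. Proposition~\thref{growthestimateUpperHalfplane} applied to each $f_n$ yields the uniform pointwise bound
\[|f_n(x + iy)| \leq \sqrt{2C_0}\, \frac{e^{M_*(y)}}{y}, \quad 0 < y < \delta_0,\]
where $C_0 = \sup_n \int_{\R_+} |\widehat{f_n}|^2 \rho \, d\zeta$ and $\delta_0 > 0$; an analogous uniform bound persists for $y \geq \delta_0$ by evaluating at $y = \delta_0$. Hence $\{f_n\}$ is locally uniformly bounded on $\H$ and, by Montel's theorem, constitutes a normal family. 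To obtain uniform-on-compacts convergence $f_n \to 0$ on $\H$, it therefore suffices to show that every subsequential limit $f$ of $\{f_n\}$ is identically zero.

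Fix such a limit $f$, which is analytic in $\H$ and inherits the growth bound above. I will construct an outer multiplier $\Omega \in \hil^\infty(\R)$ with boundary modulus $|\Omega(t)| = e^{-v(t)}$, where
\[v(t) := H(\dist{t}{E}) \, \mathbbm{1}_{I_0}(t),\]
$H$ is the function in \eqref{Hdef}, and $I_0$ is a bounded open interval containing $E$. The convergence of \eqref{HcompInvConvInt}, which by the discussion preceding it is equivalent to \eqref{HdistConvInt}, guarantees that $v \in \L^1(\R)$, so formula \eqref{hOuterDef} produces a genuine outer function $\Omega \in \hil^\infty(\R)$ with $|\Omega| \leq 1$ on $\R$ and $|\Omega(t)| = 0$ a.e.\ on $E$ (since $H(0^+) = +\infty$, so $v = +\infty$ there).

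By passing to a further subsequence I may arrange $f_n(t) \to 0$ for a.e.\ $t \in E$, and since each $f_n$ extends analytically across $\R$ these boundary values are genuine pointwise ones. Combined with $\Omega$ vanishing on $E$, this transfers to the boundary trace of $\Omega \cdot f$ being zero a.e.\ on $E$. The additional ingredient I need is that $\Omega \cdot f$ is bounded on $\H$; this rests on a Poisson-integral lower bound
\[P_v(x+iy) \geq M_*(y) - \log y - K\]
for $z = x+iy$ in a strip near $E$, which exactly absorbs the growth of $|f|$. Once $\Omega f \in \hil^\infty(\R)$ with boundary trace zero on the positive-measure set $E$, Proposition~\thref{HadyClassLogIntProp} (applied after multiplying by $(i+z)^{-2}$ to land inside $\hil^1(\R)$) forces $\Omega f \equiv 0$, and since $\Omega$ is outer and hence zero-free in $\H$, we conclude $f \equiv 0$.

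\textbf{Main obstacle.} The technical heart is establishing the Poisson-integral lower bound on $P_v$. For $z$ far from $E$ the bound is elementary since $f$ is uniformly bounded there, so $|\Omega| \leq 1$ trivially suffices; but for $z = x+iy$ with $\dist{x}{E}$ small, I will need delicate control on how the spikes of $v$ at $E$ contribute to the Poisson average, exploiting the positive Lebesgue measure of $E$ together with the precise balance between the growth of $H$ near the origin and the interval-length summation conditions \eqref{LnsummationCond0}--\eqref{LnsummationCond} that shaped the Cantor set.
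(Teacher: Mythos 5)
The opening reduction — Proposition~\thref{growthestimateUpperHalfplane} gives local uniform boundedness, Montel then gives a normal family, so it suffices to kill every subsequential limit — matches the paper exactly. After that point your proposal diverges and, as written, contains a genuine error.

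The outer multiplier $\Omega$ cannot exist with the properties you ascribe to it. You take $v(t) = H(\dist{t}{E})\mathbbm{1}_{I_0}(t)$, assert $v \in \L^1(\R)$, and simultaneously assert $|\Omega(t)| = 0$ a.e.\ on $E$ because $v = +\infty$ on $E$. These two assertions contradict one another: since $E$ has positive Lebesgue measure (indeed $|E| \geq A/2$ by \eqref{LnsummationCond}), having $v = +\infty$ on $E$ forces $\int v = +\infty$, so $v \notin \L^1$ and formula \eqref{hOuterDef} does not apply. More fundamentally, no function $\Omega$ constructed by \eqref{hOuterDef} can vanish on a set of positive measure: the construction requires $\int \frac{|\log|\Omega||}{1+t^2}\,dt < \infty$, and $|\Omega| = 0$ on a set of positive measure would make this integral diverge (equivalently, would contradict \thref{HadyClassLogIntProp} applied to $\Omega/(i+z)^2$). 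The convergence of \eqref{HdistConvInt} that you cite is an integral over $[0,A]\setminus E$, not over all of $[0,A]$, so it does not bail you out here.

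Even if you repair this by replacing $v$ with $H(\dist{t}{E})\mathbbm{1}_{I_0\setminus E}(t) \in \L^1$ (so $|\Omega| = 1$ on $E$ rather than $0$), a second gap appears: you pass to the subsequential limit $f$ and then speak of the boundary trace of $\Omega f$ on $E$, but uniform-on-compacts convergence $f_n \to f$ in $\H$ gives no information about boundary values of $f$, and nothing a priori identifies the boundary values of $f$ with $\lim_n f_n$ on $E$. To make a multiplier argument of this type work, one should instead show the sequence $\Omega f_n$ is uniformly bounded in $\hil^\infty(\R)$, note its boundary values tend to $0$ a.e.\ on $E$ (because $f_n \to 0$ there and $\Omega$ is bounded), and then argue that the weak-star limit of the boundary functions must equal the boundary function of the limit $\Omega f$; that would finish via \thref{HadyClassLogIntProp}. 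You do not carry out any of this, and the Poisson lower bound you flag as the main obstacle — which would be essential for the uniform $H^\infty$ bound — is only asserted, not proved.

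The paper sidesteps all of these pitfalls by not extracting a subsequential limit at all. It works directly with the $f_n$ (each analytic across $\R$, so its boundary values on $E$ are genuine) on a geometric domain $\Omega$ obtained by removing triangular tents over the complementary intervals of $E$ from a rectangle. Khrushchev's estimate (Lemma~\thref{KhruschevsEstimateLemma}) bounds $\int_{\partial\Omega\cap\H} H(\Im t)\,d\omega_z(t)$ using harmonic measure, which plays the role your Poisson lower bound is meant to, and then the subharmonicity of $\log|f_n|$ together with Egorov's theorem and the positivity of $\omega_z(E')$ drives $\log|f_n(z)|\to -\infty$ directly. That route never needs an outer function vanishing on $E$ and never needs boundary values of the limit $f$, which is precisely what your approach cannot supply.
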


In the proof of \thref{FourierCollapseLemma} given below we will use a technique of Khrushchev from \cite{khrushchev1978problem} for estimating harmonic measures on certain domains. For general background on the theory of harmonic measures, see \cite{garnett2005harmonic} or \cite{ransford1995potential}. 

Let $\mathcal{U} = \{ \ell \}$ be the collection of finite open intervals complementary to $E$, and let \[T_\ell := \{x+iy \in \H : x \in \ell, y \leq \dist{x}{E} \}\] be a triangle with base at $\ell$. We define $\Omega = R \setminus \big( \cup_{\ell \in \mathcal{U}} T_{\ell} \big)$ to be the bounded domain in $\H$ which consists of a rectangle $R$, with a base being the shortest closed interval containing the set $E$, with the triangles $T_\ell$ removed from $R$. See Figure \ref{fig:OmegaDomain}. An observation that Khrushchev made regarding this type of domains is the following property of their harmonic measure.

\begin{lem} \thlabel{KhruschevsEstimateLemma}
    Let $E$, $M$ and $\rho$ be chosen as above, and let $H$ be given by \eqref{Hdef}. Let $\Omega$ be the domain described above. If $\omega_z$ is the harmonic measure of the domain $\Omega$ at any point $z \in \Omega$, then \[ \int_{\partial \Omega \cap \H} H(\Im t) \, d\omega_z(t) < \infty.\]
\end{lem}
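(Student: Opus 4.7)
The plan is to exhibit a positive harmonic function on $\H$ that dominates a constant multiple of $H(\Im t)$ on the slanted sides of the triangles $T_\ell$, and then use the Dirichlet representation of harmonic measure. The natural candidate is
\[ v(w) := \int_{[0,A]\setminus E} H(\dist{s}{E}) \, \Po(s, w) \, ds, \quad w \in \H, \]
which is positive and harmonic on $\H$ and finite at every $z \in \H$, the density being $L^1$ on $[0,A]\setminus E$ by \eqref{HdistConvInt}, which was built into the construction of $E$.

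The central computation is a pointwise lower bound for $v$ on the slanted sides. Consider $t = (a+y) + iy$ on the left slanted side of $T_\ell = T_{(a,b)}$, so that $0 < y \leq |\ell|/2$. Restricting the defining integral to $s \in (a, a+2y) \subset \ell$, the inequality $\dist{s}{E} \leq s-a$ together with the monotonicity of $H$ on $(0, \delta)$ yields $H(\dist{s}{E}) \geq H(s-a)$; combined with the elementary bound $\Po(s,t) \geq 1/(2\pi y)$ (valid for $|x-s| \leq y$, where $x = a+y$), this gives
\[ v(t) \geq \frac{1}{2\pi y} \int_0^{2y} H(u) \, du \geq \frac{1}{2\pi y} \cdot y H(y) = \frac{H(\Im t)}{2\pi}, \]
where the last inequality uses that $H$ is positive and decreasing. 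The right slanted side is symmetric, so $v(t) \geq H(\Im t)/(2\pi)$ on all slanted sides of every triangle $T_\ell$.

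With this estimate in hand, the integral against harmonic measure is controlled by truncation. Let $v_n$ be the Poisson integral of $\min(H(\dist{s}{E}), n) \mathbbm{1}_{[0,A]\setminus E}(s)$, a bounded positive harmonic function on $\H$, hence on $\Omega$, satisfying the Dirichlet identity $v_n(z) = \int_{\partial\Omega} v_n \, d\omega_z(t)$ by regularity of $\Omega$ for the Dirichlet problem. Restricting the integral to the slanted-sides part of $\partial\Omega \cap \H$ and letting $n \to \infty$ via monotone convergence on both sides (using $v(z) < \infty$) yields
\[ \int_{\text{slanted sides}} H(\Im t) \, d\omega_z(t) \leq 2\pi\, v(z) < \infty. \]

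It remains to handle the rest of $\partial\Omega \cap \H$, namely the top and two lateral sides of the rectangle $R$. On the top, $\Im t$ equals a positive constant, so $H(\Im t)$ is bounded and the contribution is trivially finite. On a lateral side of $R$, $\Im t \to 0$ at the bottom corner, which lies in $E$, so $H(\Im t) \to +\infty$; however, $\Omega$ has a right angle at that corner, and a standard corner estimate (via the local conformal map $z \mapsto z^2$) shows that $d\omega_z$ on the lateral side behaves like $y\, dy$ near the corner. The contribution is thus bounded by a constant multiple of $\int_0^{y_0} y H(y)\, dy$, which is finite since $H$ is integrable near zero. The main obstacle of the plan is arranging the pointwise bound $v \geq (2\pi)^{-1} H(\Im \cdot)$ uniformly across the countably many triangles $T_\ell$ of wildly varying scales; the matching of the Poisson kernel's scale to each triangle's geometry, combined with the monotonicity of $H$, is precisely what produces the uniform lower bound and makes the argument work for all $\ell$ at once.
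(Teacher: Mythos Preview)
Your argument is correct and takes a genuinely different route from the paper's. The paper estimates $\omega_z$ directly on each triangle: for the portion $A(s)$ of $\partial T_\ell$ sitting above $(a,a+s)$, it compares with the half-plane harmonic measure of $(a,a+s)$ to get $\omega_z(A(s)) \leq 2s/\Im z$, then runs a dyadic decomposition $A_n = A(|\ell|/2^n)$ and sums $\sum_n H(|\ell|/2^{n+1})\,\omega_z(A_n\setminus A_{n+1})$ to obtain $\int_{\partial T_\ell \cap \H} H(\Im t)\,d\omega_z \lesssim (\Im z)^{-1}\int_0^{|\ell|} H$, finally summing over $\ell$ via \eqref{HcompInvConvInt}. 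You instead package everything into a single positive harmonic majorant $v = \Po\big(H(\dist{\cdot}{E})\mathbbm{1}_{[0,A]\setminus E}\big)$ on $\H$, verify the uniform pointwise bound $v \geq (2\pi)^{-1}H(\Im\cdot)$ on all slanted sides simultaneously, and conclude via one application of the maximum principle. Your approach is cleaner in that it handles all scales at once without a dyadic sum; the paper's approach is more explicit about how $\omega_z$ decays near the tips of the triangles.

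Two small remarks. First, your ``Dirichlet identity $v_n(z)=\int_{\partial\Omega} v_n\,d\omega_z$'' is not quite justified by regularity alone, since $v_n$ does not extend continuously to $E\subset\partial\Omega$; what you actually need, and what suffices, is the inequality $\int_{\partial\Omega\cap\H} v_n\,d\omega_z \leq v_n(z)$, which follows immediately from positivity of $v_n$ and the superharmonic (Perron) characterization of harmonic measure. Second, your treatment of the lateral sides of $R$ via a corner estimate is fine by domain monotonicity ($\Omega$ sits inside a quarter-plane near each bottom corner), and in fact the paper's proof does not explicitly address these sides at all; the same dyadic trick the paper uses for the triangles, applied with comparison intervals $(0,s)\subset\R$, would cover them.
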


We emphasize that $\partial \Omega \cap \H$ equals $\partial \Omega \setminus \R$.

\begin{proof}
    The proof is very similar to the one given by Khrushchev in \cite{khrushchev1978problem}, only minor details differ. 
    If $\ell = (a,b)$ is one of the finite intervals complementary to $E$, and $T_\ell$ is the triangle standing on top of it, then we denote by $A(s)$ the part of the boundary of $T_\ell$ which lies above the interval $(a, a+s) \subset \R$, $0 < s < |\ell|/2$. If $u$ is the harmonic measure in $\H$ of the interval $(a, a+s)$, then it is easy to see from the explicit formula \[ u(x+iy) = \frac{1}{\pi}\int_a^{a+s} \frac{y}{(x-t)^2 + y^2}\, dt, \quad x+iy \in \H\] that $u(x+iy) \geq \frac{1}{2 \pi}$ for $x+iy \in A(s)$. Since $u$ is harmonic and continuous in the closure of $\Omega$ except possibly at the two points $a$ and $a+s$, the reproducing formula $\int_{\partial \Omega} u \, d\omega_z = u(z)$ holds, and so \[ \omega_z(A(s)) = \int_{A(s)} d\omega_z \leq \int_{\partial \Omega} 2\pi u \, d\omega_z = 2\pi u(z) \leq \frac{2 s}{ y}, \quad z = x+iy \in \H.\] We have used the positivity of $u$ and $\omega_z$ in the first inequality, and the second one is an easy consequence of the explicit formula for $u$ above. Set $A_1 := A(|\ell|/2)$, which is the left side of the boundary of $T_\ell$, and further set $A_n := A(|\ell|/2^n)$, $n \geq 1$. Then, since $H$ is decreasing, \begin{align*}
        \int_{\partial T_\ell \cap \H} H(\Im t) d\omega_z(t) &= 2 \int_{A_1} H(\Im t) \, d\omega_z(t) \\
        & \leq 2 \sum_{n = 1}^\infty \int_{A_n - A_{n+1}} H(|\ell|/2^{n+1}) \, d\omega_z(t) \\
        &\leq 2 \sum_{n = 1}^\infty H(|\ell|/2^{n+1})\frac{2|\ell|}{2^n y} \\
        &\leq \frac{16}{y} \int_0^{|\ell|} H(t)\, dt.
    \end{align*} Now the desired claim follows from \eqref{HcompInvConvInt}.
\end{proof}

\begin{figure}
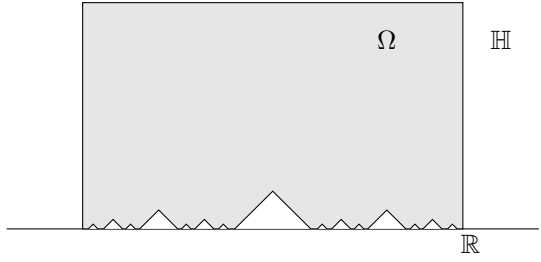

    \centering
    \includestandalone[scale=1]{OmegaDomain}
     \caption{The domain $\Omega$ in the proof of \thref{FourierCollapseLemma}. There is a triangular tent between $\Omega$ and each complementary interval of $E$, and $E$ lives on $\R$ inbetween the tents.}
    \label{fig:OmegaDomain}
\end{figure}

\begin{proof}[Proof of \thref{FourierCollapseLemma} ] 
Note that it is sufficient to establish the claim that the sequence $\{f_n\}_n$ contains a subsequence which converges pointwise in $\Omega$ to $0$. Indeed, the proof of \thref{growthestimateUpperHalfplane} shows that our assumption on the Fourier transforms $\widehat{f}$ implies pointwise boundedness of the sequence $\{f_n\}_n$ on each half-plane $\{ x+iy \in \H: y > \delta\}$, $\delta > 0$. Hence the sequence $\{f_n\}_n$ forms a normal family on $\H$. If we establish the above claim, then every subsequences of $\{f_n\}_n$ contains a further subsequence convergent to $0$ in $\H$. This is equivalent to convergence of the entire initial sequence $\{f_n\}_n$ to $0$.

Fix $z \in \Omega$. Since $\log|f_n|$ is a subharmonic function and $\max(-N, \log|f_n|)$ is a bounded continuous function on $\partial \Omega$, we obtain by the maximum principle for subharmonic functions that \[ \log |f_n(z)| \leq \int_ {\partial \Omega} \max(-N, \log |f_n(t)|) \, d\omega_z(t).\]  We let $N \to +\infty$ and, by the monotone convergence theorem, obtain \begin{align*}
    \log|f_n(z)| &\leq \int_{\partial \Omega} \log|f_n(t)| \, d\omega_z(t) \\  &= \int_{\partial \Omega \cap \H} \log|f_n(t)| \, d\omega_z(t) + \int_{E} \log|f_n(t)| \, d\omega_z(t)
\end{align*} The assumption in the lemma, \thref{growthestimateUpperHalfplane}, the definition of $H$ in \eqref{Hdef} and \thref{KhruschevsEstimateLemma} show that \[\int_{\partial \Omega \cap \H} \log|f_n(t)| \, d\omega_z(t) \leq \int_{\partial \Omega \cap \H} H(\Im t) d\omega_z(t) < A \] where $A$ is some positive constant which is independent of $n$. By Egorov's theorem, we may pass to a subsequence (the same subsequence for each $z \in \Omega)$ and assume that $f_n$ converge uniformly to $0$ on some subset $E'$ of $E$ which is of positive Lebesgue measure. On $E \setminus E'$ we have the estimate \[ \int_{E \setminus E'} \log|f_n(t)| \, d\omega_z(t) \leq \int_{E \setminus E'} |f_n(t)|^2 \, d\omega_z(t) \leq \frac{1}{\pi \Im z}\int_{E \setminus E'} |f_n(t)|^2 \, dx. \] The last inequality follows from monotonicity of the harmonic measure with respect to domains (see \cite[Corollary 4.3.9]{ransford1995potential}), which applied to $\Omega \subset \H$ leads to the inequality \[ \omega_z(B) \leq \int_B \Po(t,z) \, dt \leq \frac{|B|}{\pi \Im z}\]  for any Borel subset $B$ of $E$. Thus $d \omega_z \leq \frac{dx}{\pi \Im z}$, attesting the integral inequality above. By the convergence of $f_n$ to $0$ in the norm of $\L^2(E, \mathbbm{1}_E\, dx)$, the integrals $\int_{E \setminus E'} |f_n|^2 dx$ are uniformly bounded by some constant $C > 0$, and so the above inequalities give \[ \log |f_n(z)| \leq A + C + \int_{E'} \log |f_n(t)| d\omega_z(t).\] But $\omega_z(E') > 0$, since the harmonic measure and the arc-length measure on the rectifiable curve $\partial \Omega$ are mutualy absolutely continuous (see \cite[Theorem 1.2 of Chapter VI]{garnett2005harmonic}), so since $|f_n|$ converge uniformly to $0$ on $E'$, the integral on the right-hand side above converges to $-\infty$ as $n \to \infty$. Thus $|f_n(z)| \to 0$, and since $z \in \Omega$ was arbitrary, the desired claim follows.    
\end{proof}

The following proposition implies \thref{claim3} of Section \ref{StrategySection}, and so it also implies \thref{SparsenessTheorem}.

\begin{prop} \thlabel{FourierCollapseProposition}
Let $E$, $M$and $\rho$ be chosen as above. If a tuple of the form $(0,k) \in \L^2(E, \mathbbm{1}_E\, dx) \oplus \L^2(\R_+, \rho)$ is contained in the Hardy subspace $\hil(\mathbbm{1}_E, \rho)$, then $k \equiv 0$.
\end{prop}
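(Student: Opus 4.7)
The plan is to combine \thref{FourierCollapseLemma} with a duality argument against the Cauchy kernels $\psi_z$ from \thref{FourierTransformCauchyKernel}. Assume $(0,k) \in \hil(\mathbbm{1}_E, \rho)$. The density assertion of \thref{KernelContainmentHardySubspace} furnishes a sequence $\{f_n\}_n \subset \hil^1(\R) \cap \hil^2(\R)$, each $f_n$ extending analytically across $\R$, with $Jf_n \to (0,k)$ in the product norm. In particular, $f_n \to 0$ in $\L^2(E, \mathbbm{1}_E \, dx)$ and $\sup_n \int_{\R_+} |\widehat{f_n}|^2 \rho\,d\zeta < \infty$, so the hypotheses of \thref{FourierCollapseLemma} are met and we obtain $f_n(z) \to 0$ for every $z \in \H$.

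For each $z = x+iy \in \H$ I introduce the test tuple $(0, K_z)$ in the product Hilbert space, where
\[
K_z(\zeta) := \frac{\widehat{\psi_z}(\zeta)}{\rho(\zeta)} = \frac{e^{-i\conj{z}\zeta}}{\rho(\zeta)}, \quad \zeta > 0.
\]
This is a legitimate element of $\L^2(\R_+, \rho\,d\zeta)$ because
\[
\int_0^\infty |K_z(\zeta)|^2 \rho(\zeta)\, d\zeta = \int_0^\infty e^{M(\zeta) - 2y\zeta}\, d\zeta = I_M(y) < \infty
\]
by \thref{IMestimate}; this is precisely where the growth restriction $M \leq \sqrt{\cdot}$ is consumed. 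Pairing in the product Hilbert space and invoking Plancherel's theorem together with \thref{FourierTransformCauchyKernel} and the reproducing-kernel identity $f_n(z) = \int_\R f_n\, \conj{\psi_z}\, d\lambda$ yields
\[
\big\langle Jf_n, (0, K_z) \big\rangle = \int_{\R_+} \widehat{f_n}(\zeta)\, e^{iz\zeta}\, d\zeta = \sqrt{2\pi}\, f_n(z).
\]

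Passing to the limit, norm convergence of $Jf_n$ sends the left-hand side to $\big\langle (0,k), (0,K_z) \big\rangle = \int_0^\infty k(\zeta)\, e^{iz\zeta}\, d\zeta$, while the right-hand side tends to $0$ by \thref{FourierCollapseLemma}. Hence the analytic function $F(z) := \int_0^\infty k(\zeta)\, e^{iz\zeta}\, d\zeta$ vanishes identically on $\H$. Specializing to the positive imaginary axis $z = iy$ gives $\int_0^\infty k(\zeta)\, e^{-y\zeta}\, d\zeta = 0$ for every $y > 0$; the integrand lies in $\L^1(\R_+)$ by Cauchy--Schwarz (using $k\sqrt{\rho} \in \L^2(\R_+,d\zeta)$ and $e^{-y\zeta}/\sqrt{\rho} \in \L^2(\R_+,d\zeta)$), so uniqueness of the Laplace transform forces $k \equiv 0$.

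The heavy analytic lifting has already been carried out inside \thref{FourierCollapseLemma} via Khrushchev's harmonic-measure estimate; what remains is essentially bookkeeping. The one nontrivial point is choosing the test tuple $K_z$ so that two things happen simultaneously: (i) the weighted square integrability of $K_z$ is controlled by the integral $I_M(y)$ of \thref{IMestimate}, which is where the hypothesis on $M$ enters decisively, and (ii) the conjugations align to turn the product-space pairing into a plain pointwise evaluation of the analytic extension of $f_n$ at $z$.
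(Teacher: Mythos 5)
Your proof is correct, and the closing step genuinely differs from the paper's. Both arguments agree on the essential point: use \thref{KernelContainmentHardySubspace} to produce the approximating sequence $\{f_n\}$, verify the hypotheses of \thref{FourierCollapseLemma}, and conclude $f_n(z)\to 0$ pointwise on $\H$. Where they diverge is in converting that pointwise collapse into $k\equiv 0$. The paper interprets $k$ as a distribution on $\R_+$ and aims to show $\int_{\R_+}k\phi\,d\lambda=0$ for every smooth $\phi$ compactly supported in $\R_+$; to do so it pushes $\phi$ through the formula of \thref{H1FourierTransformFormula}, applies Fubini to produce the absolutely integrable kernel $D(x)$ (the Fourier transform of $\phi(\zeta)e^{y\zeta}$), and only then invokes dominated convergence. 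The paper explicitly flags that a naive passage to the limit inside the $\widehat{f_n}(\zeta)$ formula would not be justified, which is why this detour through test functions is needed. Your argument avoids that delicacy entirely: you pair $Jf_n$ against the fixed tuple $(0,K_z)$ with $K_z=\widehat{\psi_z}/\rho$, check $K_z\in\L^2(\R_+,\rho\,dx)$ via $\int_0^\infty |K_z|^2\rho\,d\zeta=I_M(y)<\infty$, identify the pairing with $\sqrt{2\pi}\,f_n(z)$ via Plancherel and the Cauchy integral representation, and then norm continuity of the inner product hands you $\int_0^\infty k(\zeta)e^{iz\zeta}\,d\zeta=0$ directly. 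Specializing to $z=iy$ and invoking Laplace transform uniqueness (with the integrability secured by Cauchy--Schwarz, as you note) finishes the job. This is a slicker route: the convergence question that the paper works around is resolved for free by Hilbert-space duality. One small misattribution: the mere finiteness of $I_M(y)$ for every $y>0$ does not need \thref{IMestimate}; it follows from the standing bound $M(x)\le\sqrt{x}$ in \eqref{MsqrtXest}. \thref{IMestimate} supplies the quantitative growth rate as $y\to 0^+$, which matters in \thref{growthestimateUpperHalfplane} and in the choice of the Cantor set $E$, but here only qualitative finiteness is used.
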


\begin{proof}
    By the containment $(0,k) \in \hil(\mathbbm{1}_E, \rho)$ and \thref{KernelContainmentHardySubspace}, there exists a sequence $\{f_n\}_n$ of functions in $\hil^1(\R) \cap \hil^2(\R)$ extending analytically across $\R$ and for which the tuples $Jf_n = (f_n, \widehat{f}_n)$ converge in the norm of the product space $\L^2(\R, \mathbbm{1}_E\, dx) \oplus \L^2(\R_+, \rho \, dx)$ to $(0,k)$. By passing to a subsequence, we may assume that the Fourier transforms $\widehat{f_n}$ converge pointwise almost everywhere on $\R_+$ to $k$. One might attempt to prove the proposition by using the formula in \thref{H1FourierTransformFormula}, and observing that \[ k(\zeta) = \lim_{n \to \infty} \widehat{f_n}(\zeta) = \lim_{n \to \infty} e^{y\zeta} \int_{\R} f_n(x+iy) e^{-ix\zeta} \, d\lambda(x)\] holds for almost every $\zeta \in \R_+$ and for any $y > 0$. By \thref{FourierCollapseLemma} the integrand converges pointwise to $0$. However, an appeal to the usual convergence theorems for integrals is not justified, and we have to proceed more carefully. Note that since $k \in \L^2(\R_+, \rho \, dx)$ and $\rho$ is bounded from below on compact subsets of $\R_+$, in fact $k$ is locally integrable on $\R$. It follows that we can interpret $k$ as a distribution on $\R_+$. Thus to show that $k \equiv 0$, it suffices to establish that $\int_{\R_+} k \phi \,d\lambda = 0$ for every smooth function $\phi$ which is compactly supported in $\R_+$. 
    
    Let $\phi$ be as above. Since we have that $\widehat{f_n} \to k$ in $\L^2(\R_+, \rho \, dx)$ and $\rho$ is bounded from below on compact subsets of $\R_+$, we obtain \begin{equation}
        \label{kDistConv} \int_{\R_+} k\phi \,d\lambda = \lim_{n \to \infty} \int_{\R_+} \widehat{f_n} \phi \, d\lambda.
    \end{equation} Fix some small $y > 0$. By \thref{H1FourierTransformFormula}, we get that \[  \widehat{f_n}(\zeta) = e^{y\zeta} \int_{\R} f_n(x+iy) e^{-ix\zeta} \, d\lambda(x).\] Plugging this formula into \eqref{kDistConv} and noting that the use of Fubini's theorem is permitted, we obtain 
    \begin{align}
        \int_{\R_+} k\phi \,d\lambda &= \lim_{n \to \infty} \int_\R \Big(\int_{\R_+} \phi(\zeta)e^{y \zeta} e^{-ix\zeta} d\lambda(\zeta) \Big) f_n(x+iy) \, d\lambda(x) \nonumber \\
        & = \lim_{n \to \infty} \int_\R D(x) f_n(x+iy) \, d\lambda(x) \label{kphiIntegral}
    \end{align} where \[ D(x) := \int_{\R_+} \phi(\zeta)e^{y \zeta} e^{-ix\zeta} d\lambda(\zeta)\]  is the Fourier transform of the compactly supported smooth function $\zeta \mapsto \phi(\zeta)e^{y\zeta}$. As such, $D$ is certainly integrable on $\R$. By \thref{FourierCollapseLemma}, we have $\lim_{n \to \infty} f_n(x+iy) = 0$, and \[ \sup_{n} \, \sup_{x \in \R} |f_n(x+iy)| < \infty\] holds by \thref{growthestimateUpperHalfplane}. Therefore, this time, the dominated convergence theorem applies to \eqref{kphiIntegral}, and we conclude that \[ \int_{\R_+} k\phi \, d\lambda = 0.\] Thus $k$ is the zero distribution on $\R_+$, and therefore $k \equiv 0$.
\end{proof}

\section{Clumping for tempered distributions}

In this last section, we indicate how one can derive \thref{DistributionalClumpingTheorem} from \thref{CondensationTheorem}. We will skip most of the details of the necessary computations, which are in any case standard.

Let $f$ be a function which satisfies \begin{equation} \label{fTempGrowthEq} \int_\R \frac{|f(x)|}{(1+|x|)^n} \, dx < \infty \end{equation} for some positive integer $n$. Then $f$ can be interpreted as a tempered distribution on $\R$ in the usual way, and so $f$ has a distributional Fourier transform $\widehat{f}$. Our hypothesis is that $\widehat{f}$ is an integrable function on some half-axis $[\zeta_0, \infty)$ and that \begin{equation}
    \label{hatfDecayS7}\rho_{\widehat{f}}(\zeta) = \mathcal{O}\big( e^{-c\sqrt{\zeta}}\big), \quad \zeta > \zeta_0.
\end{equation} We may assume that $\zeta_0 = 0$. In order to prove \thref{DistributionalClumpingTheorem}, we will construct an appropriate multiplier $m: \R \to \mathbb{C}$ with the property that $mf$ is a function to which \thref{CondensationTheorem} applies. In particular, the following properties will be satisfied by $m$:

\begin{enumerate}[\itshape (i)]
    \item $m(x)$ is a bounded function of $x \in \R$ which is non-zero for almost every $x \in \R$.
    \item $mf \in \L^2(\R, dx)$,
    \item $\rho_{\widehat{mf}}(\zeta) = \mathcal{O}\big(e^{-c\sqrt{\zeta}}\big)$ for some $c > 0$ and $\zeta > 0$,
    \item $\int_I \log |m| \, dx > - \infty$ for every interval $I \subset \R$.
\end{enumerate}

If we construct such a multiplier $m$, then $(ii)$, $(iii)$ and \thref{CondensationTheorem} imply that $\log |m f|$ is locally integrable on an open set $U$ which coincides, up to a set of measure zero, with $\{ x \in \R : |f(x)m(x)| > 0 \}$. By $(i)$, $U$ differs from $\{ x \in \R : |f(x)| > 0 \}$ at most by a set of measure zero. Moreover, the formula $\log |f| = \log |f m| - \log |m|$ and $(iv)$ show that $\log |f|$ is locally integrable on $U$. This proves \thref{DistributionalClumpingTheorem}, as a consequence of existence of a multiplier satisfying the above conditions. We now show how to construct such a multiplier.

We set \[\Phi(x) := \frac{(-i)^n n!}{\sqrt{2\pi} (x-i)^n}\] and let $h$ be defined by the equation \eqref{hOuterDef}, with \[\log |h(x)| = \log \min (1, |f(x)|^{-1}), \quad x \in \R. \] The condition \eqref{fTempGrowthEq} ensures that $h$ is well-defined, and it is a member of $\hil^\infty(\R)$. We put \[ h_*(x) := \frac{h(x)}{(x+i)^2}, \quad x \in \R. \] and finally \[ m(x) := \Phi(x) \overline{h_*(x)}, \quad x \in \R. \] Clearly, $m$ is bounded. Since $|f|$ is locally integrable on $\R$, the set $\{ x \in \R : |f| = \infty \}$ has measure zero. Consequently, $|h_*| > 0$ almost everywhere in $\R$, and so the desired property $(i)$ of $m$ holds. The choice of $h_*$ and $\Phi$ ensures that $mf$ is both bounded and integrable on $\R$, implying $mf \in \L^2(\R, dx)$, so that $(ii)$ above holds. Property $(iv)$ holds by \thref{HadyClassLogIntProp}, since $h_* \in \H^1(\R)$. So the critical property left to be verified is the spectral estimate of $mf$ in $(iii)$ above.

\begin{lem} 
    With notation and definitions as above, the Fourier transform $\widehat{mf}$ satisfies \[ \rho_{\widehat{mf}}(\zeta) = \mathcal{O}\big( e^{-c\sqrt{\zeta}} \big), \quad \zeta > 0\] for some $c > 0$.
\end{lem}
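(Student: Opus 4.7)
The plan is to split $f$ into its positive- and negative-frequency parts. Since $\widehat{f}|_{[0,\infty)} \in \L^1$ by hypothesis,
\[ g(x) := \int_0^\infty \widehat{f}(\zeta) e^{ix\zeta}\, d\lambda(\zeta) \]
is a bounded continuous function belonging to $\hil^\infty(\R)$ with distributional Fourier transform $\widehat{f}\mathbbm{1}_{[0,\infty)}$, so that $f - g$ is a tempered distribution whose Fourier transform is supported in $(-\infty, 0]$. I would also precompute $\widehat{m}$: a direct calculation (essentially differentiating the identity for $\widehat{\overline{\psi_i}}$ from \thref{FourierTransformCauchyKernel} a total of $n-1$ times) gives
\[ \widehat{\Phi}(\zeta) = (-1)^{n-1} n\, \zeta^{n-1} e^{\zeta} \mathbbm{1}_{(-\infty,0]}(\zeta) \in \L^1(\R), \]
and since $h_* \in \hil^1(\R)$ the transform $\widehat{\overline{h_*}}$ is bounded and supported in $(-\infty, 0]$. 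Consequently $\widehat{m} = \widehat{\Phi} \ast \widehat{\overline{h_*}}$ is bounded, supported in $(-\infty, 0]$, and satisfies $\|\widehat{m}\|_\infty \leq \|\widehat{\Phi}\|_1 \|\widehat{\overline{h_*}}\|_\infty < \infty$. Without loss of generality I may take $n \geq 2$, ensuring also $m \in \L^1(\R)$.

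The bound on $\widehat{mg}$ then follows from an elementary Fubini computation. Because $m \in \L^1$ and $\widehat{f}|_{[0,\infty)} \in \L^1$, for $\zeta > 0$,
\[ \widehat{mg}(\zeta) = \int_0^\infty \widehat{f}(t)\, \widehat{m}(\zeta - t)\, d\lambda(t) = \int_\zeta^\infty \widehat{f}(t)\, \widehat{m}(\zeta - t)\, d\lambda(t), \]
the last equality using the support of $\widehat{m}$. Bounding $|\widehat{m}|$ by its sup-norm gives $|\widehat{mg}(\zeta)| \leq \|\widehat{m}\|_\infty \rho_{\widehat{f}}(\zeta)/\sqrt{2\pi} = \mathcal{O}(e^{-c\sqrt{\zeta}})$.

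The hard part is showing that the contribution from $f - g$ vanishes on the positive axis, i.e.\ $\widehat{mf}(\zeta) = \widehat{mg}(\zeta)$ for $\zeta > 0$. Heuristically this is because both $m$ and $f - g$ have Fourier transforms supported in $(-\infty, 0]$, so their product should too; but rigorously this needs care, since $\widehat{m}$ is not in $\L^1$ and $f - g$ is merely a tempered distribution. My plan is to exploit the fact that $m = \Phi \overline{h_*}$ admits an analytic extension into the lower half-plane ($\Phi$ is analytic there, and $\overline{h_*}$ extends via $z \mapsto \overline{h_*(\overline{z})}$). Setting $m_\epsilon(x) := m(x - i\epsilon)$, one checks with Cauchy's estimates that $m_\epsilon$ is smooth with all derivatives of polynomial decay, that $\widehat{m_\epsilon}(\zeta) = e^{\epsilon\zeta}\widehat{m}(\zeta)$ belongs to $\L^1$ and is supported in $(-\infty, 0]$, and that $|m_\epsilon(x)| \leq C/(1+x^2)^{(n+2)/2}$ uniformly in $\epsilon$. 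Being a Schwartz multiplier, $m_\epsilon$ acts on tempered distributions, and the convolution identity $\widehat{m_\epsilon(f-g)} = \widehat{m_\epsilon} \ast \widehat{f-g}$ forces the left side to be supported in $(-\infty, 0] + (-\infty, 0] \subseteq (-\infty, 0]$; since $m_\epsilon(f-g) \in \L^1$ the Fourier transform is continuous, hence vanishes pointwise on $(0, \infty)$. Dominated convergence yields $m_\epsilon f \to mf$ and $m_\epsilon g \to mg$ in $\L^1$, so $\widehat{m_\epsilon f} \to \widehat{mf}$ and $\widehat{m_\epsilon g} \to \widehat{mg}$ uniformly, and we conclude $\widehat{mf}(\zeta) = \widehat{mg}(\zeta) = \mathcal{O}(e^{-c\sqrt{\zeta}})$ for $\zeta > 0$. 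A final substitution $u = \sqrt{t}$ in $\rho_{\widehat{mf}}(\zeta) = \int_\zeta^\infty |\widehat{mf}(t)|\, dt$ converts the pointwise bound into $\mathcal{O}(e^{-c'\sqrt{\zeta}})$ for any $c' < c$.
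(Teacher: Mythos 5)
Your argument is correct in substance, and it reaches the right conclusion, but it takes a genuinely different route from the paper. You decompose $f = g + (f-g)$ where $g$ is the positive-frequency part, compute $\widehat{mg}$ directly as the convolution $\widehat{m} * \widehat{g}$ (with $\widehat{m} = \widehat{\Phi} * \widehat{\overline{h_*}}$ formed in one shot), and then argue that $\widehat{m(f-g)}$ vanishes on $\R_+$ by an $m_\epsilon$-regularization. The paper instead works directly with the distribution $f\Phi$: it establishes, by pairing against Schwartz test functions $s$ with $\widehat{s}$ compactly supported in $(0,\infty)$, that $\widehat{f\Phi}$ coincides on $\R_+$ with the function $\widehat{f} * \widehat{\Phi}$; it then bounds that convolution via a dyadic decomposition; and finally it convolves with $\widehat{\overline{h_*}}$, which is bounded and supported on $\R_-$. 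Your $\widehat{mg}$ step is in fact cleaner — the kernel $|\zeta - x|^n e^{\zeta - x}$ is bounded on $\{x \geq \zeta\}$, so the paper's dyadic sum is arguably overkill, and your one-line estimate $|\widehat{mg}(\zeta)| \leq \|\widehat{m}\|_\infty \rho_{\widehat{f}}(\zeta)/\sqrt{2\pi}$ is the right level of effort.

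The one place where you are gliding over a genuine point of rigor is the assertion ``the convolution identity $\widehat{m_\epsilon(f-g)} = \widehat{m_\epsilon} * \widehat{f-g}$ forces the left side to be supported in $(-\infty, 0]$.'' As written, this is not a well-posed identity: $m_\epsilon$ is smooth with only polynomial decay, hence not a Schwartz function, so the convolution of $\widehat{m_\epsilon}$ (an $\L^1$ function) with the tempered distribution $\widehat{f-g}$ has no a priori meaning, and the convolution theorem does not apply off the shelf. What does work — and this is exactly the test-function argument the paper runs on $f\Phi$ — is to show that for every Schwartz $s$ with $\widehat{s}$ compactly supported in $(0,\infty)$ one has $\int \widehat{m_\epsilon(f-g)}\,\overline{\widehat{s}}\,d\lambda = \langle \widehat{f-g}, \overline{\widehat{\overline{m_\epsilon}} * \widehat{s}}\rangle = 0$, because $\widehat{\overline{m_\epsilon}} * \widehat{s}$ is supported in $[a,\infty)$ while $\widehat{f-g}$ is supported in $(-\infty,0]$; since $m_\epsilon(f-g) \in \L^1$ its Fourier transform is continuous, so it vanishes pointwise on $(0,\infty)$. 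So the two proofs ultimately rely on the same distributional pairing lemma, just applied to different objects, and your route is somewhat longer because of the extra $g$- and $m_\epsilon$-bookkeeping it carries. With that one step spelled out as a pairing, your argument is complete.
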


\begin{proof} A standard argument shows that $\widehat{f \Phi}$ must coincide on $\R_+$ with the convolution $\widehat{f} \ast \widehat{\Phi}$ (which, note, is a function on $\R_+$). Indeed, let $s$ be a Schwartz function which has a Fourier transform $\widehat{s}$ supported on some compact interval $[a,b]$, $0 < a < b$. Note that the function $\Phi s$ is also of Schwartz class. It follows immediately from the integral definition of the Fourier transform \eqref{FourierTransformDef} that $\widehat{\overline{\Phi}s} = \widehat{\conj{\Phi}} \ast \widehat{s}$, and that $\widehat{\overline{\Phi}s}$ is supported on the interval $[a, \infty)$. Hence, by the definition of the distributional Fourier transform, we obtain \[ \int_{\R_+} \widehat{f \Phi} \, \overline{\widehat{s}}\, d\lambda = \int_\R f\Phi \overline{s} \, d\lambda = \int_\R f \overline{ \overline{\Phi} s} \, d\lambda = \int_{\R_+} \widehat{f} \overline{(\widehat{\overline{\Phi}} \ast \widehat{s})} \, d\lambda.\] Fubini's theorem and the computational rule $\widehat{\conj{\Phi}}(x) = \conj{\widehat{\Phi}}(-x)$ shows that the last integral above equals \[\int_{\R_+} (\widehat{f} \ast \widehat{\Phi})  \, \overline{\widehat{s}} \, d\lambda,\] proving our claim about the structure of $\widehat{f \Phi}$ on $\R_+$. 

Hence $\widehat{f\Phi}$ is a bounded continuous function which coincides with \[ \widehat{f \Phi}(\zeta) = \int_{\R} \widehat{f}(x) \widehat{\Phi}(\zeta - x) \, d\lambda(x) \] for $\zeta > 0$. By a computation similar to the one in the proof of \thref{FourierTransformCauchyKernel} one sees that $\Phi$ has the Fourier transform \[ \widehat{\Phi}(\zeta) = |\zeta|^n e^{\zeta} \mathbbm{1}_{\R_-}(\zeta).\] For such $\zeta$, we estimate \begin{align*}
    \big\vert \widehat{f \Phi}(\zeta) \big\vert & \leq \int_{\R} |\widehat{f}(x)| |\zeta - x|^n e^{\zeta - x} \mathbbm{1}_{\R_-}(\zeta - x) \, d\lambda(x) \\
    &= \int_\zeta^\infty |\widehat{f}(x)| |\zeta - x|^n e^{\zeta - x} \, d\lambda(x) \\ & = \sum_{k=0}^\infty \int_{\zeta 2^k}^{\zeta 2^{k+1}} |\widehat{f}(x)| |\zeta - x|^n e^{\zeta - x} \, d\lambda(x).
\end{align*} We now make the rather rough estimate \[ |\zeta - x|^n e^{\zeta - x} \leq \zeta^{2(k+1)n}, \quad x \in [\zeta 2^k, \zeta 2^{k+1}],\] which gives \[\big\vert \widehat{f \Phi}(\zeta) \big\vert \leq \sum_{k=0}^\infty \rho_{\widehat{f}}(\zeta 2^k) \zeta^{2(k+1)n} \leq A \sum_{k=0}^\infty e^{-c \sqrt{\zeta} \sqrt{2}^k} \zeta^{2(k+1)n}\] for some $A > 0$. The above sum can be readily estimated to be of order $\mathcal{O}\big( e^{-d \sqrt{\zeta}})$ for some $d > 0$ slightly smaller than $c$. Since $fm$ is the product of two integrable functions $f\Phi$ and $\overline{h_*}$, we have \[\widehat{fm} = \widehat{f\Phi} \ast \widehat{\overline{h_*}}, \] where $\widehat{\overline{h_*}}(\zeta) = \overline{\widehat{h_*}(-\zeta)}$ is non-zero only for $\zeta < 0$. Note that $h_*$ is integrable on $\R$, and so $\widehat{h_*}$ is bounded. We obtain 
    \begin{align*}
        |\widehat{fm}(\zeta)| &\leq \int_\R \Big\vert \widehat{f\Phi}(x) \overline{\widehat{h_*}(x-\zeta)}\Big\vert \, d\lambda(x) \\ &\leq B  \int_\zeta^\infty e^{-d\sqrt{x}} \, d\lambda(x) = \mathcal{O}\big( e^{-d  \sqrt{\zeta}}\big),    
    \end{align*} where $B$ is some positive constant. The desired estimate on $\rho_{\widehat{fm}}$ follows readily from this estimate.
\end{proof}
By the above discussion, we have proved \thref{DistributionalClumpingTheorem}.

\bibliographystyle{plain}
\bibliography{mybib}

\end{document}